\theoremstyle{plain}
\newtheorem{theorem}{Theorem}[section]
\newtheorem{lemma}[theorem]{Lemma}
\newtheorem{proposition}[theorem]{Proposition}
\newtheorem{corollary}[theorem]{Corollary}
\theoremstyle{definition}
\newtheorem{definition}[theorem]{Definition}
\newtheorem{remark}[theorem]{Remark}
\numberwithin{equation}{section}
\newcommand\fantome[1]{}
\DeclareMathOperator{\ev}{ev}
\DeclareMathOperator{\sgn}{sgn}
\DeclareMathOperator{\Drin}{Drin}
\DeclareMathOperator{\Sht}{Sht}
\begin{document}

\title{Special functions and twisted $L$-series}

\author{Bruno Angl\`es  \and Tuan Ngo Dac \and Floric Tavares Ribeiro}

\address{
Normandie Universit\'e
Universit\'e de Caen Normandie,
Laboratoire de Math\'ematiques Nicolas Oresme,
CNRS UMR 6139,
Campus II, Boulevard Mar\'echal Juin,
B.P. 5186,
14032 Caen Cedex, France.
}
\email{bruno.angles@unicaen.fr,tuan.ngodac@unicaen.fr,floric.tavares-ribeiro@unicaen.fr}


\begin{abstract} We introduce a generalization of the Anderson-Thakur special function, and we prove a rationality result for several variable  twisted $L$-series associated to shtuka functions.
\end{abstract}

\date{  \today}

\maketitle

\footnotetext[1]{The second author was partially supported by ANR Grant PerCoLaTor ANR-14-CE25-0002.}

\tableofcontents

\section{Introduction}

Let $X=\mathbb P^1/\mathbb F_q$ be the projective line over a finite field $\mathbb F_q$ having $q$ elements and let $K$ be its function field. Let $\infty$ be a closed point of $X$ of degree $d_\infty=1.$  Then $K=\mathbb  F_q(\theta)$ for some $\theta \in K$ such that $\theta$ has a pole of order one at $\infty.$ We set $A=\mathbb  F_q[\theta].$ Following Anderson (\cite{AND}, see also \cite{THA2}), we consider:
$$Y= K\otimes_{\mathbb F_q}X.$$
Let $\mathbb K={\rm Frac}(K\otimes_{\mathbb F_q}K)$ be the function field of $Y.$ We identify $K$ with $K\otimes 1\subset \mathbb K.$ If we set $t=1\otimes \theta,$ then $\mathbb  K=K(t).$  Let $\tau : \mathbb K\rightarrow \mathbb K$ be the homomorphism of $\mathbb F_q(t)$-algebras such that:
$$\forall x\in K, \quad \tau(x)=x^q.$$
Let $\bar{\infty}\in Y(K)$ be the pole of $t,$ and let $\xi \in Y(K)$ be the point corresponding to the kernel of the homomorphism of $K$-algebras $K\otimes_{\mathbb F_q}K\rightarrow K$ which sends $t$ to $\theta.$ Then the divisor of $f:=t-\theta$ is equal to $(\xi)-(\bar{\infty}).$ The function $t-\theta$ is  a shtuka function, and in particular:
$$\forall a\in A, \quad a(t)=\sum_{k=0}^{\deg_\theta a} C_{a,i} f\cdots f^{(i-1)}, \text{ with } C_{a,i} \in A.$$
The map $C: A\rightarrow A\{\tau\}, a\mapsto C_a:= \sum_{k=0}^{\deg_\theta a} C_{a,i}\tau ^i$  is a homomorphism of $\mathbb F_q$-algebras called the Carlitz module. Note that:
$$C_\theta= \theta+\tau.$$
There exists a unique element $\exp_C \in K\{\{\tau\}\}$ such that $\exp_C\equiv 1\pmod{\tau} $ and:
$$ \forall a\in A, \quad \exp_C a= C_a \exp_C.$$

Let $\mathbb C_\infty$ be the completion of a fixed algebraic closure of $K_\infty :=\mathbb F_q((\frac{1}{\theta})).$ Then $\exp_C$ defines an entire function on $\mathbb C_\infty,$ and:
$${\rm Ker }\exp_C =\widetilde{\pi} A,$$
for some $\widetilde{\pi} \in \mathbb C_\infty^\times$ (well-defined modulo $\mathbb F_q^\times$) called the Carlitz period. We consider $\mathbb T$ the Tate algebra in the variable $t$ with coefficients in $\mathbb C_\infty,$ i.e. $\mathbb T:=\mathbb C_\infty\widehat{\otimes}_{\mathbb F_q} A.$ Let $\tau: \mathbb T\rightarrow \mathbb T$ be the continuous homomorphism of $\mathbb F_q[t]$-algebras such that $\forall x\in \mathbb C_\infty, \tau(x)=x^q.$ Anderson and Thakur (\cite{AND&THA}) showed that:
$$\{ x\in \mathbb T, \tau(x)=fx\} =\omega \mathbb F_q[t],$$
where $\omega \in \mathbb T^\times$ is such that:
$$f\omega\mid_{\xi}=\widetilde{\pi}.$$
The function $\omega$ is called the Anderson-Thakur special function attached to the Carlitz module $C.$ This function is intimately connected to Thakur-Gauss sums  (\cite{ANG&PEL}).

\medskip

In 2012, Pellarin (\cite{PEL})  initiated  the study of a twist of the Carlitz module by the shtuka function $f.$ Let's consider the following homomorphism of $\mathbb F_q$-algebras $\varphi: A\rightarrow A[t]\{\tau\}, \theta\mapsto \theta+f\tau.$ Then, one observes that $C$ and $\varphi$ are isomorphic over $\mathbb T,$ i.e. we have the following equality in $\mathbb T\{\tau\}:$
$$\forall a\in A, \quad C_a\omega = \omega \varphi_a.$$
To such an object, one can associate the special value of some twisted $L$-function (see \cite{APT}):
$$\mathcal L= \sum_{a\in A,a\,  {\rm monic}} \frac{a(t)}{a} \in \mathbb T^\times.$$
Then, using the Anderson log-algebraicity Theorem for the Carlitz module (\cite{AND2}, see also \cite{PAP}, \cite{APTR}), Pellarin proved the following remarkable rationality result:
$$\frac{\mathcal L \omega}{\widetilde{\pi}}= \frac{1}{f}\in \mathbb K.$$
This result has been extended to the case of ``several variables'' (\cite{APT}, \cite{DEM}) using methods developed by Taelman (\cite{TAE1}, \cite{TAE2}, \cite{AT},  \cite{FAN1}, \cite{FAN2}, \cite{FAN0}). This kind of rationality results leads to new advances in the arithmetic of function fields (see \cite{APT}, \cite{ATR}, \cite{ANDTR3}).

\medskip

The aim of this paper is to extend the previous results to the general context, i.e. for any smooth projective geometrically irreducible curve $X/\mathbb F_q$ of genus $g$ and any closed point $\infty$ of degree $d_\infty$ of $X.$ In particular, we obtain a rationality result similar to that of Pellarin (Theorem \ref{TheoremS3-1}). Our result involves twisted $L$-series (see \cite{ANDTR2}) and a generalization of the Anderson-Thakur special function. The involved techniques are based on ideas developed in \cite{ANDTR3} where an analogue of Stark Conjectures is proved for sign-normalized rank one Drinfeld modules.

We should mention that Green and Papanikolas (\cite{GRE&PAP}) have recently studied the particular case $g=1$ and $d_\infty=1$ and, in this case, they have obtained explicit formulas similar to that obtained by Pellarin (in the case $g=0$ and $d_\infty=1$).

\medskip

The first  author would like to thank David Goss and Federico Pellarin for interesting discussions around the topics considered in  this article. The authors  dedicate this work to David Goss.


\section{Notation and background}

\subsection{Notation}${}$\par

Let $X/\mathbb F_q$ be a smooth projective geometrically irreducible curve of genus $g$, and $\infty$ be a closed point of degree $d_\infty$ of $X.$ Denote by $K$ the function field of $X,$ and by $A$ the ring of elements of $K$ which are regular outside $\infty.$ The  completion $K_\infty$ of $K$ at the place $\infty$ has residue field $\mathbb F_\infty.$ We fix an algebraic closure $\overline{K}_\infty$ of $K_\infty$ and denote by $\mathbb C_\infty$ the completion of $\overline{K}_\infty$. 

We will fix a sign function $\sgn: K_\infty^\times \rightarrow \mathbb F_\infty^\times$ which is a group homomorphism  such that $\sgn\mid_{\mathbb F_\infty^\times}={\rm Id}\mid_{\mathbb F_\infty^\times}.$ We fix  $\pi \in K\cap {\rm Ker}(\sgn)$ and  such that $K_\infty =\mathbb F_\infty((\pi)).$ Let $v_\infty:\mathbb C_\infty \rightarrow \mathbb  Q\cup\{+\infty\}$ be the valuation on $\mathbb C_\infty$ normalized such that $v_\infty(\pi)=1.$  Observe that:
$$\forall x\in K^\times, \quad \deg (xA)= -d_\infty v_\infty(x).$$
Let $\overline{K}$ be the algebraic closure of $K$ in $\mathbb C_\infty.$

\medskip

Let $\mathcal I(A)$ be the group of non-zero fractional ideals of $A.$ We have a natural surjective group homomorphism $\deg :\mathcal I(A)\rightarrow \mathbb Z,$ such that for $I \in \mathcal I(A), I\subset A$, we have:
$$\deg I={\rm dim}_{\mathbb F_q} A/I.$$
Let $\mathcal P(A)= \{ xA, x\in K^\times\},$ then ${\rm Pic}(A)= \frac{\mathcal I(A)}{\mathcal P(A)}$ is a finite abelian group.

\medskip

Let $I_K$ be the group of id\`eles of $K,$ and $H/K$ be the finite abelian extension of $K,$ $H\subset \mathbb C_\infty,$  corresponding via class field theory to the following subgroup of $I_K:$
$$K^\times \, \ker \sgn \, \prod_{v \neq \infty} O_v^\times ,$$
where for a place $v \neq \infty$ of $K,$ $O_v^\times$ denotes the group of units of the $v$-adic completion of $K.$ Then $H/K$ is a finite extension of degree $\mid {\rm Pic}(A)\mid\frac{q^{d_\infty}-1}{q-1},$ unramified outside $\infty,$ and the decomposition group of $\infty$ in $H/K$ is equal to its inertia group and is isomorphic to $\frac{\mathbb F_\infty^{\times}}{\mathbb F_q^{\times}}.$  Set $G={\rm Gal}(H/K).$ If we define $\mathcal P_+(A)=\{ xA, x\in K^\times, \sgn(x)=1\},$ then the Artin map 
$$(\cdot,H/K):\mathcal I(A) \longrightarrow G.$$
induces a group isomorphism:
$$\frac{\mathcal I(A)}{\mathcal P_+(A)}\simeq G.$$
For $I \in \mathcal I(A),$ we set:
$$\sigma_I=(I,H/K) \in G.$$

Let $H_A$ be the Hilbert class field of $A,$ i.e. $H_A/K$ corresponds to the following subgroup of the id\`eles of $K:$
$$K^\times K_\infty^\times\prod_{v\not =\infty} O_v^\times.$$
Then $H/H_A$ is totally ramified at the places of $H_A$ above $\infty.$ Furthermore:
 $${\rm Gal}(H/H_A)\simeq \frac{\mathbb F_\infty^\times}{\mathbb F_q^\times}.$$
We denote by $B$ the integral closure of $A$ in $H$ and $B'$ the integral closure of $A$ in $H_A.$ Observe that $\mathbb F_\infty \subset B.$

\subsection{Sign-normalized rank one Drinfeld modules}${}$\par

We define the map $\tau: \mathbb C_\infty \rightarrow \mathbb C_\infty, x\mapsto x^q.$  By definition, a sign-normalized rank one Drinfeld module is a homomorphism of $\mathbb F_q$-algebras $\phi:A\rightarrow \mathbb C_\infty \{\tau\}$ such that there exists $n(\phi) \in \{0, \cdots , d_\infty -1\}$ with the following property:
$$\forall a\in A, \quad \phi_a= a+\cdots +\sgn(a)^{q^{n(\phi)}} \tau^{\deg a}.$$

Let $n \in \{0, \cdots , d_\infty -1\}.$ We denote by $\Drin_n$ the set of sign-normalized rank one Drinfeld modules $\phi$ with $n(\phi)=n,$ and by $\Drin =\cup_{n=0}^{d_\infty-1} \Drin_n$ the set of sign-normalized rank one Drinfeld modules. By \cite{GOS}, Corollary 7.2.17, $\Drin$ is a finite set and we have:
$$\mid \Drin\mid=\mid {\rm Pic}(A)\mid\frac{q^{d_\infty}-1}{q-1}.$$

Let $\phi \in \Drin$ be a sign-normalized rank one Drinfeld module, we say that $\phi$ is standard if ${\rm Ker}\exp_\phi$ is a free $A$-module, where $\exp_\phi:\mathbb C_\infty \rightarrow \mathbb C_\infty$ is the exponential map attached to $\phi$ (see for example \cite{GOS}, paragraph 4.6).

\begin{lemma} \label{LemmaS1-1} 
Let $n\in \{0, \cdots, d_\infty-1\}.$ We have:
$$\mid \Drin_n\mid= \frac{1}{d_\infty}\mid {\rm Pic}(A)\mid\frac{q^{d_\infty}-1}{q-1}.$$
Let $\phi$ in $\Drin_n$ and let $[\phi]$ denote the set of the $\phi' $ in $\Drin_n$ which are isomorphic to $\phi.$ Then:
$$\forall \phi \in \Drin_n, \quad \mid [\phi]\mid = \frac{q^{d_\infty}-1}{q-1}.$$
In particular, if $[\Drin_n]=\{Ê[\phi], \phi \in \Drin_n\},$ we have:
$$\mid [\Drin_n]\mid= \frac{1}{d_\infty}\mid {\rm Pic}(A)\mid.$$
\end{lemma}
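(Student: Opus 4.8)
The plan is to count the various sets by exploiting the action of $\frac{\mathbb F_\infty^\times}{\mathbb F_q^\times}$ on sign functions, together with the known cardinality $\mid\Drin\mid = \mid\Pic(A)\mid\frac{q^{d_\infty}-1}{q-1}$. First I would observe that the group $\mathbb F_\infty^\times$ acts on the set of sign functions: given a fixed sign function $\sgn$ and $\zeta \in \mathbb F_\infty^\times$, twisting by $\zeta$ (more precisely, $a \mapsto \sgn(a)^{q^n}$ for varying $n$, or multiplicative twists) permutes the possible normalizations. The key structural input is that the different values $n(\phi) \in \{0,\dots,d_\infty-1\}$ are interchanged transitively by composing $\phi$ with the $d_\infty$-th power Frobenius twist, i.e. the map $\phi \mapsto \phi^{(1)}$ defined by $\phi^{(1)}_a = \tau \phi_a \tau^{-1}$ sends $\Drin_n$ to $\Drin_{n+1 \bmod d_\infty}$ bijectively. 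This gives $\mid\Drin_n\mid = \frac{1}{d_\infty}\mid\Drin\mid = \frac{1}{d_\infty}\mid\Pic(A)\mid\frac{q^{d_\infty}-1}{q-1}$ immediately.

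Next, for the isomorphism classes within a fixed $\Drin_n$: two sign-normalized Drinfeld modules $\phi, \phi'$ in $\Drin_n$ are isomorphic iff $\phi'_a = c\,\phi_a\,c^{-1}$ for some $c \in \mathbb C_\infty^\times$. Comparing leading coefficients forces $\sgn(a)^{q^n} = c^{1-\deg a}\sgn(a)^{q^n}$, hence $c^{\deg a - 1} = 1$ for all $a$; since $\deg$ is surjective onto $\mathbb Z$, taking $a$ of degree $0$ and degree $1$ (or just using that the gcd of $\{\deg a - 1\}$ is $1$) would at first suggest $c = 1$ — but one must be careful: the constraint is really that $c$ must lie in the group generated appropriately. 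The correct statement is that $c$ must satisfy $c^{q-1} \in \mathbb F_q^\times$-type relations coming from $\mathbb F_\infty$; I would identify the stabilizer and show the isomorphism class $[\phi]$ has exactly $\frac{q^{d_\infty}-1}{q-1}$ elements, matching $\mid\frac{\mathbb F_\infty^\times}{\mathbb F_q^\times}\mid$. Concretely, conjugation by $c \in \mathbb F_\infty^\times$ twists $\phi$ by a sign character, and the orbit of $\phi$ under $\mathbb F_\infty^\times/\mathbb F_q^\times$ stays inside $\Drin_n$ and is free, giving the count $\frac{q^{d_\infty}-1}{q-1}$.

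Finally, $\mid[\Drin_n]\mid = \mid\Drin_n\mid / \mid[\phi]\mid = \frac{1}{d_\infty}\mid\Pic(A)\mid\frac{q^{d_\infty}-1}{q-1} \cdot \frac{q-1}{q^{d_\infty}-1} = \frac{1}{d_\infty}\mid\Pic(A)\mid$, which is the last assertion. I expect the main obstacle to be the middle step — carefully pinning down which conjugations $c \in \mathbb C_\infty^\times$ preserve both the sign-normalization and the value $n(\phi) = n$, i.e. showing the isomorphisms internal to $\Drin_n$ are exactly conjugations by $\mathbb F_\infty^\times$ modulo $\mathbb F_q^\times$ acting freely. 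This requires knowing that $\mathbb F_\infty \subset B$ and understanding how $\sgn$ interacts with Frobenius twisting; once that bookkeeping is done, everything else is division.
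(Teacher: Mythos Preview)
Your argument for the first assertion has a genuine gap. The Frobenius twist $\phi^{(1)}_a := \tau\phi_a\tau^{-1}$ computes to $\sum_i \phi_{a,i}^q\,\tau^i$, whose constant term is $a^q$, not $a$. Thus $\phi^{(1)}$ is a Drinfeld module for the characteristic $a\mapsto a^q$ rather than the inclusion $A\hookrightarrow\mathbb C_\infty$, so $\phi^{(1)}\notin\Drin$ and your claimed bijection $\Drin_n\to\Drin_{n+1}$ does not exist via this construction. The paper's proof avoids this by using the Galois action of $G=\Gal(H/K)$ on coefficients: for $\sigma=\sigma_I$ one has $\psi^\sigma_a = a + \cdots + \sgn(a)^{q^{n(\psi)+\deg I}}\tau^{\deg a}$, because $\sigma$ fixes $a\in K$ but acts on $\sgn(a)\in\mathbb F_\infty\subset H$ as the $q^{\deg I}$-power map. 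Surjectivity of $\deg: \mathcal I(A)/\mathcal P_+(A)\to\mathbb Z/d_\infty\mathbb Z$ then forces all the $\Drin_n$ to have the same cardinality.

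Your second argument is correct in outline and matches the paper's, but your exponent bookkeeping is off: conjugation by $c$ sends $\phi_{a,i}$ to $c^{1-q^i}\phi_{a,i}$, so comparing leading coefficients yields $c^{q^{\deg a}-1}=1$, not $c^{\deg a-1}=1$. Since $\deg a = -d_\infty v_\infty(a)$ is always a multiple of $d_\infty$, this gives $c^{q^{d_\infty}-1}=1$, i.e.\ $c\in\mathbb F_\infty^\times$, and then your orbit--stabilizer count (stabilizer $=\End_{\mathbb C_\infty}(\phi)\cap\mathbb F_\infty^\times=\mathbb F_q^\times$) goes through exactly as in the paper. The final division is fine.
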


\begin{proof} 

Let $\psi:A\rightarrow H\{\tau\}$ be a sign-normalized rank one Drinfeld module (see \cite{GOS}, chapter 7).  Let $n(\psi)\in \mathbb Z$ be such that:
$$\forall a\in A, \quad \psi_a= a+\cdots + \sgn(a)^{q^{n(\psi)}}\tau^{\deg a}.$$ Then the set of sign-normalized rank one Drinfeld modules is exactly $\Drin=\{ \psi^{\sigma}, \sigma \in G\}.$ Let $\sigma \in G$ and write $\sigma =(I, H/K)$ for some $I \in \mathcal I(A).$  We have:
$$\forall a\in A, \quad \psi^{\sigma}_a= a+\cdots + \sgn(a)^{q^{n(\psi)+\deg (I)}}\tau^{\deg a}.$$
Note that $\deg :\mathcal I(A)\rightarrow \mathbb Z$ induces a surjective homomorphism of finite abelian groups:
$$\deg : \frac{\mathcal I(A)}{\mathcal P_+(A)} \rightarrow \frac{\mathbb Z}{d_\infty\mathbb Z}.$$
Since there are exactly $\mid {\rm Pic}(A)\mid\frac{q^{d_\infty}-1}{q-1}$ sign-normalized rank one Drinfeld modules and $d_\infty$ divides $\mid {\rm Pic}(A)\mid$, we get the  first assertion.

\medskip

Let $\phi \in \Drin_n$ and let $\phi' \in [\phi].$ Then there exists $\alpha \in \mathbb C_\infty^\times$ such that:
$$\forall a\in A, \quad \alpha \phi_a= \phi'_a\alpha.$$
Thus, $\alpha \in \mathbb F_\infty^\times.$  Since ${\rm End}_{\mathbb C_\infty}(\phi)=\{ \phi_a, a\in A\},$ we obtain:
$${\rm End}_{\mathbb C_\infty}(\phi)\cap \mathbb F_\infty=\mathbb F_q.$$
Hence,
$$\mid[\phi]\mid= \frac{q^{d_\infty}-1}{q-1}.$$
\end{proof}

\begin{lemma}\label{LemmaS1-2}
There are exactly $\frac{q^{d_\infty}-1}{q-1}$ standard elements in $\Drin$. Furthermore, if $\phi$ is such a Drinfeld module, then  $[\phi]$ is the set of standard elements in $\Drin$.
\end{lemma}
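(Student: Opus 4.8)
The plan is to count standard Drinfeld modules by exploiting the action of $G$ on $\Drin$ together with the fact, recalled in Lemma \ref{LemmaS1-1}, that the isomorphism classes within a fixed $\Drin_n$ all have size $\frac{q^{d_\infty}-1}{q-1}$. First I would recall from Hayes' theory (as in \cite{GOS}, chapter 7) that $\Drin = \{\psi^\sigma : \sigma \in G\}$ for any fixed $\psi \in \Drin$, and that the property of being standard — i.e. $\ker \exp_\phi$ being a free $A$-module — is detected by the class in $\Pic(A)$ of the rank one projective $A$-module $\ker \exp_\phi$ (more precisely, by Drinfeld's dictionary, $\phi$ corresponds to a rank one $A$-lattice $\Lambda_\phi \subset \mathbb C_\infty$, hence to a class in $\Pic(A)$, and $\phi$ is standard exactly when this class is trivial). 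So I would set up the map $\Drin \to \Pic(A)$, $\phi \mapsto [\ker \exp_\phi]$, and check it is $G$-equivariant where $G$ acts on $\Pic(A)$ through the quotient $\mathcal I(A)/\mathcal P_+(A) \simeq G \twoheadrightarrow \Pic(A)$.

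Next, the key point is that this map is surjective and that the $G$-action on $\Pic(A)$ is transitive (it is simply the action by translation of $\Pic(A)$ on itself via the surjection $G \to \Pic(A)$). Hence the fibre over the trivial class, which is exactly the set of standard Drinfeld modules, is a torsor under $\ker(G \to \Pic(A))$. By the description of $G$ as an extension of $\Pic(A)$ by $\Gal(H/H_A) \simeq \frac{\mathbb F_\infty^\times}{\mathbb F_q^\times}$ of order $\frac{q^{d_\infty}-1}{q-1}$, this kernel has order $\frac{q^{d_\infty}-1}{q-1}$, giving the count of standard elements. Alternatively, and perhaps more cleanly, I would observe that $\ker \exp_\phi$ depends only on the image of $\phi$ in $[\Drin]:=\bigcup_n [\Drin_n]$, which has $\frac{|\Pic(A)|}{d_\infty}\cdot d_\infty = |\Pic(A)|$ elements by Lemma \ref{LemmaS1-1}, the map $[\Drin] \to \Pic(A)$ is a bijection, so exactly one isomorphism class is standard, and that class has $\frac{q^{d_\infty}-1}{q-1}$ elements by the second part of Lemma \ref{LemmaS1-1} — which gives both assertions of the lemma at once.

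For the second assertion — that if $\phi$ is standard then $[\phi]$ is precisely the set of all standard elements — I would argue that being standard depends only on the isomorphism class: if $\phi' = \alpha \phi \alpha^{-1}$ for some $\alpha \in \mathbb C_\infty^\times$, then $\exp_{\phi'} = \alpha \exp_\phi \alpha^{-1}$ (conjugation of power series), so $\ker \exp_{\phi'} = \alpha \ker \exp_\phi$, which is free over $A$ iff $\ker \exp_\phi$ is. Combined with the fact that (from the bijection $[\Drin] \simeq \Pic(A)$) there is a single isomorphism class of standard modules, this shows that the standard elements form exactly one full isomorphism class $[\phi]$, of cardinality $\frac{q^{d_\infty}-1}{q-1}$ by Lemma \ref{LemmaS1-1}.

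The main obstacle I anticipate is making the dictionary "$\phi$ standard $\iff$ $[\ker \exp_\phi] = 0$ in $\Pic(A)$" and the $G$-equivariance of $\phi \mapsto [\ker \exp_\phi]$ rigorous and correctly normalized, in particular checking that the $\sigma = (I,H/K)$ action on $\phi$ translates $\ker \exp_\phi$ by the ideal class $[I]^{-1}$ (or $[I]$, depending on conventions), so that the orbit structure and the count of the kernel of $G \to \Pic(A)$ come out exactly to $\frac{q^{d_\infty}-1}{q-1}$. Everything else is bookkeeping with the already-established Lemma \ref{LemmaS1-1}.
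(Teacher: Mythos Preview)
Your primary approach is correct and is essentially the paper's: the paper cites \cite{GOS}, Corollary 4.9.5 and Theorem 7.4.8 to obtain (i) existence of a standard $\phi \in \Drin$ and (ii) that $\phi^\sigma$ is standard if and only if $\sigma|_{H_A} = \mathrm{Id}_{H_A}$, then concludes since $|\Gal(H/H_A)| = \frac{q^{d_\infty}-1}{q-1}$; your $G$-equivariant map $\Drin \to \Pic(A)$ with fibre over $0$ a torsor under $\ker(G \to \Pic(A)) = \Gal(H/H_A)$ is exactly the content of those citations, unpacked. One caution on your alternative route: the cardinality computation $|[\Drin]| = |\Pic(A)|$ via the disjoint union $\bigcup_n [\Drin_n]$ does not by itself give that $[\Drin] \to \Pic(A)$ is a bijection (you still need injectivity or surjectivity, e.g.\ from the $G$-action), so your first argument is the cleaner one.
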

\begin{proof} 
By \cite{GOS}, Corollary 4.9.5 and \cite{GOS}, Theorem 7.4.8, there exists $\phi\in \Drin$ such that $\phi$ is standard. In particular, $\Drin=\{\phi^\sigma, \sigma \in G\}.$
Again, by \cite{GOS}, Corollary 4.9.5 and \cite{GOS}, Theorem 7.4.8, the Drinfeld module $\phi^{\sigma}$ is standard if and only if  $\sigma\mid_{H_A}={\rm Id}_{H_A}.$ The Lemma follows.
\end{proof}

\subsection{Shtuka functions} ${}$\par

Let $\bar X= \mathbb C_\infty\otimes_{\mathbb F_q}X,$ $\bar A=\mathbb C_\infty\otimes_{\mathbb F_q}A,$ and let $F$ be the function field of $\bar X,$ i.e. $F={\rm Frac}(\bar A).$ We will identify $\mathbb C_\infty$ with its image $\mathbb C_\infty\otimes 1 $ in $F.$ There are $d_\infty$ points  in $\bar X(\mathbb C_\infty)$ above $\infty,$ and we denote the set of such points by $S_\infty.$ Observe that $\bar A$ is the set of elements of $F/\mathbb C_\infty$ which are ``regular outside $\infty$''. We denote by $\tau: F\rightarrow  F$ the homomorphism of $K$-algebras such that:
$$\tau\mid_{\bar A}= \tau\otimes 1.$$
For $m\in \mathbb Z,$ we also set:
$$\forall x\in F, \quad x^{(m)}=\tau^m (x).$$
Let $P$ be a point of $\bar X (\mathbb C_\infty).$ We denote by $P^{(i)}$ the point of $\bar X(\overline{K})$ obtained by applying $\tau^i$ to  the coordinates of $P.$  If $D=\sum_{j=1}^n   n_{P_j} P_j \in {\rm Div}(\bar X),$ with $P_j\in \bar X(\mathbb C_\infty),$  and $n_{P_j}\in \mathbb Z,$ we set:
$$D^{(i)}=\sum_{j=1}^n  n_{P_j} P_j^{(i)}.$$
If $D=(x)$, $x\in F^\times,$ then:
$$D^{(i)}= (x^{(i)}).$$
We consider $\xi\in \bar X(\mathbb C_\infty)$ the point corresponding to the kernel of the map:
$$\bar A\rightarrow \mathbb C_\infty , \quad \sum_i x_i\otimes a_i\mapsto \sum x_ia_i.$$
Let $\rho: K\rightarrow F, x\mapsto 1\otimes x$ and set $t=\rho(\pi^{-1}).$

\medskip

Let $\bar {\infty} \in S_\infty.$ We identify the  $\bar \infty$-adic completion of $F$ to $$\mathbb C_\infty((\frac{1}{t})).$$
Let $\sgn_{\bar{\infty}}:\mathbb C_\infty((\frac{1}{t}))^\times\rightarrow \mathbb C_\infty^\times$ be the group homomorphism such that ${\rm Ker}( \sgn_{\bar{\infty}})= t^{\mathbb Z} \times (1+\frac{1}{t}\mathbb C_\infty[[\frac{1}{t}]]),$ and $ \sgn_{\bar {\infty}} \mid _{\mathbb C_\infty^\times}= {\rm Id}\mid_{\mathbb C_\infty^{\times}}.$ 


\medskip

Let $\phi \in \Drin$.  For  $a\in A,$ we write $\phi_a=\sum_{i=0}^{\deg a} \phi_{a,i} \tau^i,$ $\phi_{a,i}\in H.$ By \cite{GOS}, chapter 6, and \cite{GOS}, Proposition 7.11.4, there exists $\bar{\infty}\in S_\infty$ and $f_{\phi}\in F^\times$  such that:
$$\forall a\in A, \quad \rho(a)=\sum_{i=0}^{\deg a} \phi_{a,i} f_{\phi}\cdots f_{\phi}^{(i-1)},$$
and  the divisor of $f_{\phi}$ is of the form:
$$(f_{\phi})=V^{(1)}-V+(\xi)-(\bar \infty),$$
where $V$ is some effective divisor of degree $g.$ Let $(\infty)=\sum_{\bar {\infty}'\in S_\infty} (\bar {\infty}').$ Set 
$$W(\mathbb C_\infty) =\cup_{m\geq 0} L(V+m (\infty)),$$ and $$L(V+m(\infty))=\{ x\in F^\times, (x)+V+m(\infty)\geq 0\} \cup\{0\}.$$  We have:
$$W(\mathbb C_\infty)=\oplus _{i\geq 0} \mathbb C_\infty f_{\phi}\cdots f_{\phi}^{(i-1)}.$$

The function $f_{\phi}$ is called the shtuka function attached to $\phi,$ and we say that $\phi$ is the signed-normalized rank one Drinfeld module associated to $f_{\phi}.$
We define the set of shtuka functions to be:
$$\Sht=\{ f_{\phi}, \phi \in \Drin\}.$$
Then, the map $\Drin\rightarrow \Sht, \phi\rightarrow f_{\phi}$ is a bijection called the Drinfeld correspondence. 

\begin{remark}
There is a misprint in \cite{GOS}, page 229. In fact, as we will see in the proof of of Lemma \ref{LemmaS2-1.1}, when $d_\infty>1,$ we do not have: $\sgn_{\bar {\infty}^{(-1)}} (f_{\phi})^{\frac{q^{d_\infty}-1}{q-1}}= 1$ as stated in the \textit{loc. cit.}
\end{remark}


\section{Special functions attached to shtuka functions}

\subsection{Basic properties of a shtuka function}
${}$\par

Let $\mathbb H={\rm Frac} (H\otimes_{\mathbb F_q}A),$ and  $\mathbb K={\rm Frac} (K\otimes_{\mathbb F_q}A).$ Recall that $G={\rm Gal}(H/K)$ and we will identify $G$ with the Galois group of $\mathbb H/\mathbb K.$ Let $f\in \Sht,$ and let  $\phi\in \Drin_{n(\phi)}$ be the sign-normalized rank one Drinfeld module attached to $f$ for some $n(\phi)\in \{0,\ldots, d_\infty-1\}.$ Then $\phi:A\rightarrow B\{\tau\}$ is a homomorphism of $\mathbb F_q$-algebras such that:
$$\forall a \in A , \quad \phi_a=\sum_{i=0}^{\deg a} \phi_{a,i} \tau^i,$$
where $\phi_{a,0}=a,$ $\phi_{a, \deg a}=\sgn (a)^{q^{n(\phi)}}, $ and $\rho(a)= \sum_{i=0}^{\deg a} \phi_{a,i} f\cdots f^{(i-1)}.$ Recall that there exists an effective $\mathbb H$-divisor $V$ (\cite{GOS}, chapter 6) of degree $g$ such that the divisor of $f$  is:
$$(f)=V^{(1)}-V+(\xi)-(\bar \infty),$$
for some $\bar{ \infty} \in S_\infty.$
By \cite{GOS}, Lemma 7.11.3, $\xi, \bar \infty^{(-1)}$ do not belong to the support of $V.$
Let  $v_{\bar \infty}$ be the normalized valuation on $\mathbb H$ attached to $\bar \infty$ ($v_{\bar \infty}(t)=-1$). Note that  $v_{\bar \infty}( f)\leq -1$ and, when $d_\infty>1,$   $\bar \infty$ can a priori belong  to the support of $V.$ We identify  the $\bar \infty$-adic  completion of $\mathbb H$ with  $ H((\frac{1}{t})).$ Therefore we deduce that:
$$f=\frac{\alpha(f)}{t^k}+\sum_{i\geq k+1} f_i \frac{1}{t^i}, k\leq -1$$
where $\alpha(f)\in H^\times,$  and $f_i\in H,$ for all $i\geq k+1.$

\medskip

Let $\exp_\phi$ be the unique element in $H\{\{\tau\}\}$ such that $\exp_\phi \equiv 1\pmod {\tau}$ and:
$$\forall a \in A, \quad \exp_\phi a=\phi_a\exp_\phi.$$
Write $\exp_\phi=\sum_{i\geq 0} e_i(\phi) \tau^i,$ then by \cite{GOS}, Corollary 7.4.9, we obtain:
$$H=K(e_i(\phi), i\geq 0).$$
Observe that $\exp_\phi$ induces an entire function on $\mathbb C_\infty,$ and there exists $\alpha\in \mathbb C_\infty^\times$ and $I \in \mathcal I(A)$ such that:
$$\forall z\in \mathbb C_\infty, \quad \exp_\phi(z)=\sum_{i\geq 0} e_i(\phi) z^{q^i}= z\prod_{a\in I\setminus\{0\}} (1- \frac{z}{\alpha a}).$$

Furthermore, we have (see for example \cite{THA2}, Proposition 0.3.6):
$$\forall i\geq 0, e_i(\phi)=\frac{1}{f\cdots f^{(i-1)}\mid_{\xi^{(i)}}}.$$
Thakur proved that if $e_n(\phi)=0,$ then $n\in \{ 2, \ldots, g-1\}$ (\cite{THA2}, proof of Theorem 3.2), and if $K$ has a place of degree one then $\forall n\geq 0, e_n(\phi)\not = 0.$ 

\medskip

Let $W(B)=\oplus_{i\geq 0} Bf\cdots f^{(i-1)}.$ Then $W(B)$ is a finitely generated $B\otimes_{\mathbb F_q}A=B[\rho(A)]$-module of rank one (see for example \cite{ANDTR1}, Lemma 4.4). Furthermore, 
$$\forall x\in W(B), \quad fx^{(1)}\in W(B).$$
Let $I \in \mathcal I(A).$ Let $\phi_I\in H\{\tau\}$ such that the coefficient of its term of highest degree in $\tau $ is one , and  such that:
$$\sum_{a\in I} H\{\tau\} \phi_a= H\{\tau \} \phi_I.$$
Then, we get:
\begin{align*}
\deg_\tau \phi_I &= \deg I,\\
{\rm Ker}\, \phi_I\mid_{\mathbb C_\infty} &= \cap_{a\in I} {\rm Ker}\, \phi_a\mid_{\mathbb C_\infty},\\
\phi_I &\in B\{\tau\}.
\end{align*}
We denote by $\psi_{\phi}(I) \in B\setminus \{0\}$ the constant term of $\phi_I.$ We set:
$$u_I= \sum_{j=0}^{\deg I} \phi_{I,j} f\cdots f^{(j-1)}\in W(B),$$
where $\phi_I=\sum_{j=0}^{\deg I} \phi_{I,j} \tau^j.$ 

\begin{lemma}\label{LemmaS2-1}
Let $I,J$ be two non-zero ideals of $A.$ We have:
\begin{align*}
u_I\mid_{\xi} &= \psi_{\phi}(I),\\
\sigma_I(f)u_I &= fu_I^{(1)},\\
u_{IJ} &= \sigma_I(u_J)u_I.
\end{align*}
\end{lemma}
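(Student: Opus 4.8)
The plan is to prove the three identities in order, since each builds on the previous. For the first identity $u_I\mid_\xi = \psi_\phi(I)$, I would evaluate the defining expression $u_I = \sum_{j=0}^{\deg I} \phi_{I,j} f\cdots f^{(j-1)}$ at the point $\xi$. The key observation is that $\xi$ is the point where the tautological map $\bar A \to \mathbb C_\infty$ has its kernel; more relevantly, the divisor of $f$ is $V^{(1)}-V+(\xi)-(\bar\infty)$, so $f$ vanishes at $\xi$. Hence for $j\geq 1$ the product $f\cdots f^{(j-1)}$ has a factor $f$ which vanishes at $\xi$, so every term with $j\geq 1$ dies and only the constant term $\phi_{I,0}f\cdots f^{(-1)} = \phi_{I,0} = \psi_\phi(I)$ survives. (I should double-check the empty-product convention for $j=0$, namely $f\cdots f^{(-1)} = 1$.) This is the same mechanism by which $\rho(a)\mid_\xi = a$, i.e. it is a direct analogue of the formula $e_i(\phi) = 1/(f\cdots f^{(i-1)}\mid_{\xi^{(i)}})$ combined with the functional equation for $\exp_\phi$.

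For the second identity $\sigma_I(f) u_I = f u_I^{(1)}$, the natural approach is to use the defining property of $\phi_I$ together with the fact that $f$ is the shtuka function. Recall $W(B) = \oplus_{i\geq 0} B f\cdots f^{(i-1)}$ and that $\rho$ intertwines $\phi$ with multiplication by elements of $\rho(A)$ via the $f$-expansion. The twist $\sigma_I$ acts on the coefficients $\phi_{a,i}\in H$; applying $\sigma_I$ to the Drinfeld module $\phi$ gives $\phi^{\sigma_I}$, whose shtuka function is $\sigma_I(f)$ — but since all shtuka functions attached to the *same* sign class differ by the Drinfeld correspondence, I need to be careful: the identity is really an identity in $\mathbb H$ relating $u_I$ (built from $\phi_I$) to its Frobenius twist. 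The cleanest route is to start from the relation $\phi_I \phi_a = \phi_{a}^{\sigma_I} \phi_I$ (which holds because $\phi_I$ generates the appropriate left ideal and $\sigma_I$ is the Artin symbol), rewrite both sides using the $f$-expansion $\rho(a) = \sum \phi_{a,i} f\cdots f^{(i-1)}$, and extract the claimed relation by comparing the "$f$-Fourier coefficients" in $W(B)$. The functional equation $\tau$ satisfies, namely $\tau(x f) = f^{(1)} x^{(1)}$-type manipulations, together with $f x^{(1)} \in W(B)$ for $x\in W(B)$, should let me pin down that $u_I$ satisfies $\sigma_I(f) u_I = f u_I^{(1)}$ — essentially $u_I$ is the unique (up to scalar) element of $W(B)$ transforming this way, fixed by the normalization at $\xi$ from the first identity.

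For the third identity $u_{IJ} = \sigma_I(u_J) u_I$, I would use multiplicativity of the construction $I\mapsto \phi_I$: since $\phi_{IJ}$ generates $\sum_{a\in IJ} H\{\tau\}\phi_a$ and one has $\phi_{IJ} = \phi_I^{\sigma_?}$-type factorization, the standard fact is $\phi_{IJ} = \sigma_I(\phi_J)\,\phi_I$ in $B\{\tau\}$ (up to the leading-coefficient normalization, which matches since both sides are monic in $\tau$). Translating this composition of twisted $\tau$-polynomials into the $f$-expansion, using that the $f$-expansion of a product/composition unfolds via the substitution $\tau^j \mapsto f\cdots f^{(j-1)}\tau^j$ compatibly with $\tau$, yields the stated product formula for the $u$'s. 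The main obstacle I anticipate is the second identity: getting the bookkeeping right for how $\tau$ and $\sigma_I$ interact on the coefficients in $\mathbb H$, and justifying that comparing $f$-expansion coefficients is legitimate (i.e. that $W(B)$ being free of rank one over $B[\rho(A)]$ gives uniqueness). Once the transformation law $\sigma_I(f)u_I = fu_I^{(1)}$ and the normalization $u_I\mid_\xi = \psi_\phi(I)$ are established, the third identity should follow formally, possibly even more cleanly by checking that $\sigma_I(u_J)u_I$ satisfies the transformation law and normalization that characterize $u_{IJ}$.
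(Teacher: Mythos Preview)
Your plan is correct and matches the paper's approach closely. Two small remarks: for the first identity, the paper computes the full divisor $(f\cdots f^{(i-1)})=V^{(i)}-V+\sum_{k=0}^{i-1}(\xi^{(k)})-\sum_{k=0}^{i-1}(\bar\infty^{(k)})$ and then invokes $\xi\notin\operatorname{supp}(V)$; your phrasing ``$f$ vanishes at $\xi$ so the product dies'' implicitly assumes the later factors $f^{(k)}$ have no poles at $\xi$, which is exactly what the telescoped divisor plus $\xi\notin\operatorname{supp}(V)$ guarantees. For the second and third identities, the paper does precisely what you outline but packages the ``$f$-Fourier coefficient'' bookkeeping via the explicit $B[\rho(A)]$-module isomorphism $\gamma:W(B)\to B\{\tau\}$, $f\cdots f^{(i-1)}\mapsto\tau^i$, together with the intertwining relation $\phi_a^{\sigma_I}\phi_I=\phi_I\phi_a$ from Goss; writing $f$ as a ratio $\frac{\sum\rho(a_i)b_i}{\sum\rho(c_k)d_k}$ and pushing the identity through $\gamma$ gives the second formula directly, and $\phi_{IJ}=\phi_J^{\sigma_I}\phi_I$ under $\gamma$ gives the third.
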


\begin{proof} In \cite{ANDTR1}, Lemma 4.6,  we only gave a sketch of the proof of the above results. We give here a detailed proof for the convenience of the reader.\par
Observe that:
$$\forall i\geq 1, \quad (f\cdots f^{(i-1)})= V^{(i)}-V +\sum_{k=0}^{i-1} (\xi^{(k)})-\sum_{k=0}^{i-1}(\bar \infty^{(k)}).$$
Since $\xi$ does not belong to the support of $V,$ we deduce that:
$$u_I\mid_{\xi}=\psi_{\phi}(I).$$
Note that we have a natural isomorphism of $B$-modules:
 $$\gamma : \left\{\begin{array}{ccc}
 W(B)&\stackrel{\sim}{\longrightarrow}&  B\{\tau\}\\
 \forall i\geq 0,\ f\cdots f^{(i-1)}&\longmapsto& \tau^i.
 \end{array}
 \right.$$
For all $x\in W(B)$ and for all $a\in A$, we have:
\begin{align*}
\gamma(fx^{(1)}) &= \tau \gamma (x),\\
\gamma (\rho(a)x) &=\gamma(x)\phi_a.
\end{align*}

In particular $\gamma$ is an isomorphism of $B[\rho(A)]$-modules, and since $W(B)$ is a finitely generated $B[\rho(A)]$-module of rank one, this is also the case of $B\{\tau\}.$ Write $f=\frac{\sum_i \rho(a_i)b_i}{\sum_k \rho(c_k)d_k},$ for some $a_i, c_k\in A,$ $b_i, d_k\in B,$ we have the following equality in $B\{\tau\}:$
 $$\sum_ib_i\phi_{a_i}=\sum_k d_k \tau \phi_{c_k}.$$
 For $\sigma \in G, $   we set:
 $$W_\sigma (B)=\oplus_{i\geq 0} B\sigma(f)\cdots \sigma(f)^{(i-1)}.$$
 We have again an isomorphism of $B[\rho(A)]$-modules:
 $$\gamma_{\sigma} : W_\sigma (B)\simeq B\{\tau\}.$$
Again,
 $$\forall x\in W_{\sigma}(B), \forall a\in A, \quad \gamma_{\sigma} (\rho(a)x)=\gamma_{\sigma}(x)\phi^{\sigma}_a.$$
 
 
\medskip

Let $I$ be a non-zero ideal of $A,$ and let $\sigma=\sigma_I\in G.$  We start from the relation:
 $$\sum_ib_i^{\sigma}\phi^{\sigma}_{a_i}=\sum_k d_k^{\sigma} \tau \phi^{\sigma}_{c_k}.$$
 We multiply on the right by  $\phi_I, $ to obtain (see \cite{GOS}, Theorem 7.4.8):
$$\sum_ib_i^{\sigma}\phi_I\phi_{a_i}=\sum_k d_k^{\sigma} \tau\phi_I \phi_{c_k}.$$
Since $\gamma(fu_I^{(1)})= \tau \phi_I,$ we get:
 $$(\sum_i \rho(a_i) b_i^{\sigma}). \gamma(u_I)= (\sum_k d_k^{\sigma}\rho(c_k)). \gamma( fu_I^{(1)}).$$
In other words, we have proved:
$$\sigma(f)u_I= fu_I^{(1)}.$$

\medskip 

Now, let $J$ be a non-zero ideal of $A.$ We have:
$$\gamma(u_{IJ})= \phi_{IJ}= \phi^{\sigma}_J\phi_I.$$
Since $\forall i\geq 0, \sigma(f\cdots f^{(i-1)})u_I= f\cdots f^{(i-1)}u_I^{(i)},$ we get:
$$\gamma (u_J^{\sigma} u_I)=\phi^{\sigma}_J\phi_I.$$
It implies:
$$u_{IJ}= \sigma(u_J)u_I.$$

\end{proof}
\begin{corollary}\label{CorollaryS2-1}
We have:
$$\Sht=\{ \sigma(f), \sigma\in G\}.$$
Furthermore, for $\sigma \in G,$ $\phi^{\sigma}$ is the Drinfeld module associated to the shtuka function $\sigma(f).$
\end{corollary}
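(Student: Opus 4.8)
The plan is to deduce Corollary \ref{CorollaryS2-1} directly from Lemma \ref{LemmaS2-1}, using the relation $\sigma_I(f)u_I = f u_I^{(1)}$ together with the description of shtuka functions via their divisors. First I would observe that for a non-zero ideal $I$ of $A$, the element $u_I \in W(B)$ has a known divisor: from the computation in the proof of Lemma \ref{LemmaS2-1} (namely $(f\cdots f^{(i-1)}) = V^{(i)} - V + \sum_{k=0}^{i-1}(\xi^{(k)}) - \sum_{k=0}^{i-1}(\bar\infty^{(k)})$) and the fact that $u_I$ lies in $W(B)$ with $u_I\mid_\xi = \psi_\phi(I) \neq 0$, one gets that the divisor of $u_I$ is effective away from $\bar\infty$ and has a controlled pole at the points above $\infty$. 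Combined with the functional equation $\sigma_I(f) u_I = f u_I^{(1)}$, this means $\sigma_I(f) = f \cdot u_I^{(1)}/u_I$, so I can compute $(\sigma_I(f))$ as $(f) + (u_I)^{(1)} - (u_I)$.

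Next I would check that this divisor has exactly the shape required of a shtuka function attached to some sign-normalized rank one Drinfeld module, i.e. of the form $V'^{(1)} - V' + (\xi) - (\bar\infty')$ with $V'$ effective of degree $g$ and $\bar\infty' \in S_\infty$. The key point is that $(\xi)$ appears with coefficient $+1$ (since $\xi$ is not in the support of $V$ nor, by $u_I\mid_\xi \neq 0$, in the support of $(u_I)$), that the pole part at $S_\infty$ is a single point with multiplicity one (this uses $\deg_\tau \phi_I = \deg I$ to keep track of the total contribution at infinity, together with the Galois action permuting the points of $S_\infty$), and that the rest, $V'^{(1)} - V'$ where $V' = V + (\text{effective part of } (u_I))$, is $\tau$-trace-zero of the right degree. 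Then by the Drinfeld correspondence $\Drin \to \Sht$, $\sigma_I(f)$ equals $f_{\phi'}$ for a unique $\phi' \in \Drin$; identifying $\phi'$ with $\phi^{\sigma_I}$ follows by applying $\sigma_I$ to the defining relation $\rho(a) = \sum_i \phi_{a,i} f \cdots f^{(i-1)}$ and using $\sigma_I(f\cdots f^{(i-1)}) u_I = f \cdots f^{(i-1)} u_I^{(i)}$ (from the cocycle relation in Lemma \ref{LemmaS2-1}) to rewrite it in terms of $\sigma_I(f)$.

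Finally, to get $\Sht = \{\sigma(f) : \sigma \in G\}$ as an equality of sets, I would note that $I \mapsto \sigma_I$ surjects onto $G$ and that distinct $\sigma$ give distinct shtuka functions: if $\sigma(f) = \sigma'(f)$ then by the Drinfeld correspondence $\phi^\sigma = \phi^{\sigma'}$, and since the $G$-action on $\Drin$ is simply transitive (as $\mid\Drin\mid = \mid G\mid$, already recorded in the excerpt), $\sigma = \sigma'$. Hence $\{\sigma(f) : \sigma \in G\}$ has $\mid G\mid = \mid\Drin\mid = \mid\Sht\mid$ elements and is contained in $\Sht$, so the two sets coincide.

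The main obstacle I anticipate is the divisor bookkeeping at the points of $S_\infty$ when $d_\infty > 1$: one must show the pole of $\sigma_I(f)$ at infinity is concentrated at a single point of $S_\infty$ with multiplicity exactly one, rather than spread out, and this is precisely where the subtlety flagged in the Remark (the failure of $\sgn_{\bar\infty^{(-1)}}(f_\phi)^{(q^{d_\infty}-1)/(q-1)} = 1$) enters. Controlling this requires carefully tracking how $\sigma_I$ shifts the sign data — equivalently using $n(\phi^{\sigma_I}) = n(\phi) + \deg I \bmod d_\infty$ from the proof of Lemma \ref{LemmaS1-1} — and matching it with the valuation $v_{\bar\infty}(u_I)$ at each point above $\infty$; everything else is formal manipulation of the identities already established in Lemma \ref{LemmaS2-1}.
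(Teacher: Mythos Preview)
Your route via divisor computation is considerably more involved than the paper's, and the obstacle you anticipate at $S_\infty$ is genuine but entirely avoidable. The paper computes no divisors. Instead it reverses the logic: since $\phi^\sigma \in \Drin$ (this is already recorded in the background, $\Drin = \{\phi^\sigma : \sigma \in G\}$), the Drinfeld module $\phi^\sigma$ automatically has a shtuka function $g \in \Sht$, and the only task is to identify $g$ with $\sigma(f)$. For this the paper uses the characterization established inside the proof of Lemma~\ref{LemmaS2-1}: writing $f = \frac{\sum_i \rho(a_i) b_i}{\sum_k \rho(c_k) d_k}$, the isomorphism $\gamma$ converts this into the relation $\sum_i b_i \phi_{a_i} = \sum_k d_k \tau \phi_{c_k}$ in $B\{\tau\}$, and conversely (via $\gamma_\psi$) any relation $\sum_i b'_i \psi_{a'_i} = \sum_k d'_k \tau \psi_{c'_k}$ for $\psi \in \Drin$ recovers its shtuka function as $\frac{\sum_i \rho(a'_i) b'_i}{\sum_k \rho(c'_k) d'_k}$. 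Applying $\sigma$ to the relation for $\phi$ gives $\sum_i b_i^\sigma \phi^\sigma_{a_i} = \sum_k d_k^\sigma \tau \phi^\sigma_{c_k}$, whence $g = \frac{\sum_i \rho(a_i) b_i^\sigma}{\sum_k \rho(c_k) d_k^\sigma} = \sigma(f)$. The set equality $\Sht = \{\sigma(f) : \sigma \in G\}$ then follows at once from $\Drin = \{\phi^\sigma : \sigma \in G\}$ and the bijectivity of the Drinfeld correspondence.

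Your step 3 is in fact very close to this: applying $\sigma_I$ to $\rho(a) = \sum_i \phi_{a,i} f \cdots f^{(i-1)}$ already yields $\rho(a) = \sum_i \phi^{\sigma_I}_{a,i}\, \sigma_I(f) \cdots \sigma_I(f)^{(i-1)}$, the defining relation for the shtuka function of $\phi^{\sigma_I}$. Had you run step 3 first, taking as input that $\phi^{\sigma_I}$ already possesses a shtuka function, your steps 1--2 would be redundant. The payoff of the paper's route is precisely that it sidesteps all the infinity bookkeeping; your route could in principle be completed, but it demands control of $v_{\bar\infty}(u_I)$ and of the sign shift that is not supplied anywhere in the paper and that you yourself leave unresolved.
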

\begin{proof} Let $\sigma \in G$ and let $g\in \Sht$ be the shtuka function associated to $\phi^{\sigma}.$ By the proof of Lemma \ref{LemmaS2-1}, if   $a_i', c_k'\in A,$ $b_i', d_k'\in B$  are such that $\sum_ib_i'\phi^{\sigma}_{a_i'}=\sum_k d_k' \tau \phi^{\sigma}_{c_k'},$ then:
$$g=\frac{\sum_i \rho(a_i')b_i'}{\sum_k \rho(c_k')d_k'}.$$
Again, by the proof of Lemma \ref{LemmaS2-1}, we get:
$$g=\sigma(f).$$
\end{proof}

\begin{lemma}\label{LemmaS2-1.1}
Let $\iota_{\bar {\infty}}: \mathbb H\rightarrow H((\frac{1}{t}))$ be a  homomorphism of $\mathbb K$-algebras corresponding to $\bar{\infty}.$ Write $\iota_{\bar{\infty}}(f)=\frac{\alpha(f)}{t^k}+\sum_{i\geq k+1} f_i\frac{1}{t^i} \in H((\frac{1}{t})), \alpha(f)\in H^\times, f_i\in H, i\geq 0, k\leq -1.$ Then:
$$H=K(\mathbb F_\infty, \alpha(f), f_i, i\geq k+1).$$
Furthermore:
$$H_A=K(\mathbb F_\infty,  \frac{f_i}{\alpha(f)}, i\geq k+1).$$
In particular, there exists $u(f)\in B^\times$ such that:
\begin{itemize}
\item $H=H_A(u(f))$,
\item $\alpha(f)\equiv \iota_{\bar{\infty}}(u(f))\pmod{H_A^\times}$,
\item $\mathbb K(\frac{f}{u(f)})= {\rm Frac}(H_A\otimes _{\mathbb F_q}A)$.
\end{itemize}
\end{lemma}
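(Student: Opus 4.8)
The plan is to exploit the Galois action on shtuka functions (Corollary~\ref{CorollaryS2-1}), the known generation statements for $H$ and $H_A$ over $K$, and the factorization of $\Gal(H/K)$ through $\Gal(H/H_A)\simeq \bF_\infty^\times/\bF_q^\times$ to pin down exactly which data of $f$ generate which field. First I would establish the generation statement $H=K(\bF_\infty,\alpha(f),f_i,\ i\geq k+1)$. One inclusion is clear since all the $f_i$ and $\alpha(f)$ lie in $H$ and $\bF_\infty\subset B\subset H$. For the reverse inclusion, let $L=K(\bF_\infty,\alpha(f),f_i,\ i\geq k+1)\subseteq H$ and let $\sigma\in\Gal(H/L)$. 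Then $\sigma$ fixes every coefficient of $\iota_{\bar\infty}(f)$, so $\iota_{\bar\infty}(\sigma(f))=\iota_{\bar\infty}(f)$; since $\iota_{\bar\infty}$ is injective on $\mathbb{H}$, this forces $\sigma(f)=f$. By the bijectivity of the Drinfeld correspondence and Corollary~\ref{CorollaryS2-1}, $\phi^\sigma=\phi$; but then $e_i(\phi^\sigma)=e_i(\phi)$ for all $i$, and because $H=K(e_i(\phi),i\geq 0)$ (quoted from \cite{GOS}, Corollary 7.4.9), $\sigma$ fixes $H$, i.e. $\sigma=\mathrm{id}$. Hence $L=H$.

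Next I would identify $H_A$. The key input is the description, via Lemma~\ref{LemmaS1-1} and Lemma~\ref{LemmaS1-2}, of $\Gal(H/H_A)$: it is the image of the subgroup of $\mathcal{I}(A)/\mathcal{P}_+(A)$ of degree $0$, and under the Artin map $\sigma_I$ acts on the leading ``sign data'' of a sign-normalized Drinfeld module by $\sgn(\cdot)^{q^{n}}\mapsto\sgn(\cdot)^{q^{n+\deg I}}$. Concretely, for $\sigma\in\Gal(H/H_A)$ we have $\phi^\sigma$ isomorphic to $\phi$ over $\mathbb{C}_\infty$ (same $n(\phi)$, since $\deg I\equiv 0$), so there is $\zeta=\zeta(\sigma)\in\bF_\infty^\times$ with $\zeta\phi_a=\phi^\sigma_a\zeta$ for all $a$; unwinding the shtuka relation $\rho(a)=\sum\phi_{a,i}f\cdots f^{(i-1)}$ and $\rho(a)=\sum\phi^\sigma_{a,i}\sigma(f)\cdots\sigma(f)^{(i-1)}$ then gives $\sigma(f)=\zeta(\sigma)\,f$ for some $\zeta(\sigma)\in\bF_\infty^\times$ (depending on $\sigma$), with $\sigma\mapsto\zeta(\sigma)$ inducing an injection $\Gal(H/H_A)\hookrightarrow\bF_\infty^\times/\bF_q^\times$, in fact a bijection by the order count. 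Since $\sigma(f)=\zeta(\sigma)f$ scales every coefficient $f_i$ and $\alpha(f)$ by the same $\zeta(\sigma)$, each ratio $f_i/\alpha(f)$ is $\Gal(H/H_A)$-invariant, hence lies in $H_A$; conversely, if $\sigma\in\Gal(H/K)$ fixes all these ratios and fixes $\bF_\infty$, then $\iota_{\bar\infty}(\sigma(f))=(\sigma\alpha(f)/\alpha(f))\iota_{\bar\infty}(f)$ is a scalar multiple of $\iota_{\bar\infty}(f)$ by an element of $\bF_\infty^\times$ (one checks the scalar must be a unit of the right residue field using that $\alpha(f)\in H^\times$), so $\sigma(f)=\zeta f$ with $\zeta\in\bF_\infty^\times$, which by the previous sentence means $\sigma\in\Gal(H/H_A)$. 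This gives $H_A=K(\bF_\infty,f_i/\alpha(f),\ i\geq k+1)$.

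Finally, for the element $u(f)$: since $\Gal(H/H_A)\simeq\bF_\infty^\times/\bF_q^\times$ acts on $f$ by the character $\zeta$, the coefficient $\alpha(f)\in H^\times$ transforms by $\sigma(\alpha(f))=\zeta(\sigma)\alpha(f)$, i.e. $\alpha(f)$ is a ``$\zeta$-eigenvector'' for this cyclic group. By Hilbert 90 (or a direct Kummer-theory argument for the cyclic, tamely-ramified extension $H/H_A$, using $\bF_\infty\subset H_A$ as stated), the $\zeta$-eigenspace of $H^\times$ is nontrivial and generated over $H_A^\times$ by a single unit; pick $u(f)\in B^\times$ generating the $\zeta$-eigenline, so that $\sigma(u(f))=\zeta(\sigma)u(f)$ for all $\sigma\in\Gal(H/H_A)$, whence $H=H_A(u(f))$ and $\alpha(f)/u(f)\in H^\times$ is $\Gal(H/H_A)$-invariant, i.e. $\alpha(f)\equiv u(f)\pmod{H_A^\times}$. (That $u(f)$ can be chosen a $B$-unit rather than just an $H$-unit uses that $H/H_A$ is unramified outside $\infty$ and totally, tamely ramified at $\infty$, so the eigen-unit can be normalized to have trivial divisor; alternatively one may simply take a suitable root of the principal unit generating the different-free part.) Then $f/u(f)$ is fixed by $\Gal(H/H_A)$, so $\mathbb{K}(f/u(f))\subseteq\mathrm{Frac}(H_A\otimes_{\bF_q}A)$; conversely, $\alpha(f/u(f))=\alpha(f)/\iota_{\bar\infty}(u(f))$ and the coefficients $f_i/\iota_{\bar\infty}(u(f))$ lie in $H_A$ and, combined with $\bF_\infty$ and the ratios $f_i/\alpha(f)$ already shown to generate $H_A$, force equality $\mathbb{K}(f/u(f))=\mathrm{Frac}(H_A\otimes_{\bF_q}A)$.

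\textbf{Main obstacle.} The routine half is each ``one inclusion is clear'' step; the genuine difficulty is twofold. First, one must be careful that the scalar relating $\sigma(f)$ and $f$ is an $\bF_\infty^\times$-scalar and not merely an element of $H^\times$ — this is where the sign-normalization and the precise shape of the divisor $(f)=V^{(1)}-V+(\xi)-(\bar\infty)$ (in particular that $\sigma$ permutes nothing and fixes $\bar\infty$ because $\Gal(H/H_A)$ is the inertia at $\infty$) must be used, and it is precisely the point flagged in the Remark after the definition of $\Sht$, namely that the naive normalization $\sgn_{\bar\infty^{(-1)}}(f)^{(q^{d_\infty}-1)/(q-1)}=1$ fails when $d_\infty>1$. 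Second, extracting the $B$-unit $u(f)$ with the exact eigen-property — as opposed to an $H$-unit — requires the structure of $H/H_A$ as a totally tamely ramified cyclic extension at $\infty$ and unramified elsewhere, so that the relevant eigenspace of $B^\times$ is as large as that of $H^\times$; I expect this normalization to be the most delicate bookkeeping in the argument.
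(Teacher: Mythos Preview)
Your overall strategy---analyze the $G$-action on $f$, then use Kummer theory for the unit---matches the paper's, but there are two places where the paper is more direct and your argument has genuine gaps.

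For the $\Gal(H/H_A)$-action on $f$, the paper does not go through isomorphism classes of Drinfeld modules. It applies Lemma~\ref{LemmaS2-1} with $I=aA$ principal to obtain the explicit formula $\sigma_{aA}(f)=\sgn(a)^{q^{n(\phi)}-q^{n(\phi)+1}}f$, which immediately shows both that $\Gal(H/H_A)$ acts by $\bF_\infty^\times$-scalars and that the resulting character is faithful. Your route via ``$\phi^\sigma\cong\phi$, hence conjugation by some $\zeta\in\bF_\infty^\times$'' reaches the same conclusion, but the parenthetical ``same $n(\phi)$, since $\deg I\equiv 0$'' is not a justification for isomorphism: there are $|\Pic(A)|/d_\infty$ isomorphism classes in $\Drin_n$ for each $n$. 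What is actually needed is that the $G$-action on isomorphism classes factors through $\Pic(A)$, so that principal ideals act trivially.

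Your converse step for the $H_A$ assertion is where the real gap lies. From ``$\sigma$ fixes all ratios $f_i/\alpha(f)$'' you deduce $\sigma(f)=c\cdot f$ with $c=\sigma(\alpha(f))/\alpha(f)\in H^\times$, then assert $c\in\bF_\infty^\times$ with only ``one checks the scalar must be a unit of the right residue field''. Nothing so far forces $c$ to be constant, and two distinct shtuka functions can certainly differ by a non-constant factor. The paper sidesteps this entirely by a degree count: once one knows $\alpha(f)^{(q^{d_\infty}-1)/(q-1)}\in H_A$ and $f/\alpha'(f)\in\mathrm{Frac}(H_A\otimes_{\bF_q}A)$ (both immediate from the explicit scalar action above), the fact $\mathbb H=\mathbb K(f)$ together with $[\mathbb H:\mathrm{Frac}(H_A\otimes_{\bF_q}A)]=(q^{d_\infty}-1)/(q-1)$ forces $\mathbb K(f/\alpha'(f))=\mathrm{Frac}(H_A\otimes_{\bF_q}A)$, and hence the second generation statement.

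One smaller omission: the step ``$\sigma$ fixes every coefficient of $\iota_{\bar\infty}(f)$, hence $\iota_{\bar\infty}(\sigma(f))=\iota_{\bar\infty}(f)$'' uses $\iota_{\bar\infty}\circ\sigma=\sigma\circ\iota_{\bar\infty}$, which only holds for $\sigma\in\Gal(H/K(\bF_\infty))$; the paper points this out (it follows because $\infty$ splits completely in $K(\bF_\infty)$). Your field $L$ contains $K(\bF_\infty)$ so the argument survives, but this must be said---especially since $\iota_{\bar\infty}|_H$ is itself a nontrivial element of $G$, which matters when you pull scalars in $H$ back through $\iota_{\bar\infty}$.

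For the unit $u(f)$, your Kummer/Hilbert~90 sketch is correct and aligns with the paper. The paper makes the passage from an $H^\times$-eigenvector to a $B^\times$-eigenvector precise by first establishing the isomorphism $B^\times/(B')^\times\simeq\bF_\infty^\times/\bF_q^\times$, appealing to \cite{GOS}, Theorem~7.6.4, for a generator, and then invoking Kummer theory.
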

\begin{proof} 
By Corollary \ref{CorollaryS2-1}, since $| G|= |\Sht|,$  we have:
$$\mathbb H=\mathbb K(f).$$
Recall that $H((\frac{1}{t}))$ is isomorphic to  the completion of $\mathbb H$ at $\bar \infty.$  Since $\infty$ splits totally in $K(\mathbb F_\infty)$ in $d_\infty$ places, we deduce that the natural map $\iota_{\bar{\infty}}: \mathbb H\hookrightarrow H((\frac{1}{t}))$ is ${\rm Gal}(H/ K(\mathbb F_\infty))$-equivariant. Thus:
$$H=K(\mathbb F_\infty, \alpha(f), f_i, i\geq k+1).$$
If $I=aA, a\in A\setminus\{0\},$ then $u_I = \rho(a)$, so that we have by Lemma \ref{LemmaS2-1} :
$$\sigma_I(f)=\sgn(a)^{q^{n(\phi)}-q^{n(\phi)+1}}f.$$
In particular:
$$\sgn_{\bar {\infty}^{(-1)}} (\iota_{\bar{\infty}^{(-1)}}(f))\not \in \mathbb F_\infty^\times.$$
We have $\alpha(f)^{\frac{q^{d_\infty}-1}{q-1}}\in H_A,$ and $\frac{f}{\alpha'(f)}\in {\rm Frac}(H_A\otimes _{\mathbb F_q}A),$ where $\alpha'(f)\in H^\times$ is such that $\iota_{\bar{\infty}}(\alpha'(f))=\alpha(f)$ (observe that $\iota_{\bar{\infty}} \mid_H \in G$). Since $\mathbb H=\mathbb K(f),$ we get the second  assertion.

\medskip

Since $H/H_A$ is totally ramified at each place of $H_A$ above $\infty,$ $\frac{B^\times}{(B)'^\times }$ is a finite abelian group, where we recall that $B'$ is the integral closure of $A$ in $H_A.$ Now recall that $H/H_A$ is a cyclic extension of degree $\frac{q^{d_\infty}-1}{q-1},$ and $\mathbb F_\infty \subset H_A.$ Let $\langle \sigma \rangle= {\rm Gal}(H_A((B)^\times)/H_A).$ Then we have an injective homomorphism:
$$\frac{B^\times}{(B')^\times }\hookrightarrow \mathbb F_\infty^\times, x\mapsto \frac{x}{\sigma(x)}.$$
The image of this homomorphism is a cyclic group of order dividing $\frac{q^{d_\infty}-1}{q-1}.$ By the proof \cite{GOS}, Theorem 7.6.4, there exists $\zeta\in \mathbb C_\infty^\times, \zeta^{q-1} \in H,$ such that:
$$\forall a\in A\setminus \{0\}, \zeta\phi_a\zeta^{-1}\in B'\{\tau\} \, {\rm and\,  its\,  highest \, coefficient\,  is\,  in\, } (B')^\times.$$
Thus $\zeta^{q-1}\in B^\times$ and $H=H_A(\zeta^{q-1}).$ In particular, there exists a group isomorphism:
$$ \frac{B^\times}{(B')^\times }\simeq \frac{\mathbb F_\infty^\times}{\mathbb F_q^\times}.$$
This implies by Kummer Theory that:
$$\alpha(f)\equiv u'(f) \pmod{H_A^\times},$$
for some $u'(f)\in  B^\times$ that generates the cyclic group $ \frac{B^\times}{(B')^\times }.$ Now define $u(f)$ to be the element in
$B^\times$ such that $\iota_{\bar{\infty}}(u(f))=u'(f).$
\end{proof}

\subsection{Special functions}${}$\par


We fix $\sqrt[q^{d_\infty}-1]{-\pi}\in \mathbb C_\infty$ a root of the polynomial $X^{q^{d_\infty}-1}+\pi=0.$ We consider the period lattice of $\phi$:
$$\Lambda (\phi)= \{ x\in \mathbb C_\infty,\exp_{\phi}(x)=0\}.$$
Then $\Lambda (\phi)$ is a finitely generated $A$-module of rank one and we have an exact sequence of $A$-modules induced by $\exp_{\phi}:$
$$0\rightarrow\Lambda (\phi) \rightarrow \mathbb C_\infty \rightarrow \phi(\mathbb C_\infty)\rightarrow 0,$$
where $\phi(\mathbb C_\infty)$ is the $\mathbb F_q$-vector space $\mathbb C_\infty$ viewed as an $A$-module via $\phi.$

\begin{lemma}\label{LemmaS2-2} 
We have:
$$\Lambda(\phi) \subset \sqrt[q^{d_\infty}-1]{-\pi}^{-q^{n(\phi)}}K_\infty,$$
and for all $I \in \mathcal I(A):$
$$\Lambda(\phi^{\sigma_I})= \psi_{\phi}(I)I^{-1} \Lambda(\phi).$$
\end{lemma}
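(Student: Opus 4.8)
The plan is to establish the two assertions of Lemma~\ref{LemmaS2-2} separately, the first by an analysis of the leading behaviour of $\exp_\phi$ at $\infty$, the second by transporting lattices along the isogeny coming from $\phi_I$.

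For the inclusion $\Lambda(\phi)\subset\sqrt[q^{d_\infty}-1]{-\pi}^{-q^{n(\phi)}}K_\infty$, I would first recall that $\Lambda(\phi)$ is a projective $A$-module of rank one, hence $\Lambda(\phi)=\alpha\,I$ for some $\alpha\in\mathbb C_\infty^\times$ and $I\in\mathcal I(A)$, as is already used in the discussion of $\exp_\phi$ above. The point is to pin down $\alpha^{q^{d_\infty}-1}$ up to $K_\infty^\times$. Choose $a\in A$ of positive degree, say $\deg a = m$. Since $\phi\in\Drin_{n(\phi)}$, the leading coefficient of $\phi_a$ is $\sgn(a)^{q^{n(\phi)}}$, and the functional equation $\exp_\phi(az)=\phi_a(\exp_\phi(z))$ together with the product expansion forces a relation between $\Lambda(\phi)$ and $a\Lambda(\phi)$: comparing leading terms of the entire functions on both sides (the top-degree coefficient of $\phi_a$ against the ratio of leading coefficients of the Weierstrass products for $a\Lambda(\phi)$ and $\Lambda(\phi)$) yields that $\alpha^{q^{\deg a}-1}$ lies in $\sgn(a)^{-q^{n(\phi)}}K_\infty^\times$ — the $K_\infty^\times$ slack absorbing the contribution of $I$, which is defined over $A\subset K_\infty$, and the fudge of choosing a generator. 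Taking $a$ with $\deg a$ prime to appropriate quantities, or combining two such relations, isolates $\alpha$ modulo $K_\infty^\times$ as $\sqrt[q^{d_\infty}-1]{-\pi}^{-q^{n(\phi)}}$ times a unit of $K_\infty$; here one uses $v_\infty(\pi)=1$, $\deg(xA)=-d_\infty v_\infty(x)$, and that $-\pi$ is (up to $(q^{d_\infty}-1)$-st powers) the canonical uniformizer-type quantity whose fractional power generates the relevant Kummer extension. This gives the claimed inclusion.

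For the second assertion, fix $I\in\mathcal I(A)$ and recall from the paragraph preceding Lemma~\ref{LemmaS2-1} the element $\phi_I\in B\{\tau\}$ with leading coefficient $1$, constant term $\psi_\phi(I)$, generating $\sum_{a\in I}H\{\tau\}\phi_a$, and satisfying $\ker\phi_I|_{\mathbb C_\infty}=\cap_{a\in I}\ker\phi_a|_{\mathbb C_\infty}$. The defining relation $\sum_{a\in I}H\{\tau\}\phi_a=H\{\tau\}\phi_I$ shows that for each $a\in I$ there is $\phi_I\cdot(\text{something})=(\text{something})\cdot\phi_a$, and more precisely that $\phi_I$ intertwines $\phi$ with $\phi^{\sigma_I}$: for all $a\in A$ one has $\phi_I\,\phi_a=\phi^{\sigma_I}_a\,\phi_I$ (this is exactly the isogeny statement used in the proof of Lemma~\ref{LemmaS2-1} when multiplying on the right by $\phi_I$, invoking \cite{GOS}, Theorem~7.4.8). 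Consequently the entire function $F_I(z):=\phi_I(\exp_\phi(z))$ satisfies $F_I(az)=\phi^{\sigma_I}_a(F_I(z))$ for all $a\in A$ and is $\equiv \psi_\phi(I)z\pmod{z^q}$, so by uniqueness of the exponential $F_I(z)=\exp_{\phi^{\sigma_I}}(\psi_\phi(I)z)$. Taking kernels: $z\in\Lambda(\phi^{\sigma_I})$ iff $\psi_\phi(I)^{-1}z\in\ker F_I$, i.e. $\exp_\phi(\psi_\phi(I)^{-1}z)\in\ker\phi_I|_{\mathbb C_\infty}=\cap_{a\in I}\ker\phi_a|_{\mathbb C_\infty}$, which translates via the exact sequence $0\to\Lambda(\phi)\to\mathbb C_\infty\to\phi(\mathbb C_\infty)\to0$ into $\psi_\phi(I)^{-1}z\in I^{-1}\Lambda(\phi)$ (the preimage under $\exp_\phi$ of the $I$-torsion of $\phi(\mathbb C_\infty)$ is exactly $I^{-1}\Lambda(\phi)$, using that $\Lambda(\phi)$ has rank one and $A/I$-duality). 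Hence $\Lambda(\phi^{\sigma_I})=\psi_\phi(I)I^{-1}\Lambda(\phi)$.

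I expect the main obstacle to be the first assertion: carefully extracting the field-of-definition / valuation-normalization information for the period $\alpha$ modulo $K_\infty^\times$, in particular correctly accounting for the factor $\sgn(a)^{q^{n(\phi)}}$ and showing the exponent of $-\pi$ is exactly $-q^{n(\phi)}$ rather than some other representative mod $q^{d_\infty}-1$. The second assertion is more formal once one identifies $\phi_I$ as the right isogeny and checks $\ker\phi_I|_{\mathbb C_\infty}$ corresponds to the $I$-torsion; the only subtlety there is the rank-one bookkeeping to see that $\exp_\phi^{-1}(\phi[I])=I^{-1}\Lambda(\phi)$, which follows from $\Lambda(\phi)$ being locally free of rank one over $A$.
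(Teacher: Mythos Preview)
Your treatment of the second assertion is correct and coincides with the paper's: both rest on the identity $\phi_I\exp_\phi=\exp_{\phi^{\sigma_I}}\psi_\phi(I)$ in $H\{\{\tau\}\}$, from which the lattice equality follows by taking kernels. You give more detail (the identification of $\ker\phi_I$ with $\phi[I]$ and the rank-one bookkeeping for $\exp_\phi^{-1}(\phi[I])=I^{-1}\Lambda(\phi)$), which is fine.

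The first assertion, however, has a genuine gap. The relation you aim for, $\alpha^{q^{\deg a}-1}\in\sgn(a)^{-q^{n(\phi)}}K_\infty^\times$, is vacuous as a constraint on the coset of $\alpha$ in $\mathbb C_\infty^\times/K_\infty^\times$: since $\sgn(a)\in\mathbb F_\infty^\times\subset K_\infty^\times$, the right-hand side is just $K_\infty^\times$, so you are only recording that $\alpha^{q^{\deg a}-1}\in K_\infty^\times$. Varying $a$ (and recalling that $\deg a$ is always a multiple of $d_\infty$) gives at best $\alpha^{q^{d_\infty}-1}\in K_\infty^\times$, i.e.\ $\alpha\in K_\infty(\sqrt[q^{d_\infty}-1]{-\pi})$, but never singles out the exponent $-q^{n(\phi)}$ among the $q^{d_\infty}-1$ possible cosets. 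No combination of such relations can do better, because multiplying $\alpha$ by any element of $K_\infty^\times$ leaves every $\alpha^{q^{\deg a}-1}\bmod K_\infty^\times$ unchanged. The analytic ``leading coefficient'' heuristic you sketch therefore cannot by itself identify the specific power of $\sqrt[q^{d_\infty}-1]{-\pi}$.

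The paper proceeds quite differently for this part. It takes a nonzero $J$-torsion point $\lambda_J$ (which generates $\Lambda(\phi)K_\infty$ over $K_\infty$, by \cite{GOS}, Proposition~7.5.16), and then compares two computations of the Galois action on $\lambda_J$ inside the Kummer extension $K_\infty(\sqrt[q^{d_\infty}-1]{-\pi})/K_\infty$: on one hand, local class field theory gives $(x,K_\infty(\sqrt[q^{d_\infty}-1]{-\pi})/K_\infty)(\sqrt[q^{d_\infty}-1]{-\pi})=\sgn(x)^{-1}\sqrt[q^{d_\infty}-1]{-\pi}$; on the other, the explicit Hayes theory (\cite{GOS}, Corollary~7.5.7) gives $(aA,E/K)(\lambda_J)=\sgn(a)^{-q^{n(\phi)}}\lambda_J$ for $a\equiv 1\bmod J$. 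Matching these forces $\lambda_J\in\sqrt[q^{d_\infty}-1]{-\pi}^{-q^{n(\phi)}}K_\infty$, and hence $\Lambda(\phi)\subset\sqrt[q^{d_\infty}-1]{-\pi}^{-q^{n(\phi)}}K_\infty$. The exponent $-q^{n(\phi)}$ thus comes from the Galois-theoretic, not the analytic, side.
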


\begin{proof} Observe that $\Lambda(\phi) K_\infty$ is a $K_\infty$-vector space of dimension one. Let $J$ be a non-zero ideal of $A,$ and let $\lambda_J\not =0$ be a generator of the $A$-module of $J$-torsion points of $\phi.$  By the proof of \cite{GOS}, Proposition 7.5.16, we have:
$$\lambda_J\in \Lambda(\phi)K_\infty.$$
By class field theory (see \cite{GOS}, section 7.5), we have:
$$E:=H(\lambda_J)\subset K_\infty(\sqrt[q^{d_\infty}-1] {-\pi}).$$
Furthermore, by \cite{GOS}, Remark 7.5.17, $$\lambda_J^{q^{d_\infty}-1}\in K_\infty^\times.$$
By local class field theory, for $x\in K_\infty^\times,$ we have:
$$(x,K_{\infty}(\sqrt[q^{d_\infty}-1] {-\pi})/K_\infty)(^{q^{dÐ\infty}-1}\sqrt {-\pi})= \frac{\sqrt[q^{d_\infty}-1] {-\pi}}{\sgn(x)}.$$
By \cite{GOS}, Corollary 7.5.7, for all $a\in K^\times, a\equiv 1\pmod{J},$ we get:
$$(aA,  E/K)(\lambda_J)= \sgn(a)^{-q^{n(\phi)}} \lambda_J.$$
Thus, for all $a\in K^\times, a\equiv 1\pmod{J}:$
$$ (a,K_{\infty}(\sqrt[q^{d_\infty}-1] {-\pi})/K_\infty)(\lambda_J)= \sgn(a)^{q^{n(\phi)}} \lambda_J.$$
Therefore, by the approximation Theorem, we get:
$$\forall x\in K_\infty^\times, \quad (x,K_{\infty}(\sqrt[q^{d_\infty}-1] {-\pi})/K_\infty)(\lambda_J)= \sgn(x)^{q^{n(\phi)}} \lambda_J.$$
It implies:
$$\lambda_J\in \sqrt[q^{d_\infty}-1]{-\pi}^{-q^{n(\phi)}}K_\infty.$$
Hence,
$$\Lambda(\phi)\subset \sqrt[q^{d_\infty}-1]{-\pi}^{-q^{n(\phi)}}K_\infty.$$

\medskip 

The second assertion comes from the fact that we have the following equality in $H\{\{\tau\}\}:$
$$\phi_I \exp_{\phi}= \exp_{\phi^{\sigma_I}} \psi_{\phi}(I).$$
\end{proof}

Set:
$$L= \rho(K)(\mathbb F_\infty)((\sqrt[q^{d_\infty}-1]{-\pi})).$$
Then, by the above Lemma,  $H\subset \mathbb F_\infty((\sqrt[q^{d_\infty}-1]{-\pi}))\subset L .$
Let $v_\infty:L\rightarrow \mathbb Q\cup\{+\infty\}$ be the valuation on $L$ which is trivial on $\rho(K)(\mathbb F_\infty)$ and such that $v_\infty(\sqrt[q^{d_\infty}-1]{-\pi})= \frac{1}{q^{d_\infty}-1}.$
Let $\tau: L\rightarrow L$ be the continuous homomorphism of $\rho(K)$-algebras such that:
$$\forall x\in \mathbb F_\infty((\sqrt[q^{d_\infty}-1]{-\pi})), \quad \tau (x)=x^q.$$
Observe that:
$$\forall x\in L, \quad v_\infty(\tau(x))=qv_\infty(x).$$

\begin{lemma}\label{LemmaS2-3}
We have:
$${\rm Ker} \exp_{\phi}\mid_L=  \Lambda(\phi)\rho(K),$$
where $\Lambda(\phi)\rho(K)$ is the $\rho(K)$-vector space generated by $\Lambda(\phi).$
\end{lemma}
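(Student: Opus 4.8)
The statement asks us to identify the kernel of $\exp_\phi$ on the field $L = \rho(K)(\mathbb F_\infty)((\sqrt[q^{d_\infty}-1]{-\pi}))$ with the $\rho(K)$-vector space $\Lambda(\phi)\rho(K)$ spanned by the period lattice. One inclusion is immediate: since $\Lambda(\phi) \subset \sqrt[q^{d_\infty}-1]{-\pi}^{-q^{n(\phi)}}K_\infty$ by Lemma~\ref{LemmaS2-2}, and $K_\infty = \mathbb F_\infty((\pi))$ with $\pi$ a power of $\sqrt[q^{d_\infty}-1]{-\pi}$, we have $\Lambda(\phi) \subset L$; hence $\Lambda(\phi)\rho(K) \subset {\rm Ker}\,\exp_\phi\mid_L$ because $\exp_\phi$ is $\rho(K)$-linear (its coefficients $e_i(\phi) \in H \subset L$ are fixed by $\tau$ restricted to $\rho(K)$, and $\exp_\phi$ is $\mathbb F_q$-linear, hence $\rho(\mathbb F_q[\theta\text{-type elements}])$... more precisely $\rho(K)$-linear since $\rho(K)$ sits in the constants for the $\tau$ acting on $L$).

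The reverse inclusion is the substance. The plan is to take $z \in L$ with $\exp_\phi(z) = 0$ and show $z \in \Lambda(\phi)\rho(K)$. First I would reduce to the case $z \in \mathbb F_\infty((\sqrt[q^{d_\infty}-1]{-\pi}))$, or rather work with the structure of $L$ as $\rho(K) \otimes$ (something). The key point is that $L$ is obtained from the "geometric" constant field $\mathbb F_\infty((\sqrt[q^{d_\infty}-1]{-\pi}))$ — on which $\exp_\phi$ is the genuine analytic exponential with kernel $\Lambda(\phi)$ sitting inside $\mathbb C_\infty$ — by adjoining the transcendentals $\rho(K)$ which are $\tau$-invariant. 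So the strategy is: view $L$ as a subfield of a large field on which $\exp_\phi$ already has known kernel, namely embed $L \hookrightarrow \mathbb C_\infty$... but that cannot literally work since $\rho(K)$ is transcendental over $\mathbb F_\infty((\sqrt[q^{d_\infty}-1]{-\pi}))$ only in a twisted sense. Instead I would argue directly with $\tau$-difference equations: if $\exp_\phi(z) = 0$ then for each $a \in A$, $\exp_\phi(\rho(a) z) = \phi_a(\exp_\phi(z)) = 0$... wait, that is not right either since $\rho(a)$ does not commute with $\tau$. Rather, use that $\Lambda(\phi)\rho(K)$ is finite-dimensional: $\Lambda(\phi) K_\infty$ is one-dimensional over $K_\infty$, so $\Lambda(\phi)\rho(K)$ has dimension one over $\rho(K)$, equivalently the $\mathbb F_\infty((\sqrt[q^{d_\infty}-1]{-\pi}))$-span of $\Lambda(\phi)$ has dimension one.

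Concretely I would proceed as follows. Write $L = \rho(K) \cdot \mathbb L$ where $\mathbb L = \mathbb F_\infty((\sqrt[q^{d_\infty}-1]{-\pi}))$, a complete discretely valued field on which $\tau$ acts by $x \mapsto x^q$ and on which $\exp_\phi$ is an entire $\mathbb F_q$-linear function with $\ker(\exp_\phi\mid_{\mathbb C_\infty}) = \Lambda(\phi)$, and $\Lambda(\phi) \subset \mathbb L$. The inclusion $\Lambda(\phi)\rho(K) \hookrightarrow \ker \exp_\phi\mid_L$ being established, for the reverse take $z \in L$, $\exp_\phi(z)=0$, $z \neq 0$. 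Since $\exp_\phi$ converges on all of $\mathbb C_\infty$ and $L$ embeds into $\mathbb C_\infty$ as a valued field — this is the point requiring care: $\rho(K)$ does embed into $\mathbb C_\infty$ (e.g. via $t \mapsto$ some transcendental element, or rather $\rho$ is already a map to $F \subset$ things over $\mathbb C_\infty$, but here $\rho(K)(\mathbb F_\infty) \subset L$ abstractly) — one gets $\exp_\phi(z) = 0$ forces $z$ to lie in the $\mathbb C_\infty$-kernel, which is $\Lambda(\phi)$, a subset of $\mathbb L \subset L$. But then $z \in \Lambda(\phi) \subset \Lambda(\phi)\rho(K)$ and we are done. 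The honest obstacle is precisely justifying that $\exp_\phi$, a priori only defined as a power series with coefficients in $H$, converges and is injective-modulo-$\Lambda(\phi)$ on $L$ rather than just on $\mathbb C_\infty$: one must control $v_\infty$ of the coefficients $e_i(\phi)$ (bounded below, growing appropriately so that $\sum e_i(\phi) z^{q^i}$ converges for $v_\infty(z)$ bounded below in $L$), then use that a zero of $\exp_\phi$ in $L$ produces, via the Weierstrass product $\exp_\phi(z) = z\prod_{\lambda \in \Lambda(\phi)\setminus 0}(1 - z/\lambda)$ valid over $\mathbb C_\infty$ and the fact that $L$ sits inside $\mathbb C_\infty$ compatibly with $v_\infty$, the conclusion $z \in \Lambda(\phi)$. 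So the main obstacle is the valuation-theoretic bookkeeping showing $L$ embeds $v_\infty$-isometrically into $\mathbb C_\infty$ and that the entire function $\exp_\phi$ on $\mathbb C_\infty$ restricts to an entire function on $L$ with the same zero set; once that is in place, the kernel statement is formal since $\Lambda(\phi) \subset L$.
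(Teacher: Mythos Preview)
Your approach contains a genuine gap at the step where you propose to embed $L$ isometrically into $\mathbb C_\infty$ and then invoke the Weierstrass product for $\exp_\phi$. No such valuation-preserving embedding exists: the residue field of $L=\rho(K)(\mathbb F_\infty)((\sqrt[q^{d_\infty}-1]{-\pi}))$ with respect to $v_\infty$ is $\rho(K)(\mathbb F_\infty)$, a field of transcendence degree one over $\overline{\mathbb F}_q$, whereas the residue field of $\mathbb C_\infty$ is $\overline{\mathbb F}_q$. Even ignoring valuations, the two actions of $\exp_\phi$ are incompatible: on $L$ the operator $\tau$ is the $\rho(K)$-\emph{linear} endomorphism fixing $\rho(K)$ pointwise, while on $\mathbb C_\infty$ it is the honest $q$-th power map. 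Any field embedding $\rho(K)\hookrightarrow \mathbb C_\infty$ sends the transcendental $\rho(\theta)$ to an element not in $\mathbb F_q$, so the two $\tau$'s (and hence the two $\exp_\phi$'s) do not intertwine. In particular the Weierstrass factorization $\exp_\phi(z)=z\prod_{\lambda\in\Lambda(\phi)\setminus\{0\}}(1-z/\lambda)$, which is an identity of entire functions on $\mathbb C_\infty$, cannot be transported to $L$ this way.

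The paper's argument proceeds instead by nonarchimedean functional analysis entirely within $L$. Using growth estimates on the coefficients of $\log_\phi$, one shows that $\exp_\phi$ is an isometry on a high power $\mathfrak M^N$ of the maximal ideal of the valuation ring of $L$; in particular $\exp_\phi$ has no kernel there. The kernel is then computed first on the Tate-type subring $\mathbb T=\rho(A)[\mathbb F_\infty]((\sqrt[q^{d_\infty}-1]{-\pi}))$ by reducing to the one-variable case $\mathbb F_\infty[\rho(\theta)]((\sqrt[q^{d_\infty}-1]{-\pi}))$ (using that $\rho(A)$ is free of finite rank over $\mathbb F_q[\rho(\theta)]$), and then on the $\rho(K)$-span $V$ of $\mathbb T$. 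Finally, given $x\in L$ with $\exp_\phi(x)=0$, one approximates $x$ by some $y\in V$ modulo $\mathfrak M^N$ and uses the isometry on $\mathfrak M^N$ to force $x$ to coincide with the period part of $y$.
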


\begin{proof} The proof is  standard in non-archimedean functional analysis, we give a sketch of the proof for the convenience of the reader. We have:
$$\Lambda(\phi)\rho(K)\subset {\rm Ker} \exp_{\phi}\mid_L.$$
Let:
$$\mathfrak M=\sqrt[q^{d_\infty}-1]{-\pi}\rho(K)(\mathbb F_\infty)[[\sqrt[q^{d_\infty}-1]{-\pi}]].$$
Let $\log_\phi \in H\{\{\tau\}\}$ such that $\log_\phi\exp_\phi=\exp_\phi\log_\phi=1.$ If we write: $\log_\phi=\sum_{i\geq 0} l_i(\phi)\tau^i,$ then there exists $C\in \mathbb R$ such that, for all $i\geq 0,$  $v_\infty(l_i(\phi))\geq C q^i.$ It implies that there exists an integer $N\geq 0$ such that $\exp_{\phi}$ is an isometry on $\mathfrak M^N.$  

\medskip 

Now, select $\theta\in A\setminus \mathbb F_q.$ Then:
$${\rm Ker}\exp_{\phi}\mid_{\mathbb F_\infty[\rho(\theta)]((\sqrt[q^{d_\infty}-1]{-\pi}))}=\Lambda(\phi)\mathbb F_q[\rho(\theta)].$$
Since $\rho(A)$ is finitely generated and free as an $\mathbb F_q[\rho(\theta)]$-module, it implies:
$${\rm Ker}\exp_\phi\mid_{\rho(A)[\mathbb F_\infty]((\sqrt[q^{d_\infty}-1]{-\pi}))}=\Lambda(\phi)\rho(A).$$
Let $V$ be the $\rho(K)$-vector space generated by $\rho(A)[\mathbb F_\infty]((\sqrt[q^{d_\infty}-1]{-\pi})).$ Then:
$${\rm Ker}\exp_{\phi}\mid_V=\Lambda(\phi)\rho(K).$$
Let $x\in {\rm Ker} \exp_{\phi}\mid_L,$ then there exists $y\in V$ such that:
$$x-y\in \mathfrak M^N.$$
Thus,
$$\exp_{\phi}(y-x)=\exp_{\phi}(y) \in \mathfrak M^N\cap V= \exp_{\phi}(\mathfrak M^N \cap V).$$
Therefore, $y=z+v,$ for some $z\in \mathfrak M^N \cap V,$ and some $v\in \Lambda(\phi)\rho(K).$ It implies that $x-v\in \mathfrak M^N,$ and hence:
$$x=v\in  \Lambda(\phi)\rho(K).$$
\end{proof}

\begin{lemma}\label{LemmaS2-4} We consider the following $\rho(K)$-vector space:
$$V=\bigcap_{a\in A\setminus \mathbb F_q}\exp_{\phi}(\frac{1}{a-\rho(a)}\Lambda(\phi)\rho(K))$$
Then, we have:
$${\rm dim}_{\rho(K)} V=1.$$
\end{lemma}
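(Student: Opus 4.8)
The plan is to reinterpret everything as a statement about modules over the Dedekind domain $R:=\rho(K)\otimes_{\mathbb F_q}A$ (it is Dedekind because $X$, hence $\mathrm{Spec}\,A$, is smooth, so $R$ is smooth of relative dimension one over the field $\rho(K)$). First I would embed $R$ into $L$ by $c\otimes a\mapsto c\,a$; this is injective, since its image is the subring of $L$ generated by $\rho(K)$ and $A$, which has transcendence degree $2$ over $\mathbb F_q$ like the domain $R$. Write $z_a:=1\otimes a-\rho(a)\otimes 1\in R$, so that $z_a$ becomes $a-\rho(a)\in L$ under this embedding; note $z_a\ne 0$ precisely for $a\notin\mathbb F_q$. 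The multiplication map $R\to\rho(K)$, $c\otimes a\mapsto c\rho(a)$, is surjective, and its kernel is the maximal ideal $\mathfrak m$ with $R/\mathfrak m=\rho(K)$; a routine computation with the kernel of a multiplication map shows that $\mathfrak m$ is generated by the elements $z_a$. Finally set $M:=\Lambda(\phi)\rho(K)$. Since $a\Lambda(\phi)\subset\Lambda(\phi)$ for $a\in A$ and $M$ is a $\rho(K)$-vector space, $M$ is an $R$-submodule of $L$, and since $\Lambda(\phi)$ is finitely generated of rank one over $A$, $M$ is a finitely generated torsion-free $R$-module of rank one, hence an invertible $R$-module. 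By Lemma~\ref{LemmaS2-3}, $\exp_\phi$ acts $\rho(K)$-linearly on $L$ with kernel exactly $M$.

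With this in place I would argue as follows. For $a\in A\setminus\mathbb F_q$ one has $a-\rho(a)\in L^\times$ and $\frac1{a-\rho(a)}\Lambda(\phi)\rho(K)=z_a^{-1}M$, an $R$-submodule of $M\otimes_R\mathrm{Frac}(R)$ which contains $M$ (because $z_aM\subset M$). Since $\ker\exp_\phi=M$ lies inside every $z_a^{-1}M$, the elementary identity $\bigcap_i f(U_i)=f(\bigcap_i U_i)$ — valid for a linear map $f$ as soon as $\ker f\subset U_i$ for all $i$ — gives
$$V=\bigcap_{a\in A\setminus\mathbb F_q}\exp_\phi\!\left(z_a^{-1}M\right)=\exp_\phi\!\left(\bigcap_{a\in A\setminus\mathbb F_q}z_a^{-1}M\right),$$
and $\exp_\phi$ induces an injection $\bigl(\bigcap_a z_a^{-1}M\bigr)/M\hookrightarrow L$. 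Hence it suffices to show that $\dim_{\rho(K)}\bigl(\bigcap_a z_a^{-1}M\bigr)/M=1$.

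The computation of this intersection is clean: I claim $\bigcap_{a\in A\setminus\mathbb F_q}z_a^{-1}M=\mathfrak m^{-1}M$. Indeed, $z_a\in\mathfrak m$ gives $z_a\mathfrak m^{-1}\subset\mathfrak m\mathfrak m^{-1}=R$, hence $\mathfrak m^{-1}M\subset z_a^{-1}M$ for every $a$; conversely, if $z_ax\in M$ for all $a$, then $\mathfrak m x=\bigl(\sum_a Rz_a\bigr)x\subset M$, i.e. $x\in(M:\mathfrak m)=\mathfrak m^{-1}M$. Therefore
$$\Bigl(\bigcap_a z_a^{-1}M\Bigr)\big/M=\mathfrak m^{-1}M/M\cong(\mathfrak m^{-1}/R)\otimes_R M\cong(R/\mathfrak m)\otimes_R M=M/\mathfrak m M,$$
using the isomorphism $\mathfrak m^{-1}/R\cong R/\mathfrak m$ and the flatness of the invertible module $M$; and $M/\mathfrak m M$ has dimension one over $R/\mathfrak m=\rho(K)$ because $M$ is invertible. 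This yields $\dim_{\rho(K)}V=1$.

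The one genuinely non‑formal input is that $\mathfrak m$ (equivalently $M$) is an invertible $R$-module: this is where the smoothness of $X$ enters, and without it the isomorphism $\mathfrak m^{-1}/R\cong R/\mathfrak m$ and the final dimension count could fail. Everything else is routine commutative algebra: the injectivity of $R\hookrightarrow L$, the fact that $\mathfrak m$ is generated by the $z_a$, the finite generation and rank‑one torsion‑freeness of $M$ over $R$ (take $A$-generators $\mu_1,\dots,\mu_k$ of $\Lambda(\phi)$ and note $M=\sum_i R\mu_i$ with all $\mu_i/\mu_1\in K\subset\mathrm{Frac}(R)$), and the isomorphism $\mathfrak m^{-1}/R\cong R/\mathfrak m$.
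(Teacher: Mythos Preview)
Your proof is correct and takes a genuinely different route from the paper's. The paper argues concretely: it identifies each $\exp_\phi\!\bigl(\frac{1}{a-\rho(a)}\Lambda(\phi)\rho(K)\bigr)$ with the eigenspace $V_a=\{x\in L:\phi_a(x)=\rho(a)x\}$ of dimension $\deg a$, then picks $\theta,\theta'\in A$ with $K=\mathbb F_q(\theta,\theta')$ and $K/\mathbb F_q(\theta)$ separable, and shows via minimal and characteristic polynomials that $\phi_{\theta'}$ acting on $V_\theta$ has characteristic polynomial $\rho(P_{\theta'}(X))$ with simple roots, so $V_\theta\cap V_{\theta'}$ is one-dimensional; finally it checks that every $\phi_b$ acts on this line by $\rho(b)$, forcing $V=V_\theta\cap V_{\theta'}$. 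Your argument bypasses all of this by working over the Dedekind domain $R=\rho(K)\otimes_{\mathbb F_q}A$: you pull the intersection through $\exp_\phi$ and reduce to the purely algebraic computation $\bigcap_a z_a^{-1}M=\mathfrak m^{-1}M$ with $\dim_{\rho(K)}\mathfrak m^{-1}M/M=1$. The paper's approach has the side benefit of making the eigenspace description $V=\{x:\phi_a(x)=\rho(a)x\ \forall a\}$ explicit (this is used in the proof of Lemma~\ref{LemmaS2-5}), though it falls out of your setup just as easily. Your approach is cleaner, coordinate-free, and isolates the geometric input---smoothness of $X$, hence Dedekindness of $R$ and invertibility of $M$---as the sole nontrivial ingredient.
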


\begin{proof} For any $a \in A,$ we set:
$$V_a=\{ x\in L, \phi_a(x)= \rho(a)x \}.$$
Then, if $a\not \in \mathbb F_q,$ by Lemma \ref{LemmaS2-3}, we have:
$$V_a=\exp_{\phi}(\frac{1}{a-\rho(a)}\Lambda(\phi)\rho(K)),$$
and:
$${\rm dim}_{\rho(K)}V_a= \deg a= [K: \mathbb F_q(a)].$$

\medskip

Select $\theta\in A\setminus \mathbb F_q$ such that $K/\mathbb F_q(\theta)$ is a finite separable extension. Let $b\in A\setminus \mathbb F_q$ and let $P_b(X) \in \mathbb F_q[\theta][X]$ be the minimal polynomial of $b$ over $\mathbb F_q(\theta).$  Since $V_\theta$ is an $A$-module via $\phi$ and $\phi_b$ induces a $\rho(K)$-linear endomorphism of $V_\theta,$ it follows that:
$$\rho(P_b)(\phi_b)=0.$$
This implies that the minimal polynomial of $\phi_b$ viewed as an $\mathbb F_q(\rho(\theta))$-linear endomorphism of $V_\theta$ is $\rho(P_b(X)).$ Observe that $V_\theta$ is the $\rho(K)$-vector space generated by:
$$\exp_{\phi}(\frac{1}{\theta-\rho(\theta)}\Lambda(\phi)\mathbb F_q(\rho(\theta))),$$
and:
$${\rm dim}_{\mathbb F_q(\rho(\theta))}\exp_{\phi}(\frac{1}{\theta-\rho(\theta)}\Lambda(\phi)\mathbb F_q(\rho(\theta)))=\deg \theta.$$
Therefore, $\rho(P_b(X))$ is the minimal polynomial of $\phi_b$ viewed as a $\rho(K)$-linear endomorphism of $V_\theta.$ 

\medskip

Select $\theta'\in A\setminus \mathbb F_q$ such that $K=\mathbb F_q(\theta, \theta').$  Then the characteristic polynomial of $\phi_{\theta'}$ on the $\rho(K)$-vector space $V_\theta$ is $\rho(P_{\theta'}(X)).$ Since $P_{\theta'}(X)$ has simple roots, if $V'=V_{\theta}\cap V_{\theta'},$ we get:
$${\rm dim}_{\rho(K)}V'=1.$$
Now, let $b\in A,$ there exists $x, y\in A[\theta, \theta'],$ such that $b=\frac{x}{y}.$ Let $\lambda_b \in \rho(K)$ such $\phi_b \mid V'$ is the multiplication by $\lambda_b,$ then for any $v \in V' \setminus 0,$ we have:
$$\rho(y)\lambda_b v = \phi_{yb} v= \rho(x) v$$
It follows that:
$$\lambda_b=\rho(b).$$
\end{proof}

Let $\sgn:\rho(K)(\mathbb F_\infty)((\pi))^\times\rightarrow \rho(K)(\mathbb F_\infty)^\times$ be the group homomorphism such that ${\rm Ker} \sgn = \pi^{\mathbb Z}\times (1+\pi\rho(K)(\mathbb F_\infty)[[\pi]]),$ and $\sgn\mid_{\rho(K)(\mathbb F_\infty)^\times}={\rm Id}\mid_{\rho(K)(\mathbb F_\infty)^\times}.$ Let $\pi_* =(\sqrt[q^{d_\infty}-1]{-\pi})^{(q-1)q^{n(\phi)}}.$

\begin{lemma}\label{LemmaS2-5} We have:
\begin{align*}
f\pi_* &\in \rho(K)(\mathbb F_\infty)((\pi)),\\
v_\infty(f)&\equiv -\frac{(q-1)q^{n(\phi)}}{q^{d_\infty}-1}\pmod{(q-1)\mathbb Z},
\end{align*}
 and:
$$N_{\rho(K)(\mathbb F_\infty)/\rho(K)}(\sgn(f\pi_*))=1.$$
\end{lemma}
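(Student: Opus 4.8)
The plan is to analyze the $\bar\infty$-adic expansion of $f$ using the divisor information and the behavior under the Artin map. First I would establish the congruence on $v_\infty(f)$. We know $(f) = V^{(1)} - V + (\xi) - (\bar\infty)$, but $V$ may have $\bar\infty$ in its support when $d_\infty > 1$, so the valuation $v_\infty(f)$ is not immediately $-1$; rather we must track it through the $G$-action. For a principal ideal $I = aA$ we computed in Lemma \ref{LemmaS2-1.1} that $\sigma_I(f) = \sgn(a)^{q^{n(\phi)} - q^{n(\phi)+1}} f$, and more generally $\sigma_I(f) u_I = f u_I^{(1)}$ from Lemma \ref{LemmaS2-1}. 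Comparing $\bar\infty$-adic valuations and using that $\sigma_I$ permutes the places of $\mathbb H$ above $\infty$ in a way governed by $\deg(I) \bmod d_\infty$ (since the decomposition group of $\infty$ is $\mathbb F_\infty^\times/\mathbb F_q^\times$ and $\mathrm{Gal}(H/H_A) \simeq \mathbb F_\infty^\times/\mathbb F_q^\times$, with $\mathrm{Pic}$ acting through degree-zero part), one reads off that $v_\infty(f)$ must be congruent to the ``expected'' value $-\tfrac{(q-1)q^{n(\phi)}}{q^{d_\infty}-1} \bmod (q-1)\mathbb Z$. The cleanest route: $\pi_* = (\sqrt[q^{d_\infty}-1]{-\pi})^{(q-1)q^{n(\phi)}}$ has $v_\infty(\pi_*) = \tfrac{(q-1)q^{n(\phi)}}{q^{d_\infty}-1}$, so the first claim $f\pi_* \in \rho(K)(\mathbb F_\infty)((\pi))$ is equivalent to $v_\infty(f\pi_*) \in \mathbb Z$ together with $f\pi_*$ lying in the right subfield; the valuation statement is then just $v_\infty(f\pi_*) \in (q-1)\mathbb Z$ reinterpreted, i.e. $v_\infty(f) + \tfrac{(q-1)q^{n(\phi)}}{q^{d_\infty}-1} \in (q-1)\mathbb Z$, which is the second displayed claim.

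Next I would pin down that $f\pi_*$ actually lands in $\rho(K)(\mathbb F_\infty)((\pi))$ and not just in $L$. The field $L = \rho(K)(\mathbb F_\infty)((\sqrt[q^{d_\infty}-1]{-\pi}))$ is a totally ramified extension of $\rho(K)(\mathbb F_\infty)((\pi))$ of degree $q^{d_\infty}-1$ with Galois group cyclic, generated by $\tau^{?}$-type maps acting on $\sqrt[q^{d_\infty}-1]{-\pi}$ by roots of unity in $\mathbb F_\infty^\times$. One checks $f\pi_*$ is fixed by this Galois group: the Frobenius-type generator sends $\sqrt[q^{d_\infty}-1]{-\pi} \mapsto \zeta \sqrt[q^{d_\infty}-1]{-\pi}$ for a generator $\zeta$ of the image of $\mathbb F_\infty^\times/\mathbb F_q^\times$, hence $\pi_* \mapsto \zeta^{(q-1)q^{n(\phi)}}\pi_*$; and the compatible action on $f$ (which on $\mathbb H$ is via the inertia/decomposition group at $\bar\infty$, using the relation $\sigma(f) u_I = f u_I^{(1)}$ for $\sigma$ in the decomposition group together with $u_I|_\xi = \psi_\phi(I)$) multiplies $f$ by $\zeta^{q^{n(\phi)} - q^{n(\phi)+1}} = \zeta^{-(q-1)q^{n(\phi)}}$, exactly cancelling. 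So $f\pi_*$ is Galois-invariant, hence lies in $\rho(K)(\mathbb F_\infty)((\pi))$.

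For the norm statement $N_{\rho(K)(\mathbb F_\infty)/\rho(K)}(\sgn(f\pi_*)) = 1$: the quantity $\sgn(f\pi_*) \in \rho(K)(\mathbb F_\infty)^\times$ is, up to the $\rho(K)$-scaling, the leading coefficient $\alpha(f)$ times $\sgn(\pi_*)$-type data. By Lemma \ref{LemmaS2-1.1} we have $\alpha(f) \equiv \iota_{\bar\infty}(u(f)) \pmod{H_A^\times}$ with $u(f) \in B^\times$, and the relation $H = H_A(u(f))$ together with $\tfrac{B^\times}{(B')^\times} \simeq \tfrac{\mathbb F_\infty^\times}{\mathbb F_q^\times}$. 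The norm $N_{\mathbb F_\infty/\mathbb F_q}$ of a generator of $\mathbb F_\infty^\times/\mathbb F_q^\times$, raised to the relevant exponent, lands in $\mathbb F_q^\times$; more precisely $\sgn(f\pi_*)$ modulo $\rho(K)^\times$ is a root of unity in $\mathbb F_\infty^\times$ whose $N_{\mathbb F_\infty/\mathbb F_q}$ is trivial because the power $(q-1)q^{n(\phi)}$ forces it into the kernel of the norm (the norm map $\mathbb F_\infty^\times \to \mathbb F_q^\times$ has kernel of order $\tfrac{q^{d_\infty}-1}{q-1}$, and we need the contribution to sit there). I would make this precise by computing $N_{\rho(K)(\mathbb F_\infty)/\rho(K)}$ directly: it multiplies the exponent by $1 + q + \cdots + q^{d_\infty - 1} = \tfrac{q^{d_\infty}-1}{q-1}$ in the cyclotomic character, and $\zeta^{(q-1)q^{n(\phi)} \cdot \frac{q^{d_\infty}-1}{q-1}} = \zeta^{q^{n(\phi)}(q^{d_\infty}-1)} = 1$, giving the claim.

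The main obstacle I anticipate is keeping the bookkeeping of the two interacting cyclic groups straight: the decomposition group of $\bar\infty$ inside $G = \mathrm{Gal}(\mathbb H/\mathbb K)$ and the Galois group of $L/\rho(K)(\mathbb F_\infty)((\pi))$, both isomorphic to (quotients of) $\mathbb F_\infty^\times$, and verifying that the isomorphism $\iota_{\bar\infty}|_H \in G$ identifies them compatibly with the exponent $q^{n(\phi)}$. Everything hinges on getting the sign/exponent of the root-of-unity twist right — the relation $\sigma_{aA}(f) = \sgn(a)^{q^{n(\phi)} - q^{n(\phi)+1}} f$ is the key input, and I would be careful that the factor $\pi_* = (\sqrt[q^{d_\infty}-1]{-\pi})^{(q-1)q^{n(\phi)}}$ was chosen precisely to absorb $\sgn(a)^{(q-1)q^{n(\phi)}}$-type terms. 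Once the twist cancels, the three assertions fall out together: Galois-invariance gives membership in $\rho(K)(\mathbb F_\infty)((\pi))$, the integrality of $v_\infty(f\pi_*)$ modulo $(q-1)$ gives the valuation congruence, and the cyclotomic-exponent computation gives the norm identity.
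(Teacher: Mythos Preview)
Your approach is genuinely different from the paper's and has two real gaps.

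The paper does \emph{not} argue via Galois invariance of $f\pi_*$ in $L$. Instead it uses the special function constructed in the preceding two lemmas: by Lemma~\ref{LemmaS2-4} there is a nonzero $U$ with $\phi_a(U)=\rho(a)U$ for all $a\in A$, and by Lemma~\ref{LemmaS2-2} this $U$ lies in $(\sqrt[q^{d_\infty}-1]{-\pi})^{-q^{n(\phi)}}\rho(K)(\mathbb F_\infty)((\pi))$. Writing $f$ as a ratio $\sum\rho(a_i)b_i/\sum\rho(a'_k)b'_k$ and using the relation $\sum b_i\phi_{a_i}=\sum b'_k\tau\phi_{a'_k}$ yields $\tau(U)=fU$; the location of $U$ then forces $f\pi_*\in\rho(K)(\mathbb F_\infty)((\pi))$ immediately. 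For the other two claims the paper sets $F=f\pi_*$, $R=U(\sqrt[q^{d_\infty}-1]{-\pi})^{q^{n(\phi)}}$, writes down an explicit infinite product $\alpha=\sqrt[q-1]{-\pi}^{\,i_0}\prod_{i\ge0}(F^{(i)}/\lambda^{(i)}(-\pi)^{q^ii_0})^{-1}$ with $\lambda=\sgn(F)$, checks $\tau(R/\alpha)=\lambda\cdot(R/\alpha)$, and concludes $R/\alpha=\mu\in\rho(K)(\mathbb F_\infty)^\times$. Since $R\in\rho(K)(\mathbb F_\infty)((\pi))$ this forces $\sqrt[q-1]{-\pi}^{\,i_0}$ to have integer valuation, i.e.\ $i_0\equiv0\pmod{q-1}$, and the relation $\tau(\mu)/\mu=\lambda$ gives $N_{\rho(K)(\mathbb F_\infty)/\rho(K)}(\lambda)=1$ by telescoping.

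Your first gap is the claim that the valuation congruence follows from $f\pi_*\in\rho(K)(\mathbb F_\infty)((\pi))$. That membership gives only $v_\infty(f\pi_*)\in\mathbb Z$, not $v_\infty(f\pi_*)\in(q-1)\mathbb Z$; the extra divisibility by $q-1$ is exactly what the paper extracts from the existence of $R=\mu\alpha$ with $\mu$ a unit in $\rho(K)(\mathbb F_\infty)$, and this genuinely uses the special function $U$. Your Galois argument alone cannot see this refinement.

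Your second gap is the norm computation. You implicitly treat $\sgn(f\pi_*)$ as a root of unity in $\mathbb F_\infty^\times$ (up to $\rho(K)^\times$) and then apply a cyclotomic exponent count. But at this stage $\sgn(f\pi_*)$ is only known to lie in $\rho(K)(\mathbb F_\infty)^\times$, and an element of $\rho(K)^\times$ has norm its $d_\infty$-th power, not $1$. The norm identity is not a formal consequence of the root-of-unity bookkeeping; in the paper it comes from the Hilbert~90 relation $\lambda=\tau(\mu)/\mu$, which again traces back to the functional equation $\tau(U)=fU$. Your Galois-descent argument for the first assertion is plausible (modulo some care: the relevant decomposition group is at a place of $H$ above $\infty$, not at $\bar\infty$, and the sign of the local-versus-global Artin symbol must be tracked), but the second and third assertions require input you have not supplied.
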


\begin{proof}${}$\par
\noindent 1) Recall that:
$$V=\bigcap_{a\in A\setminus \mathbb F_q}\exp_{\phi}(\frac{1}{a-\rho(a)}\Lambda(\phi)\rho(K)).$$
By Lemma \ref{LemmaS2-2}, we have:
$$V\subset (\sqrt[q^{d_\infty}-1]{-\pi})^{-q^{n(\phi)}} \rho(K)(\mathbb F_\infty)((\pi)).$$
Thus, by Lemma \ref{LemmaS2-4}, there exists  $U\in (\sqrt[q^{d_\infty}-1]{-\pi})^{-q^{n(\phi)}}\rho(K)(\mathbb F_\infty)((\pi))\setminus \{0\},$  such that:
$$\forall a\in A, \quad \phi_a(U)=\rho (a)U.$$
Write $f=\frac{\sum_i\rho(a_i)b_i}{\sum_k \rho(a_k')b_k'},$ $a_i, a'_k\in A,$ $b_i, b'_k\in B.$ Then, by the proof of Lemma \ref{LemmaS2-1}, we have:
$$\sum_ib_i\phi_{a_i}=\sum_k b'_k \tau \phi_{a'k}.$$
Thus,
$$(\sum_i\rho(a_i)b_i)U= (\sum_k \rho(a_k')b_k')\tau (U).$$
Therefore:
$$\tau(U)= fU.$$
In particular, 
$$\{ x\in L, \tau(x)=fx\}= \rho(K)U.$$
We also get:
$$f\in \pi_*^{-1} \rho(K)(\mathbb F_\infty)((\pi)).$$

\medskip 

\noindent 2)  Let $F=f\pi_*\in \rho(K)(\mathbb F_\infty)((\pi)).$ Set $R= U\,  (\sqrt[q^{d_\infty}-1]{-\pi})^{q^{n(\phi)}}\in \rho(K)(\mathbb F_\infty)((\pi)).$
We have:
$$\tau (R)=F R.$$
Let $i_0=v_\infty(F)\in \mathbb Z ,$ and write:
$$F=\sum_{i\geq i_0} F_i (-\pi) ^i, F_i \in \rho(K)(\mathbb F_\infty).$$
Let $\lambda= F_{i_0}.$ Set:
$$\alpha= \sqrt[q-1]{-\pi}^{i_0}(\prod_{i\geq 0} \frac{F^{(i)}}{\lambda^{(i)}(-\pi)^{q^i i_0}})^{-1}\in L^\times,$$
where $\sqrt[q-1]{-\pi}= (\sqrt[q^{d_\infty}-1]{-\pi})^{\frac{q^{d_\infty}-1}{q-1}}.$ Then clearly:
$$\tau(\alpha)=\frac{F}{\lambda}\alpha.$$
Thus:
$$\tau(\frac{R}{\alpha})= \lambda \frac{R}{\alpha}.$$
This implies:
$$R=\mu \alpha, \mu \in \rho(K)(\mathbb F_\infty)^\times.$$
In particular, $i_0\equiv 0\pmod{q-1},$ i.e. $v_\infty(f)\equiv - \frac{(q-1)q^{n(\phi)}}{q^{d_\infty}-1}\pmod{q-1}.$ Also:
$$\sgn(R)= \mu \sgn(\alpha).$$
Since $\sgn(\alpha) = (-1)^{\frac{i_0}{q-1}},$ we get:
$$\frac{\tau(\mu)}{\mu}= \lambda.$$
\end{proof}

We set: 
$$\mathbb T:=\rho(A)[\mathbb F_\infty]((\sqrt[q^{d_\infty}-1]{-\pi}))\subset L.$$ 
Then $\mathbb T$ is complete with respect to the valuation $v_\infty,$ and:
 $$\{x\in \mathbb T, \tau(x)=x\}=\rho(A).$$
Furthermore, we have (see the proof of Lemma \ref{LemmaS2-3}):
$${\rm Ker} \exp_\phi\mid_{\mathbb T}= \Lambda(\phi)\rho(A).$$
Let $\ev :\rho(A)[\mathbb F_\infty]\rightarrow \overline{\mathbb F}_q\subset \mathbb C_\infty$ be a homomorphism of $\mathbb  F_\infty$-algebras.  Such a homomorphism  induces a continuous homomorphism $\mathbb F_\infty((\sqrt[q^{d_\infty}-1]{-\pi}))$-algebras: $$\ev:  \mathbb  T \rightarrow \mathbb C_\infty.$$ We denote by $\mathcal E$ the set of such continuous homomorphisms from $\mathbb T$ to $\mathbb C_\infty.$
\begin{proposition}\label{PropositionS2-1} We have:
$$f\in \mathbb T^\times,$$
$$\sgn(f\pi_*)\in \rho(A)[\mathbb F_\infty]^\times.$$
Furthermore there exists $U\in \mathbb T\setminus \{0\}$ such that:
$$\{ x\in L, \tau (x)=fx\}= U\rho(K).$$
If $d_\infty=1,$ then $\sgn (f\pi_*)=1,$ and we can take:
$$U=\sqrt[q^{d_\infty}-1]{-\pi}^{-1}\, \sqrt[q-1]{-\pi}^{i_0}(\prod_{i\geq 0} \frac{(f\pi_*)^{(i)}}{(-\pi)^{q^i i_0}})^{-1} \in \mathbb T^\times,$$
where $i_0:=v_\infty(f\pi_*).$ 
\end{proposition}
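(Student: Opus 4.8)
The plan is to deduce everything from Lemma \ref{LemmaS2-5} together with the functional-analytic input recorded just before the Proposition. First I would establish $f \in \mathbb{T}^\times$. By Lemma \ref{LemmaS2-5} we already know $f\pi_* \in \rho(K)(\mathbb{F}_\infty)((\pi))$, and in fact one must check it lies in $\rho(A)[\mathbb{F}_\infty]((\sqrt[q^{d_\infty}-1]{-\pi}))$: this follows because $f \in W(B)$ has coefficients in $B \subset \mathbb{F}_\infty((\sqrt[q^{d_\infty}-1]{-\pi}))$ when expanded at $\bar\infty$ via $\iota_{\bar\infty}$, and $W(B)$ is a $\rho(A)[\mathbb{F}_\infty]$-module, so $f\pi_*$ in fact lies in $\rho(A)[\mathbb{F}_\infty]((\pi))\subset \mathbb{T}$. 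To see $f$ is a \emph{unit} in $\mathbb{T}$, note the divisor of $f$ is $V^{(1)}-V+(\xi)-(\bar\infty)$, which is supported away from $\xi$ and involves $\bar\infty$ with multiplicity $-1$ only through its leading term; concretely, $\sgn_{\bar\infty^{(-1)}}$-considerations and the equation $\tau(U)=fU$ with $U \in \mathbb{T}\setminus\{0\}$ force $v_\infty$ of the leading coefficient $\sgn(f\pi_*)$ to be zero, and since that coefficient lies in the integral domain $\rho(A)[\mathbb{F}_\infty]$ which is finite-dimensional over $\rho(\mathbb{F}_q)[\mathbb{F}_\infty]$... — more cleanly, I would argue that $\sgn(f\pi_*) \in \rho(A)[\mathbb{F}_\infty]$ is a unit by computing its norm: by Lemma \ref{LemmaS2-5}, $N_{\rho(K)(\mathbb{F}_\infty)/\rho(K)}(\sgn(f\pi_*)) = 1$, and this norm is the product of the $\ev$-images over the appropriate conjugates, forcing $\ev(\sgn(f\pi_*)) \neq 0$ for every $\ev \in \mathcal{E}$; since $\rho(A)[\mathbb{F}_\infty]$ is a finite product of fields localized appropriately, nonvanishing under all evaluations implies invertibility, hence $\sgn(f\pi_*) \in \rho(A)[\mathbb{F}_\infty]^\times$ and therefore $f\pi_* \in \mathbb{T}^\times$, whence $f \in \mathbb{T}^\times$ since $\pi_*$ is visibly a unit.

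Next I would produce $U$. From Lemma \ref{LemmaS2-5}(1) we have $\{x \in L : \tau(x) = fx\} = \rho(K)U$ for some $U \in (\sqrt[q^{d_\infty}-1]{-\pi})^{-q^{n(\phi)}}\rho(K)(\mathbb{F}_\infty)((\pi))\setminus\{0\}$, constructed there as $\exp_\phi$ applied to a period-related element, i.e.\ $U = R\,(\sqrt[q^{d_\infty}-1]{-\pi})^{-q^{n(\phi)}}$ with $R = \mu\alpha$, $\mu \in \rho(K)(\mathbb{F}_\infty)^\times$. I would then replace $U$ by $U/\mu$ so that the new solution has leading coefficient in $\rho(\mathbb{F}_q)$ up to sign, and check the resulting $U$ actually lies in $\mathbb{T}$: this uses that $\Lambda(\phi)\rho(A)$-scaled solutions remain in $\mathbb{T} = \rho(A)[\mathbb{F}_\infty]((\sqrt[q^{d_\infty}-1]{-\pi}))$ because $\alpha$ as defined in the proof of Lemma \ref{LemmaS2-5} is an infinite product whose $i$-th factor is $(f\pi_*)^{(i)}/(\lambda^{(i)}(-\pi)^{q^i i_0})$, each factor lying in $\mathbb{T}$ (here we use $f\pi_* \in \rho(A)[\mathbb{F}_\infty]((\pi))$ just established), and the product converges $v_\infty$-adically since the factors tend to $1$. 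Thus $U \in \mathbb{T}\setminus\{0\}$ and $\{x \in L : \tau(x)=fx\} = U\rho(K)$.

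Finally, for the case $d_\infty = 1$: here $\rho(A)[\mathbb{F}_\infty] = \rho(A)$ (as $\mathbb{F}_\infty = \mathbb{F}_q$), the extension $\rho(K)(\mathbb{F}_\infty)/\rho(K)$ is trivial, so the norm condition of Lemma \ref{LemmaS2-5} reads $\sgn(f\pi_*) = 1$ directly (a unit of $\rho(A)$ with trivial norm to itself, but more precisely $\tau(\mu)/\mu = \lambda = \sgn(f\pi_*)$ with $\mu \in \rho(\mathbb{F}_q)^\times$ forces $\lambda = 1$). With $\sgn(f\pi_*) = 1$ we have $\lambda = F_{i_0} = 1$ in the notation of Lemma \ref{LemmaS2-5}, so $\alpha = \sqrt[q-1]{-\pi}^{\,i_0}\big(\prod_{i\geq 0}(f\pi_*)^{(i)}(-\pi)^{-q^i i_0}\big)^{-1}$ and $R = \alpha$ up to $\rho(\mathbb{F}_q)^\times$, giving the displayed formula for $U = \sqrt[q^{d_\infty}-1]{-\pi}^{-1}\alpha$ (noting $q^{n(\phi)}=1$ as $n(\phi)=0$ when $d_\infty=1$); membership $U \in \mathbb{T}^\times$ is then immediate since every factor is a unit. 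I expect the main obstacle to be the convergence and $\mathbb{T}$-integrality bookkeeping for the infinite product defining $\alpha$ — in particular verifying $v_\infty\big((f\pi_*)^{(i)}/(\lambda^{(i)}(-\pi)^{q^i i_0}) - 1\big) \to \infty$, which requires controlling the $\bar\infty$-adic expansion of $f\pi_*$ and the growth $v_\infty \circ \tau^i = q^i v_\infty$, and the passage from "nonvanishing under all $\ev \in \mathcal{E}$" to "unit in $\rho(A)[\mathbb{F}_\infty]$", which needs the finiteness and reducedness of that ring over $\rho(\mathbb{F}_q)[\mathbb{F}_\infty]$.
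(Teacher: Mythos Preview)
Your proposal has two genuine gaps that the paper's proof handles differently.

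\textbf{(1) Showing $f\in\mathbb T$.} Your argument (``$f\in W(B)$ has coefficients in $B$ when expanded at $\bar\infty$ via $\iota_{\bar\infty}$'') conflates two distinct completions. The map $\iota_{\bar\infty}$ lands in $H((1/t))$, an expansion in the \emph{$t$-direction}; membership in $\mathbb T=\rho(A)[\mathbb F_\infty]((\sqrt[q^{d_\infty}-1]{-\pi}))$ is about the \emph{$\pi$-direction}, with coefficients in $\rho(A)[\mathbb F_\infty]$. Knowing the $\bar\infty$-adic coefficients lie in $H$ says nothing about the $\pi$-adic coefficients lying in $\rho(A)[\mathbb F_\infty]$. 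The paper instead argues geometrically: for every $\ev\in\mathcal E$ (equivalently, every maximal ideal of $\rho(A)[\mathbb F_\infty]$) the corresponding point of $\bar X$ is not a pole of $f$, by Thakur's \cite{THA2}, Lemma~1.1; hence $\ev(f)$ is well-defined, which forces $f\in\mathbb T$. You also assert $\rho(A)[\mathbb F_\infty]$ is ``finite-dimensional over $\rho(\mathbb F_q)[\mathbb F_\infty]$'' and ``a finite product of fields''; both are false --- it is an infinite-dimensional Dedekind domain. The norm condition $N_{\rho(K)(\mathbb F_\infty)/\rho(K)}(\sgn(f\pi_*))=1$ gives unitness directly: the inverse of $\sgn(f\pi_*)$ is a product of its Galois conjugates, all in $\rho(A)[\mathbb F_\infty]$, so the inverse is integral.

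\textbf{(2) Producing $U\in\mathbb T$.} Your step ``replace $U$ by $U/\mu$'' destroys the functional equation: with the notation of Lemma~\ref{LemmaS2-5}, one has $\tau(\alpha)=(F/\lambda)\alpha$, so $\alpha\,(\sqrt[q^{d_\infty}-1]{-\pi})^{-q^{n(\phi)}}$ satisfies $\tau(x)=fx$ only when $\lambda=\sgn(f\pi_*)=1$. In general $\lambda\neq 1$, and dividing by $\mu\in\rho(K)(\mathbb F_\infty)^\times$ is not dividing by an element of $\rho(K)$, so you leave the $\rho(K)$-line of solutions. The paper's fix is the missing ingredient: once $\lambda\in\rho(A)[\mathbb F_\infty]^\times$, Hilbert~90 for the cyclic extension $\rho(K)(\mathbb F_\infty)/\rho(K)$ yields $\mu\in\rho(A)[\mathbb F_\infty]\setminus\{0\}$ (not merely in $\rho(K)(\mathbb F_\infty)^\times$) with $\tau(\mu)/\mu=\lambda$. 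With this integral $\mu$ one keeps $U=\mu\,\alpha\,(\sqrt[q^{d_\infty}-1]{-\pi})^{-q^{n(\phi)}}$, which now lies in $\mathbb T$ since $\mu\in\rho(A)[\mathbb F_\infty]\subset\mathbb T$ and $\alpha\in\mathbb T^\times$. Your treatment of the case $d_\infty=1$ is fine, precisely because there $\rho(K)(\mathbb F_\infty)=\rho(K)$ and the norm condition is literally $\lambda=1$.
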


\begin {proof} Recall that $f\in \mathbb H\subset L.$  Le $P$ be a point in $\bar X(\overline{\mathbb F}_q)$ above a maximal ideal of $\rho(A).$  Then $P$ is above a maximal ideal of $\rho(A)[\mathbb F_\infty]$ which can be viewed as the kernel of some homomorphism of $\mathbb F_\infty$-algebras $\ev:\rho(A)[\mathbb F_\infty]\rightarrow \overline{\mathbb F}_q.$  Since the field of constants of $H$ is $\mathbb F_\infty,$ we deduce that $\ev$ can be  uniquely extended to a homomorphism of $H$-algebras:
$$\ev: \rho(A)[H]\rightarrow \mathbb C_\infty.$$
Furthermore, the kernel of the above homomorphism corresponds to $P\cap \mathbb H$ (recall that $\mathbb H={\rm Frac}(\rho(A)[H])$).
Then $\ev$ extends to a continuous homomorphism of  $\mathbb F_\infty((\sqrt[q^{d_\infty}-1]{-\pi}))$-algebras: $$\ev:  \mathbb  T \rightarrow \mathbb C_\infty.$$
We deduce that, by \cite{THA2}, Lemma 1.1, for any $\ev\in \mathcal E,$ $\ev(f)$ is well-defined.
Thus  $f\in \mathbb T.$ Therefore, by Lemma \ref{LemmaS2-5}, we have:
$$f\in \pi_*^{\mathbb Z} \times (\sgn(f\pi_*)+\pi\rho(A)[\mathbb F_\infty][[\pi]]),$$
where $\sgn(f\pi_*)\in \rho(A)[\mathbb F_\infty]$ is such that:
$$N_{\rho(K)(\mathbb F_\infty)/\rho(K)}(\sgn(f\pi_*))=1.$$
Thus:
$$\sgn(f\pi_*)\in \rho(A)[\mathbb F_\infty]^\times,$$
and  there exists $\mu\in \rho(A)[\mathbb F_\infty]\setminus\{0\}$  such that:
$$\sgn(f\pi_*)=\frac{\tau(\mu)}{\mu}.$$
In particular, $f\in \mathbb T^\times.$ Furthermore, there exists a non-zero ideal $I$ of $A$ such that:
 $$\mu \rho(A)[\mathbb F_\infty]= \rho(I) \rho(A)[\mathbb F_\infty].$$
 Now, we use the proof of Lemma \ref{LemmaS2-5}. We put $i_0=v_\infty(f\pi_*)$ (observe that $i_0 \equiv 0\pmod{q-1}$) and set:
 $$U = \mu \alpha \, \sqrt[q^{d_\infty}-1]{-\pi}^{-q^{n(\phi)}},$$ where :
 $$\alpha =\sqrt[q-1]{-\pi}^{i_0}(\prod_{i\geq 0} \frac{(f\pi_*)^{(i)}}{\sgn(f\pi_*)^{(i)}(-\pi)^{q^i i_0}})^{-1}
 \in \mathbb T^\times.$$
 Then:
 $$\tau(U)=fU,$$
 $$U\in \mathbb  T.$$
 Note that $U$ is well-defined modulo $\rho(K)^\times$ and if $d_\infty=1,$ then $U\in \mathbb T^\times.$
\end{proof}

\begin{definition}
A non-zero element in $\{x\in L,\tau(x)=fx\}$ will be called a \emph{special function} attached to the shtuka function $f.$
\end{definition}

\begin{remark}\label{RemakS2-1} Let $M=\{x\in \mathbb T,\tau(x)=fx\}.$ Then, by the above Proposition,  there exists $U\in \mathbb T\setminus\{0\}$ such that:
$$U\rho(A)\subset M\subset U\rho(K).$$
Furthermore (see the proof of Lemma \ref{LemmaS2-5}):
$$M=\bigcap_{a\in A\setminus\mathbb F_q} \exp_\phi(\frac{1}{a-\rho(a)}\Lambda(\phi) \rho(A)).$$
Thus $M$ is a finitely generated $\rho(A)$-module of rank one. When $d_\infty=1,$ the above Proposition tells us that $M$ is a free $\rho(A)$-module. In general, we have:
$$M=U'\rho(\mathcal B),$$
where $\mathcal B\in \mathcal I(A), U'\in L^\times,$ and  $M=U''\rho(\mathcal B')$ if and only if $U'= xU''$ where $x\in \rho(K)^\times $ is such that $x\mathcal B=\mathcal B'.$\par
Let $I$ be a non-zero ideal of $A,$ and let   $\sigma=\sigma_I \in G.$ Recall that, by Lemma \ref{LemmaS2-1}, we have:
$$\sigma(f) =f\frac{\tau(u_I)}{u_I}.$$
Now observe that $u_I\in \mathbb T,$ $\frac{\tau(u_I)}{u_I}\in \mathbb T^\times,$ but in general we don't have $u_I\in \mathbb T^\times.$ By Lemma \ref{LemmaS2-1}, we have:
$$\frac{u_I} {\rho(x_I)}\in \mathbb T^\times,$$
where $I^n= x_IA,$ $n$ being the order of $I$ in ${\rm Pic}(A).$
Thus:
$$M_{\sigma}:=\{x\in \mathbb T,\tau(x)=\sigma(f)x\}=\frac{\rho(x_I)}{u_I}M.$$
We leave open the following question: is $M$ a free $\rho(A)$-module ? We will show in section \ref{sectiong} that the answer is positive if $g=0.$
\end{remark}
\subsection{The period $\tilde \pi$} ${}$\par
By Lemma \ref{LemmaS1-2}, and Lemma \ref{LemmaS2-2}, let $f$ be the  unique  shtuka function in $\Sht$ such that,  if $\phi$ is the Drinfeld module associated to $f,$ we have:
$${\rm Ker} \exp_\phi\mid_ L =\widetilde{\pi} A[\rho(A)],$$
where $\widetilde{\pi} \in\sqrt[q^{d_\infty}-1]{-\pi}^{-q^{n(\phi)}}K_\infty,$  $\sgn(\widetilde{\pi}\,(  \sqrt[q^{d_\infty}-1]{-\pi})^{q^{n(\phi)}})=1.$
\begin{proposition}\label{PropositionS2-2} There exists $\theta\in A\setminus\mathbb F_q,$ $a\in A[\rho(A)] ,$ and a special function $U\in \mathbb T,$ such that for all $i\geq 0 :$
$$\frac{\rho(\theta)-\theta^{q^i}}{a^{(i)}}U\mid_{\xi^{(i)}}= e_i(\phi)\widetilde{\pi}^{q^i}.$$
In particular, for any special function $U'$ associated to $f,$ we have :
$$\forall i\geq 0, \quad f^{(i)}U'\mid_{\xi^{(i)}}\in \widetilde{\pi}^{q^i} H.$$
\end{proposition}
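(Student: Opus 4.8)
The plan is to produce, starting from the exponential function $\exp_\phi$, a special function $U$ whose evaluations at the points $\xi^{(i)}$ are controlled by the coefficients $e_i(\phi)$ and powers of $\widetilde\pi$, and then to deduce the stated integrality for an arbitrary special function by the rank-one structure of $M=\{x\in\mathbb T,\ \tau(x)=fx\}$.

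First I would fix $\theta\in A\setminus\mathbb F_q$ such that $K/\mathbb F_q(\theta)$ is finite separable, and consider the element
$$z=\frac{\widetilde\pi}{\rho(\theta)-\theta}\in L,$$
which lies in $\frac{1}{\theta-\rho(\theta)}\Lambda(\phi)\rho(K)$ since $\widetilde\pi$ generates $\Lambda(\phi)$ over $A$ (here I use the normalization ${\rm Ker}\exp_\phi\mid_L=\widetilde\pi\,A[\rho(A)]$ of the chosen shtuka function $f$, together with Lemma \ref{LemmaS2-3}). Applying $\exp_\phi$ and using the characterization of $V$ in Lemma \ref{LemmaS2-4} (and, more precisely, the fact established in the proof of Lemma \ref{LemmaS2-5} that $\{x\in L,\tau(x)=fx\}=\rho(K)U$ for a nonzero $U$ satisfying $\phi_a(U)=\rho(a)U$ for all $a\in A$), I would identify a canonical special function $U$ with
$$U=\exp_\phi\!\left(\frac{\widetilde\pi}{\rho(\theta)-\theta}\right)$$
up to a $\rho(K)^\times$-scalar; adjusting by such a scalar and clearing denominators, I can find $a\in A[\rho(A)]$ so that $\frac{\rho(\theta)-\theta}{a}U=\exp_\phi\big(\frac{\widetilde\pi}{?}\big)$ is in the image of $\exp_\phi$ evaluated on a lattice element. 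The key computational input is then the series expansion $\exp_\phi(z)=\sum_{j\ge0}e_j(\phi)z^{q^j}$ combined with the formula $e_i(\phi)=\big(f\cdots f^{(i-1)}\mid_{\xi^{(i)}}\big)^{-1}$ from the excerpt, and the compatibility of $\tau$ with specialization at $\xi$: applying $\tau^i$ to the defining relation $\tau(U)=fU$ gives $U^{(i)}=f^{(i-1)}\cdots f^{(1)}f\,U$ up to the appropriate product, so that $U\mid_{\xi^{(i)}}$ and $U\mid_\xi$ differ exactly by $f\cdots f^{(i-1)}\mid_{\xi^{(i)}}=e_i(\phi)^{-1}$. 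This is what converts the single normalization at $\xi$ into the family of identities indexed by $i$.

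Concretely, the heart of the argument is: (i) show $\tau^i$ applied to $\tau(U)=fU$ yields $U\mid_{\xi^{(i)}} = \big(f\cdots f^{(i-1)}\mid_{\xi^{(i)}}\big)\cdot(U\mid_\xi)^{q^i}$ after tracking how $\tau$ acts on the value at $\xi$ versus $\xi^{(i)}$ (using $x^{(m)}\mid_{P^{(m)}}=(x\mid_P)^{q^m}$ and the divisor computation $(f\cdots f^{(i-1)})=V^{(i)}-V+\sum_{k=0}^{i-1}(\xi^{(k)})-\sum_{k=0}^{i-1}(\bar\infty^{(k)})$ from the proof of Lemma \ref{LemmaS2-1}, noting $\xi\notin\mathrm{supp}(V)$ so the evaluations make sense); (ii) normalize $U$ so that $U\mid_\xi$ is a prescribed multiple of $\widetilde\pi$ — the choice of the auxiliary factor $\frac{\rho(\theta)-\theta^{q^i}}{a^{(i)}}$ is precisely designed so that, after applying $\tau^i$, the combination $\frac{\rho(\theta)-\theta^{q^i}}{a^{(i)}}U\mid_{\xi^{(i)}}$ equals $e_i(\phi)\widetilde\pi^{q^i}$; this forces a relation between $a$, the exponential coefficients, and $U\mid_\xi$ at level $i=0$, which one then checks propagates to all $i$ by the $\tau$-equivariance. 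I would verify the $i=0$ case directly — it says $\frac{\rho(\theta)-\theta}{a}U\mid_\xi=\widetilde\pi$, which pins down $a$ given $U$ — and then use step (i) to get all higher $i$ for free.

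For the last sentence of the statement, let $U'$ be an arbitrary special function attached to $f$. By Remark \ref{RemakS2-1}, $M$ is a rank-one $\rho(A)$-module with $U\rho(A)\subset M\subset U\rho(K)$, so $U'=cU$ for some $c\in\rho(K)^\times\subset F^\times$; in particular $U'\mid_{\xi^{(i)}}=c^{(i)}(U\mid_{\xi^{(i)}})$ (here $c\in\rho(K)$ is a constant, so $c\mid_{\xi^{(i)}}\in\mathbb C_\infty$, and more carefully $c^{(i)}\mid_{\xi^{(i)}}=(c\mid_\xi)^{q^i}$). Then
$$f^{(i)}U'\mid_{\xi^{(i)}} = c^{(i)}\,f^{(i)}U\mid_{\xi^{(i)}} = c^{(i)}\,\frac{a^{(i)}}{\rho(\theta)-\theta^{q^i}}\Big(\tfrac{\rho(\theta)-\theta^{q^i}}{a^{(i)}}U\Big)\!\mid_{\xi^{(i)}}\cdot\frac{f^{(i)}(\rho(\theta)-\theta^{q^i})}{a^{(i)}}\ \text{(rearranged)},$$
and substituting the main identity shows $f^{(i)}U'\mid_{\xi^{(i)}}$ is $\widetilde\pi^{q^i}$ times an element of $H$, since $e_i(\phi)$, $a$, $f$, $\theta$, $\rho(\theta)$ and $c$ all have coordinates in $H$ when evaluated at $\xi^{(i)}\in\bar X(\overline K)$ (using $e_i(\phi)\in H$, $\phi_a\in B\{\tau\}$, and $H\subset\mathbb C_\infty$), and $\widetilde\pi^{q^i}$ is a common factor.

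The main obstacle I anticipate is step (i)–(ii): carefully bookkeeping how the single period $\widetilde\pi$ and the evaluation-at-$\xi$ functional interact under repeated application of $\tau$ — in particular making the choice of $a$ and the rational factors $\frac{\rho(\theta)-\theta^{q^i}}{a^{(i)}}$ consistent across all $i$ simultaneously, rather than just for $i=0$. One must check that the element $a$ producing the normalization at $i=0$ automatically works for all $i$; this relies on the fact that $\exp_\phi\big(\frac{\widetilde\pi}{\rho(\theta)-\theta}\big)$ is a special function (so its $\tau$-translates are governed by $f$) and on the $\mathbb F_q[\rho(A)]$-linearity of $\exp_\phi$ restricted to $\frac{1}{\theta-\rho(\theta)}\Lambda(\phi)\rho(A)$, which is where Lemma \ref{LemmaS2-4} and the structure of $M$ in Remark \ref{RemakS2-1} do the real work. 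Everything else is formal manipulation with divisors and Frobenius twists.
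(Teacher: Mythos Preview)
Your proposal has two genuine gaps.

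\medskip

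\textbf{First gap: the element $\exp_\phi\bigl(\tfrac{\widetilde\pi}{\rho(\theta)-\theta}\bigr)$ is not a special function in general.} You claim this equals a special function up to a $\rho(K)^\times$-scalar, but that is only true when $K=\mathbb F_q(\theta)$. In general this element lies in $V_\theta=\{x:\phi_\theta(x)=\rho(\theta)x\}$, which has $\rho(K)$-dimension $\deg\theta>1$, whereas the space of special functions is the $1$-dimensional intersection $V=\bigcap_a V_a$. The paper chooses $a\in A[\rho(A)]$ \emph{before} constructing $U$, not after: one picks $y$ with $K=\mathbb F_q(\theta,y)$, lets $P(X)$ be the minimal polynomial of $y$ over $\mathbb F_q(\theta)$, and sets $a=\tfrac{P(X)}{X-y}\big|_{X=\rho(y)}$. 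Then $U:=\exp_\phi\bigl(\tfrac{a}{\rho(\theta)-\theta}\widetilde\pi\bigr)$ satisfies $\phi_y(U)=\rho(y)U$ precisely because $(y-\rho(y))\tfrac{a}{\rho(\theta)-\theta}=-\tfrac{P(\rho(y))}{\rho(\theta)-\theta}\in A[\rho(A)]$. Your vague ``adjust by a scalar and clear denominators'' does not produce this.

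\medskip

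\textbf{Second gap: $U\mid_{\xi^{(i)}}$ does not exist.} Your step (i) proposes to evaluate $U$ at $\xi^{(i)}$ and propagate from $i=0$ via $\tau$-equivariance. But the very statement of the Proposition says $\tfrac{\rho(\theta)-\theta^{q^i}}{a^{(i)}}U\mid_{\xi^{(i)}}=e_i(\phi)\widetilde\pi^{q^i}\neq 0$; since $\rho(\theta)-\theta^{q^i}$ has a simple zero at $\xi^{(i)}$, this forces $U$ to have a simple \emph{pole} there. (Concretely, in the series $U=\sum_{j\ge0}\delta^{(j)}e_j(\phi)\widetilde\pi^{q^j}$ with $\delta=\tfrac{a}{\rho(\theta)-\theta}$, the $j$-th term contributes the pole at $\xi^{(j)}$.) So your identity $U\mid_{\xi^{(i)}}=e_i(\phi)(U\mid_\xi)^{q^i}$ is a relation between undefined quantities, and any attempt to salvage it by inserting the factor $\rho(\theta)-\theta^{q^i}$ runs into $0\cdot\infty$ indeterminacies when you try to split the product.

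The paper does \emph{not} use $\tau$-equivariance to pass from $i=0$ to general $i$. Instead it evaluates each $i$ independently and directly: multiplying the series for $U$ by $(\delta^{-1})^{(i)}=\tfrac{\rho(\theta)-\theta^{q^i}}{a^{(i)}}$ makes the $j=i$ term equal to $e_i(\phi)\widetilde\pi^{q^i}$ (a constant in the $\rho$-variables) and gives every other term a factor $\rho(\theta)-\theta^{q^i}$ that kills it upon evaluation at $\xi^{(i)}$. The identity for all $i$ thus drops out of the single series expansion, with no induction or propagation needed. The ``in particular'' clause then follows immediately since $f^{(i)}$ has a zero of order at least one at $\xi^{(i)}$, cancelling the simple pole of any special function there.
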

\begin{proof}

Let $ \mathbb A= A[\rho(K)].$  We  still denote by $\rho$ the obvious $\rho(K)$-linear map $\mathbb A\rightarrow \rho(K).$ We observe that:
$${\rm Ker}\rho =\sum_{a\in A} (a-\rho(a))\mathbb A.$$
We also observe that there exists $\theta\in A\setminus\mathbb F_q$ such that $\rho(\theta)-\theta\in {\rm Ker}\rho \setminus ({\rm Ker}\rho)^2.$ Set $z= \rho(\theta).$  Then   $z-\theta$ has a zero of order  one at $\xi$ (observe that $z-\theta^{q^i}$ has a zero of order one  at $\xi^{(i)}$). Note that $K/\mathbb F_q(\theta)$ is a finite separable extension, therefore there exists $y\in A$ such that $K=\mathbb F_q(\theta, y).$ Let $P(X)\in \mathbb F_q[\theta][X]$ be the minimal polynomial of $y$ over $\mathbb F_q(\theta)$ and set:
$$a=\frac{P(X)}{X-y}\mid_{X=\rho(y)}\in A[\rho(A)]\subset \mathbb A.$$
Since $P(X)$ has a zero of order one at $y,$ we have:
$$a\not \in {\rm Ker}\rho.$$
Let's set:
$$U=\exp_\phi(\frac{a}{z-\theta}\widetilde{\pi})\in \mathbb T.$$
Since $\frac{a}{z-\theta}\not \in \mathbb A,$ we have:
$$U\not =0.$$
Furthermore, observe that $\mathbb F_q[\theta, y]\subset A\subset {\rm Frac}(\mathbb F_q[\theta,y]).$ Thus:
$$\forall b\in A, \quad \phi_b(U)=\rho(b)U.$$
We conclude that:
$$U\in (\{ x\in L, \tau(x)=fx\}\setminus\{0\})\cap \mathbb T.$$
Let's set:
$$\delta=\frac{a}{z-\theta}.$$
We have:
$$U=\sum_{i\geq 0} \delta^{(i)} e_i(\phi) \widetilde{\pi}^{q^i}.$$
We therefore get:
$$\forall i\geq 0, \quad (\delta^{-1})^{(i)}U\mid_{\xi^{(i)}}= e_i(\phi)\widetilde{\pi}^{q^i}.$$

 The last assertion comes from the fact that $f^{(i)}$ has a zero of order at least  one at $\xi^{(i)}.$

 \end{proof}
 We refer the reader to \cite{AND} for the explicit construction of $f$ in the case $d_\infty=1,$ and to \cite{GRE&PAP} for the explicit construction of the special functions attached to $f$ in the case $g=1$ and $d_\infty=1.$
 \section{A basic example: the case $g=0$}\label{sectiong}
 
 \medskip 
 
In this section, we assume that the genus of $K$ is zero. Let's select $x\in K$ such that $K=\mathbb  F_q(x)$ and $v_\infty(x)=0.$ Let $P_\infty(x)\in \mathbb  F_q[x]$ be the monic irreducible polynomial corresponding to $\infty,$ then $\deg_xP_\infty(x)=d_\infty.$ Let $\sgn:K_\infty^\times \rightarrow \mathbb F_\infty^\times$ be the sign function such that $\sgn(P_{\infty}(x))=1.$ Then:
 $$A=\{ \frac{f(x)}{P_\infty(x)^k}, k\in \mathbb N, f(x)\in \mathbb F_q[x], f(x)\not \equiv 0\pmod{P_\infty(x)}, \deg_x(f(x))\leq kd_\infty\}.$$
 Observe that:
 $${\rm Pic}(A)\simeq \frac{\mathbb Z}{d_\infty\mathbb Z}.$$
 Let $P$ be the maximal ideal of $A$ which corresponds to the pole of $x,$ i.e. $P=\{\frac{f(x)}{P_\infty(x)^k}, k\in \mathbb N, f(x)\in \mathbb F_q[x], f(x)\not \equiv 0\pmod{P_\infty(x)}, \deg_x(f(x))< kd_\infty\},$ the order of $P$ in ${\rm Pic}(A)$ is exactly $d_\infty,$ and $P^{d_\infty}=\frac{1}{P_\infty(x)} A.$ We also observe that the Hilbert class field of $A$ is $K(\mathbb F_\infty).$  Let $\zeta= \sgn(x) \in \mathbb F_\infty^\times.$ Then $P_{\infty}(\zeta)=0.$  Note that:
 $$v_\infty(x-\zeta)=1,$$
 $$\sgn(x-\zeta)= P_{\infty}'(\zeta)^{-1}.$$
 The integral closure of $A$ in $K(\mathbb F_\infty)$ is $A[\mathbb F_\infty].$ The abelian group $A[\mathbb F_\infty]^\times $ is equal to:
 $$\mathbb F_\infty^\times \prod_{k=1}^{d_\infty-1}(\frac{x-\zeta}{x-\zeta^{q^k}})^{\mathbb Z}.$$
 We know that $A[\mathbb F_\infty]$ is a principal ideal domain and we have:
 $$PA[\mathbb F_\infty]= \frac{1}{x-\zeta}A[\mathbb F_\infty].$$
 Furthermore $B=A[\mathbb F_\infty][u],$ where $u\in B^\times$ is such that:
 $$u^{\frac{q^{d_\infty}-1}{q-1}}=\prod_{k=0}^{d_\infty-1} \frac{\zeta -x^{q^k}}{\zeta^{q^k}-x^{q^k}}.$$
 Indeed, using Thakur Gauss sums (\cite{THA3}), there exists $g\in \overline{K}$ such that $K(\mathbb F_\infty, g)/K$ is a finite abelian extension and:
$$g^{q^{d_\infty}-1}= \prod_{k=0}^{d_\infty-1} (\zeta -x^{q^k}).$$
 Furthermore $K(\mathbb F_\infty,g)/K$ is unramified outside $\infty$  and the pole of $x,$ and $P_\infty(x)$ is a local norm for every place of $K(\mathbb F_\infty,g)$ above $\infty.$\par
 Let $z=\rho(x)\in \rho(K)^\times.$ Then:
 $$\mathbb H=H(z).$$
 Let $Q\in \bar X(\mathbb F_q)$ be the unique point which is a pole of $z,$ then:
 $$(z-x)=(\xi)-(Q).$$
 We choose $\bar {\infty}$ to be the point of $\bar X(\mathbb F_\infty)$ which is the zero of $z-\zeta.$ Then:
 $$(\frac{z-x}{z-\zeta})=(\xi)-(\bar \infty).$$
 We easily deduce that if $f$ is a shtuka function relative to $\bar {\infty}$ (note that $f$ is well-defined modulo $\{ x\in \mathbb F_\infty^\times, x^{\frac{q^{d_\infty}-1}{q-1}}=1\}$), then $f$ is of the form:
 $$\frac{z-x}{z-\zeta}v, v\in H^\times.$$
Let $\theta= \frac{1}{P_{\infty}(x)}\in A.$ Then:
$$\sgn(\theta)=1,$$
$$\deg \theta= d_\infty.$$
 Let $\phi$ be the Drinfeld module attached to $f,$ then:
 $$\phi_\theta= \theta +\cdots +  \tau^{d_\infty}.$$
 We have:
 $$f\cdots f^{(d_\infty-1)}= \frac{\prod_{k=0}^{d_\infty-1} (z-x^{q^k})}{P_\infty(z)} v^{\frac{q^{d_\infty}-1}{q-1}}.$$
 We get:
 $$1=\prod_{k=0}^{d_\infty-1} (\zeta-x^{q^k})v^{\frac{q^{d_\infty}-1}{q-1}}.$$
 Thus:
 $$(vg^{q-1})^{\frac{q^{d_\infty}-1}{q-1}}=1,$$
 So that,
 $$f=\frac{z-x}{z-\zeta}g^{1-q} \zeta',$$
 where $\zeta' \in \mathbb F_\infty^\times$ is such that:
 $$(\zeta')^{\frac{q^{d_\infty}-1}{q-1}}=1.$$
 Furthermore, if we write $\exp_\phi=\sum_{i\geq 0} e_i(\phi) \tau^i, e_i(\phi)\in H,$ then:
 $$e_i(\phi)= g^{q^i-1} (\zeta')^{-\frac{q^i-1}{q-1}}\prod_{k=0}^{i-1}\frac{x^{q^i}-\zeta^{q^k}}{x^{q^i}-x^{q^k}}.$$
 We also deduce that:
 $$\forall a\in A, \phi_a=a+\cdots + \sgn(a) \tau^{\deg a}.$$

 Recall that $H\subset \mathbb C_\infty,$ and $v_\infty(x-\zeta)=1.$  We now work in $$L=\mathbb F_\infty(z)((\sqrt[q^{d_\infty}-1]{-P_\infty(x)})).$$ Recall that $g$ is the Thakur-Gauss sum associated to $\sgn,$ i.e. let $C:\mathbb F_q[x]\rightarrow \mathbb F_q[x]\{\tau\}$ be the homomorphism of $\mathbb F_q$-algebras such that $C_x=x+\tau,$ we have chosen $\lambda\in H\setminus\{0\}$ such that $C_{P_\infty(x)}(\lambda)=0,$ and:
 $$g= -\sum_{y\in \mathbb F_q[x]\setminus\{0\}, \deg_xy<d_\infty} \sgn (y)^{-1}C_y(\lambda).$$
 Furthermore, $\lambda$ is chosen is such a way that:
 $$\lambda \in \sqrt[q^{d_\infty}-1]{-P_\infty(x)}K_\infty,$$
 $$\sgn(\frac{\lambda}{\sqrt[q^{d_\infty}-1]{-P_\infty(x)}})=1.$$
 Thus:
 $$\sgn(\frac{g}{\sqrt[q^{d_\infty}-1]{-P_\infty(x)}})=1.$$
 Recall also that:
 $$\mathbb T= \rho(A)[\mathbb F_\infty]((\sqrt[q^{d_\infty}-1]{-P_\infty(x)})). $$
 We can choose $f$ such that $\zeta'=1,$ i.e. $f=\frac{z-x}{z-\zeta} g^{1-q}.$ Now, recall  that:
 $$f, \frac{z-x}{z-\zeta}\in \mathbb T^\times.$$
Set:
$$U=\prod_{i\geq 0}(1+\frac{(\zeta-x)^{q^i}}{z-\zeta^{q^i}})^{-1}\in L^\times.$$
Then:
$$U \in \mathbb T^{\times}.$$
Furthermore:
$$\tau (U)= \frac{z-x}{z-\zeta}U.$$
Let's set:
$$\omega= g^{-1}U,$$
Then:
\begin{align*}
\tau (\omega)&=f \omega,\\
\sgn(\omega\, \sqrt[q^{d_\infty}-1]{-P_\infty(x)})&=1,\\
\omega &\in \mathbb T^\times,\\
\{ x\in \mathbb T, \tau (x)=fx\}&= \omega \rho(A).
\end{align*}
Finally observe that:
$$(z-x)\omega\mid_{\xi}= g^{-1}(x-\zeta)\prod_{i\geq 1}(1+\frac{(\zeta-x)^{q^i}}{x-\zeta^{q^i}})^{-1}.$$
Thus, there exists $b\in K^\times, $ $\sgn(b)=1,$ $\zeta'$ a root of $P_\infty(x),$ such that:
$$\widetilde{\pi}= bg'^{-1}(x-\zeta')\prod_{i\geq 1}(1+\frac{(\zeta'-x)^{q^i}}{x-(\zeta')^{q^i}})^{-1},$$
for some well-chosen Thakur Gauss sum $g'$ relative to a twist of $\sgn.$\par
Let's treat the elementary (and well-known, see \cite{AND&THA}, and especially the proof of Lemma 2.5.4) case $d_\infty=1.$Then $A=\mathbb F_q[\theta]$ for some $\theta\in K,$ $\sgn (\theta)=1.$ Let's take $x=\frac{\theta+1}{\theta}.$ Then $P_\infty(x)= x-1,$ and $\zeta=1.$ In that case:
$$g=\sqrt[q-1]{-P_\infty(x)} =\sqrt[q-1]{-\frac{1}{\theta}}.$$
We get:
$$f=\frac{z-x}{z-1}g^{1-q}=t-\theta ,$$
where $t=\rho(\theta).$ We have:
$$\phi_{\frac{1}{P_\infty(x)}}= \phi_\theta= \theta+\tau.$$
We get:
$$\omega= \sqrt[q-1]{-\theta}\prod_{i\geq 0} (1- \frac{t}{\theta^{q^i}})^{-1}\in \mathbb T= \mathbb F_q[t]((\sqrt[q-1]{\frac{-1}{\theta}})).$$
In this case $\phi$ is standard, thus we have:
$${\rm Ker}\exp_\phi=\widetilde{\pi} A,$$
for $\widetilde{\pi}\in \sqrt[q-1]{-\theta} K_\infty, \sgn(\widetilde{\pi} \, \sqrt[q-1]{\frac{-1}{\theta}})=1.$ Let's set:
$$\omega'=\exp_\phi(\frac{\widetilde{\pi}}{f})\in \mathbb T\setminus\{0\}.$$
Then, one has:
$$\phi_\theta(\omega')= \exp_\phi (\theta \frac{\widetilde{\pi}}{t-\theta})= t\omega'.$$
Thus:
$$\forall a\in A, \phi_a(\omega')= \rho(a)\omega'.$$
Therefore there exists $a\in A\setminus\{0\}$ such that:
$$\omega' =\omega \rho(a).$$
But, since $\forall i\geq 0,$ $v_\infty (e_i(\phi))=i q^i,$ by examining the Newton polygon of $\sum_{i\geq 0} e_i(\phi) \tau^i,$ we get:
$$v_\infty(\widetilde{\pi})= \frac{-q}{q-1}.$$
This implies:
$$v_\infty (\omega'- \frac{\widetilde{\pi}}{f})\geq q-\frac{q}{q-1} .$$
Therefore:
$$\sgn(\omega'\, \sqrt[q-1]{\frac{-1}{\theta}})=\sgn (\frac{\widetilde{\pi}}{f}\, \sqrt[q-1]{\frac{-1}{\theta}})=-1.$$
Thus:
$$\omega'=-\omega.$$
We get:
$$\frac{- \widetilde{\pi}}{\theta^2}= (z-x)\omega'\mid_{\xi}= -(z-x)\omega\mid_{\xi}.$$
Thus:
$$(z-x)\omega\mid_{\xi}= \frac{\widetilde{\pi}}{\theta^2},$$
and therefore:
$$ \widetilde{\pi}= \theta^2(z-x)\omega\mid_{\xi}= \sqrt[q-1]{-\theta} \theta \prod_{i\geq 1} (1-\theta^{1-q^i})^{-1}.$$
\section{A rationality result for twisted $L$-series}${}$\par
Let $s$ be an integer, $ s\geq 1.$ We introduce:
$$\mathcal A_s=A\otimes_{\mathbb F_q}\cdots \otimes_{\mathbb F_q} A = A^{\otimes s},$$
and set:
$$k_s={\rm Frac}(\mathcal A_s).$$
For $i=1, \ldots, s,$ let $\rho_i: K\rightarrow k_s$ be the homomorphism of $\mathbb F_q$-algebras such that $\forall a\in A, \rho_i(a)=1\otimes\cdots 1\otimes a\otimes 1\cdots \otimes 1,$ where $a$ appears at the $i$th position. We set:
$$\mathbb A_s =A\otimes_{\mathbb F_q}k_s,$$
$$\mathbb K_s={\rm Frac} (\mathbb A_s),$$
$$\mathbb H_s= {\rm Frac}(B\otimes_{\mathbb F_q} k_s).$$
We identify $H$ with its image $H\otimes 1$ in $\mathbb H_s,$ and $k_s$ with its image $1\otimes k_s.$ Thus:
$$\mathbb A_s=A[k_s].$$
We also identify $G$ with the Galois group of $\mathbb H_s/\mathbb K_s.$  For $i=1, \ldots, s,$ $\rho_i$ induces a homomorphism of $H$-algebras:
$$\rho_i: \mathbb H\rightarrow \mathbb H_s.$$
Let $\mathbb K_{s,\infty}$ be the $\infty$-adic completion of $\mathbb K_s,$ i.e.:
$$\mathbb K_{s, \infty}= k_s[\mathbb F_\infty]((\pi)).$$
We set:
$$\mathbb H_{s, \infty}= \mathbb  H_s\otimes_{\mathbb K_s} \mathbb K_{s, \infty}.$$
Then we have an isomorphism of $\mathbb K_{s, \infty}$-algebras:
$$\kappa: \mathbb H_{s, \infty}\simeq k_s[\mathbb F_\infty]((\pi_*))^{\mid{\rm Pic}(A)\mid},$$
where we set   $\pi_*:=\sqrt[\frac{q^{d_\infty}-1}{q-1}]{-\pi}.$\par
Let $V$ be a finite dimensional $\mathbb K_{s, \infty}$-vector space. An $\mathbb A_s$-module $M,$ $M\subset V,$ will be called an $\mathbb A_s$-lattice in $V,$ if $M$ is a finitely generated $\mathbb A_s$-module which is discrete in $V$ and such that $M$ contains a $\mathbb K_{s, \infty}$-basis of $V.$ For example, $\mathbb B_s:=B[k_s]$ is an $\mathbb A_s$-lattice in $\mathbb H_{s, \infty}.$

\medskip

Let $\phi\in \Drin$ and let $f$ be its associated shtuka function. For $i=1, \ldots, s$ we set:
$$f_i=\rho_i(f).$$
Let $\tau : \mathbb H_{s, \infty}\rightarrow \mathbb H_{s, \infty}$ be the continuous homomorphism of $k_s$-algebras such that:
$$\forall x\in H\otimes_KK_\infty, \quad \tau(x)=x^q.$$
Let $\varphi_s: \mathbb A_s\rightarrow \mathbb H_s\{\tau\}$ be the homomorphism of $k_s$-algebras such that:
$$\forall a\in A, \quad \varphi_{s,a}=\sum_{k=0}^{\deg a} \phi_{a,k} \prod_{i=1}^s\prod_{j=0}^{k-1} f_i^{(j)} \, \tau ^k.$$
We consider:
$$\exp_{\varphi_s} = \sum_{k\geq 0} e_k(\phi)\prod_{i=1}^s\prod_{j=0}^{k-1} f_i^{(j)} \, \tau ^k \in \mathbb H_s\{\{\tau\}\}.$$
Then:
$$\forall a\in \mathbb A_s, \quad \exp_{\varphi_s} a= \varphi_{s,a}\exp_{\varphi_s}.$$
Furthermore $\exp_{\varphi_s}$ converges on $\mathbb H_{s, \infty}.$

\begin{proposition}\label {PropositionS3-1} 
Assume that $s\equiv 1\pmod{q-1}.$ The $\mathbb  A_s$-module ${\rm Ker}(\exp_{\varphi_s}: \mathbb H_{s, \infty} \to \mathbb H_{s, \infty})$ is a finitely generated $\mathbb A_s$-module, discrete in $\mathbb H_{s,\infty}$ and of rank $\mid{\rm Pic}(A)\mid$. In particular, ${\rm Ker}\exp_{\varphi_s}$ is an $\mathbb A_s$-lattice in $\{x\in \mathbb H_{s, \infty}, \forall a\in A\setminus \{0\}, \sigma_{aA}(x)= \sgn (a)^{q^{n(\phi)}(s-1)}x\}.$
Furthermore, if $s\not \equiv 1\pmod{q-1},$ then:
$${\rm Ker}\exp_{\varphi_s}=\{0\}.$$
\end{proposition}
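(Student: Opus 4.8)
The plan is to analyze $\exp_{\varphi_s}$ as an entire function on $\mathbb H_{s,\infty}$ by relating it, after twisting by the special function $U$ of Proposition \ref{PropositionS2-1}, to the usual Drinfeld exponential $\exp_\phi$ acting diagonally on the $\mid\Pic(A)\mid$ copies produced by the isomorphism $\kappa$. Concretely, recall that $\tau(U)=fU$, and that the very definition of $\varphi_s$ mimics the shtuka recursion $\rho(a)=\sum_k \phi_{a,k} f\cdots f^{(k-1)}$ with $f$ replaced by $\prod_i f_i$; so one expects an identity of the shape $\exp_{\varphi_s}\cdot(\text{twist by }\prod_i U_i)=(\text{twist by }\prod_i U_i)\circ\exp_\phi$ in the appropriate completed twisted polynomial ring, i.e. $\varphi_s$ is isomorphic over a suitable ring to $\phi$ viewed diagonally. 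First I would set up this isomorphism carefully, tracking in which ring the elements $U_i=\rho_i(U)$ live and noting that $\prod_i U_i$ (or $\prod_i \rho_i(\omega)$-type objects) carries a definite action of $G=\Gal(\mathbb H_s/\mathbb K_s)$.

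Next I would use Lemma \ref{LemmaS2-1}, more precisely the relation $\sigma_I(f)=f\,\tau(u_I)/u_I$ and $\sigma_{aA}(f)=\sgn(a)^{q^{n(\phi)}-q^{n(\phi)+1}}f$, to compute how $\prod_{i=1}^s f_i$ transforms under $\sigma_{aA}$: each factor contributes $\sgn(a)^{q^{n(\phi)}(1-q)}$, so $\prod_i f_i$ is multiplied by $\sgn(a)^{q^{n(\phi)}s(1-q)}$, while the ``lattice'' $\widetilde\pi A[\rho(A)]$ for $\exp_\phi$ transforms by $\sgn(a)^{-q^{n(\phi)}}$ (Lemma \ref{LemmaS2-2}) — the bookkeeping here is where the congruence $s\equiv 1\pmod{q-1}$ enters. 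When $s\equiv1\pmod{q-1}$ the action is well-defined and the period lattice of $\phi$ transported through the isomorphism becomes an $\mathbb A_s$-stable lattice in the eigenspace $\{x\in\mathbb H_{s,\infty}:\sigma_{aA}(x)=\sgn(a)^{q^{n(\phi)}(s-1)}x\}$; via $\kappa$ this eigenspace is $\mid\Pic(A)\mid$-dimensional over $k_s[\mathbb F_\infty]((\pi_*))$ scaled down to $\mathbb K_{s,\infty}$, and $\Lambda(\phi)\rho(A)$ (which is $\rho(A)$ of rank one inside each copy, as in Lemma \ref{LemmaS2-3} and Remark \ref{RemakS2-1}) assembles to a finitely generated discrete $\mathbb A_s$-module of rank $\mid\Pic(A)\mid$. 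Discreteness and finite generation follow from the corresponding one-variable statements base-changed along $\mathbb K_{s,\infty}$, exactly as in the proof of Lemma \ref{LemmaS2-3}.

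For the converse, when $s\not\equiv1\pmod{q-1}$, the point is that any element $\lambda\in\Ker\exp_{\varphi_s}$ would, after the twist, give an element of $\Ker\exp_\phi$ lying in a subspace on which $G$ acts through a character $\sigma_{aA}\mapsto\sgn(a)^{q^{n(\phi)}(s-1)}$ that is \emph{not} of the form realizable inside $\Lambda(\phi)\rho(K)$ (whose $G$-action is through $\sgn(a)^{-q^{n(\phi)}}$-type characters only); since $q^{n(\phi)}(s-1)\not\equiv -q^{n(\phi)}\cdot 0$ appropriately, i.e. the relevant character on $\mathbb F_\infty^\times/\mathbb F_q^\times$ is nontrivial precisely when $s\not\equiv1\pmod{q-1}$, the intersection of the kernel with that eigenspace is zero. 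Equivalently: $\exp_{\varphi_s}$ restricted to the relevant $\kappa$-component is, up to the unit twist, $\exp_\phi$ on $\mathbb K_{s,\infty}$-scalars, whose kernel is $\Lambda(\phi)\rho(A)$ with a fixed $G$-action, and comparing $G$-actions forces $\lambda=0$.

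I expect the main obstacle to be making the ``isomorphism over a suitable ring'' between $\varphi_s$ and $\phi$ genuinely precise: one must identify the correct ring containing the twisting element $\prod_i U_i$ (this element need not be a unit — cf. Remark \ref{RemakS2-1}, where $u_I\notin\mathbb T^\times$ in general and one must pass to $u_I/\rho(x_I)$), show that conjugation by it sends the completed twisted-polynomial ring of $\varphi_s$ into that of $\phi$, and confirm that $\exp_{\varphi_s}$ converges to an isomorphism onto the claimed eigenspace rather than merely into it. The $G$-equivariance and the descent of everything down from $k_s[\mathbb F_\infty]((\pi_*))^{\mid\Pic(A)\mid}$ to $\mathbb K_{s,\infty}$ via $\kappa$ — together with the exact arithmetic of the character $\sgn(\cdot)^{q^{n(\phi)}(s-1)}$ modulo $q-1$ — is the delicate part; the finite generation, discreteness, and rank count are then routine consequences of the one-variable results in Section 2 after base change.
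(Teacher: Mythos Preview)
Your overall strategy matches the paper's: decompose $\mathbb H_{s,\infty}$ via $\kappa$ into components indexed by representatives $\sigma\in\mathcal G$ of $G/\Gal(H/H_A)$, and in each component twist $\exp_{\sigma(\varphi_s)}$ by $\prod_{i=1}^s U_{i,\sigma}$ (where $\tau(U_{i,\sigma})=\sigma(f_i)U_{i,\sigma}$, supplied by Proposition~\ref{PropositionS2-1}) to reduce to $\exp_{\phi^\sigma}$, whose kernel on $L_s$ is $\Lambda(\phi^\sigma)k_s$ by the argument of Lemma~\ref{LemmaS2-3}. This yields $\Ker\exp_{\sigma(\varphi_s)}\mid_{L_s}=\Lambda(\phi^\sigma)k_s\big/\prod_i U_{i,\sigma}$, exactly as in the paper.

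Where your plan diverges --- and where there is a genuine gap --- is in the mechanism for the dichotomy on $s$. You argue via a $G$-character mismatch on $\mathbb H_{s,\infty}$: for $s\not\equiv 1\pmod{q-1}$, ``comparing $G$-actions forces $\lambda=0$''. But $\exp_{\varphi_s}$ is \emph{not} $G$-equivariant for general $s$ (cf.\ the Remark following the Proposition: one needs $s\equiv 1\pmod{(q^{d_\infty}-1)/(q-1)}$ for $\varphi_s^\sigma=\varphi_s$), so $\Ker\exp_{\varphi_s}$ is not a priori $G$-stable and a character argument on $\mathbb H_{s,\infty}$ does not apply directly. The paper's argument is simpler and sidesteps this: since both $U_{i,\sigma}$ and $\Lambda(\phi^\sigma)$ lie in $(\sqrt[q^{d_\infty}-1]{-\pi})^{-q^{n(\phi)}}\mathbb K_{s,\infty}$ (Lemma~\ref{LemmaS2-2} and Proposition~\ref{PropositionS2-1}), one has
\[
\Ker\exp_{\sigma(\varphi_s)}\mid_{L_s}\ \subset\ (\sqrt[q^{d_\infty}-1]{-\pi})^{\,q^{n(\phi)}(s-1)}\,\mathbb K_{s,\infty},
\]
and this meets the $\kappa$-component $k_s[\mathbb F_\infty]((\pi_*))$ (recall $\pi_*=(\sqrt[q^{d_\infty}-1]{-\pi})^{q-1}$ up to a constant) nontrivially iff $q^{n(\phi)}(s-1)\equiv 0\pmod{q-1}$, i.e.\ iff $s\equiv 1\pmod{q-1}$. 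Your character heuristic can be salvaged, but only after replacing the global $G$-action by the Galois group of $L_s$ over $k_s[\mathbb F_\infty]((\pi_*))$; that is precisely what the field-containment argument encodes. The eigenspace description in the statement is then a \emph{consequence} (read off from the exponent $q^{n(\phi)}(s-1)$ via the local identification of $\Gal(H/H_A)$ with inertia), not the engine of the proof.
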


\begin{proof} One can show that, for any $s,$ ${\rm Ker}\exp_{\varphi_s}$ is a finitely generated $\mathbb A_s$-module and is discrete in $\mathbb H_{s, \infty}.$\par
We view $\mathbb H_s$ as a subfield of $k_s[\mathbb F_\infty]((\pi_*)).$ There exists $\mathcal G\subset G$ a system of representatives of $\frac{G}{{\rm Gal}(H/H_A)},$ such that:
$$\forall x\in \mathbb H_s, \quad \kappa(x)= (\sigma(x))_{\sigma\in \mathcal G}.$$
By Proposition \ref{PropositionS2-1}, for $i=1, \ldots, s,$ $\sigma \in \mathcal G,$ we can  select a non-zero element $U_{i, \sigma}\in L_s= k_s[\mathbb F_\infty]((\sqrt[q^{d_\infty}-1]{-\pi}))$ such that:
$$\tau(U_{i,\sigma})=\sigma(f_i)U_{i, \sigma}.$$
Thus, by similar arguments to those of the proof of Lemma \ref{LemmaS2-3}, we get:
$${\rm Ker}\exp_{\sigma(\varphi_s)}\mid_{L_s}= \frac{\Lambda(\phi^{\sigma})k_s}{\prod_{i=1}^sU_{i,\sigma}}.$$
Recall that (see Proposition \ref{PropositionS2-1}):
$$U_{i,\sigma}\in \Lambda(\phi^{\sigma})k_s\subset (\sqrt[q^{d_\infty}-1]{-\pi})^{-q^{n(\phi)}} \mathbb K_{s, \infty},$$
and (Lemma \ref{LemmaS2-2}):
$$\Lambda(\phi^{\sigma})k_s\subset (\sqrt[q^{d_\infty}-1]{-\pi})^{-q^{n(\phi)}} \mathbb K_{s, \infty} .$$
Thus:
$${\rm Ker}\exp_{\sigma(\varphi_s)}\mid_{L_s}\subset(\sqrt[q^{d_\infty}-1]{-\pi})^{q^{n(\phi)}(s-1)} \mathbb K_{s, \infty} .$$
Thus, if $s\equiv 1\pmod{q-1},$ we get:
$${\rm Ker}\exp_{\sigma(\varphi_s)}\mid_{k_s[\mathbb F_\infty]((\pi_*))}=\frac{\Lambda(\phi^{\sigma})k_s}{\prod_{i=1}^sU_{i,\sigma}},$$
and if $s\not \equiv 1\pmod{q-1}:$
$${\rm Ker}\exp_{\sigma(\varphi_s)}\mid_{k_s[\mathbb F_\infty]((\pi_*))}=\{0\}.$$

\end{proof}
\begin{remark} Let $\mathbb H'_s= {\rm Frac}(H_A\otimes_{\mathbb F_q} k_s).$ Let $I=aA, a\in A \setminus\{0\},$ and $\sigma=\sigma_I\in {\rm Gal}(H/H_A).$  We have already noticed  that:
$$\sigma(f) = \sgn(a)^{q^{n(\phi)}-q^{n(\phi)+1}}f.$$
We verify that:
$$\forall \sigma \in {\rm Gal}(H/H_A), \quad \varphi_s^{\sigma}= \varphi_s \Leftrightarrow s\equiv 1\pmod{\frac{q^{d_\infty}-1}{q-1}}.$$
In particular, when $s\equiv 1\pmod{q^{d_\infty}-1},$ $\varphi_s$ is defined over $\mathbb H'_s,$ $\exp_{\varphi_s}: \mathbb H_s\rightarrow \mathbb H_s$ is ${\rm Gal}(H/H_A)$-equivariant, and  ${\rm Ker}\exp_{\varphi_s}$ is an $\mathbb A_s$-lattice in $\mathbb H'_{s, \infty}:= \mathbb H'_s\otimes_{\mathbb K_s}\mathbb K_{s, \infty}.$
\end{remark}
We introduce (see \cite{ANDTR1}):
 $$\mathcal L_s=\sum_{I\in \mathcal I(A), I\subset A} \frac{\prod_{k=1}^s \rho_k(u_I)}{\psi_\phi(I)} \sigma_I\in \mathbb H_{s, \infty}[G]^\times.$$
\begin{theorem}\label{TheoremS3-1}
Let $s\equiv 1\pmod{\frac{q^{d_\infty}-1}{q-1}}.$  Set:
$$W'_s= (\oplus _{i_1, \ldots, i_s\geq 0} B\prod_{k=1}^s f_k\cdots f_k^{(i_k-1)})^{{\rm Gal}(H/H_A)}.$$
Then:
$$\exp_{\varphi_s}(\mathcal L_sW'_s)\subset W'_s.$$
\end{theorem}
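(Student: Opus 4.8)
The plan is to reduce the claimed inclusion to a log-algebraicity statement and then to the rationality of $\mathcal{L}_s$ acting on an appropriate module of shtuka-type functions. First I would unwind the definitions: by Proposition~\ref{PropositionS2-2} and the construction of $\exp_{\varphi_s}$, the exponential $\exp_{\varphi_s}$ is, up to the substitution $\tau^k \mapsto \prod_{i=1}^s\prod_{j=0}^{k-1} f_i^{(j)}\tau^k$, the image under the $\rho_i$'s of $\exp_\phi$; correspondingly, the module $W'_s$ is the Galois-invariant part of the $\mathbb{B}_s$-span of the products $\prod_k f_k\cdots f_k^{(i_k-1)}$, which via the isomorphism $\gamma$ of Lemma~\ref{LemmaS2-1} (applied in each variable $\rho_k$ and tensored) corresponds to $\mathbb{B}_s\{\tau\}^{{\rm Gal}(H/H_A)}$. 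Under this dictionary, $\exp_{\varphi_s}$ becomes the usual $\exp_\phi$ acting on the twisted algebra, and $\mathcal{L}_s$ becomes an operator built from the $u_I/\psi_\phi(I)$ which, again by Lemma~\ref{LemmaS2-1} (the relations $u_I|_\xi = \psi_\phi(I)$, $\sigma_I(f)u_I = fu_I^{(1)}$, $u_{IJ} = \sigma_I(u_J)u_I$), transform nicely under $\sigma$ and under the $\tau$-twist.

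Next I would set up the log-algebraicity machinery in the style of \cite{ANDTR3}. The key object is the ``Stark unit'' type series: for a class representative one forms $\exp_{\varphi_s}\big(\sum_{I}\frac{\prod_k\rho_k(u_I)}{\psi_\phi(I)}\sigma_I\cdot w\big)$ for $w \in W'_s$ and shows it lies in $W'_s$. The strategy is to evaluate both sides at the points $\xi^{(m)}$ (all $m\ge 0$) — since a function in the relevant Riemann–Roch space $W(B)$ (or its $s$-variable, Galois-invariant analogue) is determined by its values at the $\xi^{(m)}$ together with its order of vanishing/pole data along $V$ and $\bar\infty$, controlling the values at $\xi^{(m)}$ pins down membership in $W'_s$. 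At $\xi^{(m)}$ one uses $f^{(i)}U'|_{\xi^{(i)}} \in \widetilde\pi^{q^i}H$ from Proposition~\ref{PropositionS2-2}, the formula $e_i(\phi) = 1/(f\cdots f^{(i-1)})|_{\xi^{(i)}}$, and the relation between $\mathcal{L}_s$ and the special $L$-values to convert the assertion into an identity among algebraic numbers in $H$; Anderson-type log-algebraicity (as in \cite{AND2}, \cite{PAP}, \cite{APTR}, in the generalized form of \cite{ANDTR3}) then yields that these values are genuinely in $B$ and satisfy the required growth, hence the image sits in $W'_s$.

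Concretely, the key steps in order are: (1) translate $W'_s$ and $\exp_{\varphi_s}$ into the $\tau$-twisted picture via $\gamma$ and $\rho_1,\dots,\rho_s$; (2) verify that $\mathcal{L}_s W'_s \subset \mathbb{H}_{s,\infty}$-span of the $\prod_k f_k\cdots f_k^{(i_k-1)}$ by using the cocycle relations for $u_I$ from Lemma~\ref{LemmaS2-1}, and that the ${\rm Gal}(H/H_A)$-invariance is preserved because $s\equiv 1\pmod{\frac{q^{d_\infty}-1}{q-1}}$ (this is exactly the congruence that makes $\varphi_s^\sigma = \varphi_s$ for $\sigma\in{\rm Gal}(H/H_A)$, as noted in the Remark); (3) apply $\exp_{\varphi_s}$ and show convergence and that the result lies in the $B$-lattice, reducing to the one-variable log-algebraicity theorem for $\phi$ after specializing/evaluating; (4) conclude integrality and the pole/zero bounds defining $W'_s$.

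The main obstacle I expect is step (3)–(4): proving that $\exp_{\varphi_s}(\mathcal{L}_s W'_s)$ is \emph{integral} (lands in $B\otimes k_s$ rather than merely in $\mathbb{H}_s$) and satisfies the correct boundedness along $\bar\infty$ so that it actually lies in the finitely-generated module $W'_s$ and not just in its fraction field. This is precisely where Anderson's log-algebraicity must be invoked in the right generality — handling simultaneously the non-trivial class group (the $\mid{\rm Pic}(A)\mid$ factor, via summing over class representatives $\mathcal{G}$ and the identity $u_{IJ} = \sigma_I(u_J)u_I$), the higher genus (the divisor $V$ and the possible vanishing of $e_n(\phi)$ for $n\in\{2,\dots,g-1\}$), and $d_\infty>1$ (the ramification $H/H_A$ and the $\mathbb{F}_\infty$-structure). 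I would model this part closely on the analogue of the Stark conjecture proved in \cite{ANDTR3}, adapting the shtuka-function formalism of Section~2 to the $s$-fold tensor setting, and would expect the technical heart to be a careful $v_\infty$-estimate on the coefficients $e_k(\phi)\prod_{i,j}f_i^{(j)}$ combined with the rationality of the twisted $L$-value $\mathcal{L}_s$.
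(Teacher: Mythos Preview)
Your proposal is considerably more elaborate than the paper's proof, which is essentially a two-line citation. The paper first uses the congruence $s\equiv 1\pmod{\frac{q^{d_\infty}-1}{q-1}}$ together with Lemma~\ref{LemmaS2-1} (specifically $\sigma_{aA}(f)=\sgn(a)^{q^{n(\phi)}-q^{n(\phi)+1}}f$) to conclude that $\mathcal L_s\in \mathbb H'_{s,\infty}[G]^\times$, then invokes the preceding Remark (that $\varphi_s$ is ${\rm Gal}(H/H_A)$-equivariant under this congruence) and appeals directly to \cite{ANDTR1}, Corollary~4.10. In other words, the log-algebraicity input is treated as a black box already established in the companion paper \cite{ANDTR1}, not reproved here.

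Your sketch is a plausible attempt to unpack what that black box might contain, but two points deserve comment. First, you model the argument on \cite{ANDTR3}, whereas the paper relies on \cite{ANDTR1}; the latter is where the Stark-units machinery and the relevant corollary live, and its argument proceeds via a class-formula/trace-formula mechanism (in the Taelman--Demeslay style) rather than via pointwise evaluation at the $\xi^{(m)}$. Second, your proposed strategy of pinning down membership in $W'_s$ by evaluating at the $\xi^{(m)}$ is not obviously sufficient: knowing the values at those points together with pole bounds does not by itself give integrality over $B$ (as opposed to $H$), and your step~(3)--(4) would still need an independent integrality argument. The actual mechanism in \cite{ANDTR1} bypasses this by working directly with the $\mathbb A_s$-module structure and a formal identity for $\exp_{\varphi_s}\circ\mathcal L_s$ rather than by specialization.

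In short: the paper's proof is a citation, and the heavy lifting you outline is done elsewhere. Your outline captures the right ingredients (the cocycle relations for $u_I$, the equivariance under ${\rm Gal}(H/H_A)$, the need for a log-algebraicity input) but misidentifies the source and proposes a specialization route that would require substantially more work than the module-theoretic argument actually used.
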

\begin{proof} By our assumption on $s,$ and by Lemma \ref{LemmaS2-1}, we get:
$$\mathcal L_s\in \mathbb H'_{s, \infty}[G]^\times.$$
The result  is then a consequence of  the above remark and  \cite{ANDTR1}, Corollary 4.10.
\end{proof}
\begin{remark} Let $W_s'=(\oplus _{i_1, \ldots, i_s\geq 0} B\prod_{k=1}^s f_k\cdots f_k^{(i_k-1)})^{{\rm Gal}(H/H_A)}.$ By Lemma \ref{LemmaS2-1.1}, there exists $u\in B^\times$ such that:
$$\frac{f}{u}\in {\rm Frac}(H_A\otimes_{\mathbb F_q}A).$$
In particular:
$$B=B'[u],$$
where we recall that $B'$ is the integral closure of $A$ in $H_A.$ Thus:
$$W'_s=\oplus _{i_1, \ldots, i_s\geq 0}B' u^{-\sum_{k=1}^s \frac{q^{i_k}-1}{q-1}}\prod_{k=1}^s f_k\cdots f_k^{(i_k-1)}.$$
Let $\mathbb W'_s$ be the $k_s$-vector space generated by $W'_s.$ Then, by the proof of \cite{ANDTR1}, Lemma 4.4, $\mathbb W'_s$ is a fractional ideal of $\mathbb  B'_s:= B'[k_s],$ and therefore $\mathbb W'_s$ is an $\mathbb A_s$-lattice in $\mathbb H'_{s, \infty}.$
\end{remark}
\begin{proposition}\label{PropositionS3-2}
Let $s\equiv 1\pmod{\frac{q^{d_\infty}-1}{q-1}}.$ We set:
$$\mathbb U_s= \{ x\in \mathbb H'_{s, \infty}, \exp_{\varphi_s}(x)\in \mathbb W'_s\}.$$
Then $\mathbb  U_s$ is an $\mathbb A_s$-lattice in $\mathbb H'_{s, \infty}$ and:
$$\mathcal L_s \mathbb  W'_s \subset \mathbb U_s.$$
If furthermore $s\equiv 1\pmod{q^{d_\infty}-1},$ then $\frac{\mathbb U_s}{{\rm Ker} \exp_{\varphi_s}}$ is a finite dimensional $k_s$-vector space. In particular, there exists $a\in \mathbb A_s\setminus\{0\}$ such that:
$$a\mathcal L_s \mathbb  W'_s\subset {\rm Ker} \exp_{\varphi_s}.$$
\end{proposition}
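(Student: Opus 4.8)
The plan is to prove the three assertions in turn, treating $\mathbb{U}_s$ as the preimage lattice of $\mathbb{W}'_s$ under $\exp_{\varphi_s}$.

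\textbf{Step 1: $\mathbb{U}_s$ is an $\mathbb{A}_s$-lattice.} First I would observe that $\mathbb{U}_s$ is clearly an $\mathbb{A}_s$-submodule of $\mathbb{H}'_{s,\infty}$, since $\exp_{\varphi_s}$ is $\mathbb{A}_s$-semilinear in the sense that $\exp_{\varphi_s}(ax) = \varphi_{s,a}\exp_{\varphi_s}(x)$ and $\mathbb{W}'_s$ is a $\varphi_s$-stable $\mathbb{A}_s$-module (this stability is exactly Theorem \ref{TheoremS3-1}, or rather the underlying structure: $\mathbb{W}'_s$ is stable under multiplication by $\varphi_{s,a}$ because $f_i x^{(1)}\in W(B)$-type closure). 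Then I would use that $\exp_{\varphi_s}$ is an isometry close to the origin (the standard Newton polygon / valuation estimate on the coefficients $e_k(\phi)\prod f_i^{(j)}$, exactly as in the proof of Lemma \ref{LemmaS2-3}): there is an open neighborhood $\mathcal{M}^N$ of $0$ in $\mathbb{H}'_{s,\infty}$ on which $\exp_{\varphi_s}$ restricts to an isometric isomorphism onto an open subgroup of $\mathbb{W}'_s$. Hence $\mathbb{U}_s \cap \mathcal{M}^N = \exp_{\varphi_s}^{-1}(\mathbb{W}'_s \cap \mathcal{M}^N)$ is open, so $\mathbb{U}_s$ contains an open subgroup, giving it full rank and cocompactness-type control; combined with discreteness of $\mathrm{Ker}\exp_{\varphi_s}$ (Proposition \ref{PropositionS3-1}) and finite generation of $\mathbb{W}'_s$, one concludes $\mathbb{U}_s$ is a finitely generated $\mathbb{A}_s$-module containing a basis of $\mathbb{H}'_{s,\infty}$. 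Discreteness of $\mathbb{U}_s$ itself is not needed (and typically false); "lattice" here is meant in the weaker sense the paper uses. Actually, re-reading the paper's definition, I would instead argue $\mathbb{U}_s$ is discrete: if $x_n\to 0$ in $\mathbb{U}_s$ then eventually $x_n\in\mathcal{M}^N$, where $\exp_{\varphi_s}$ is an isometry onto a lattice, forcing $\exp_{\varphi_s}(x_n)\to 0$ inside the discrete set $\mathbb{W}'_s$, hence $\exp_{\varphi_s}(x_n)=0$ eventually, so $x_n\in\mathrm{Ker}\exp_{\varphi_s}$ which is discrete — giving $x_n=0$ eventually.

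\textbf{Step 2: $\mathcal{L}_s\mathbb{W}'_s\subset\mathbb{U}_s$.} This is essentially immediate from Theorem \ref{TheoremS3-1}: by definition of $\mathbb{U}_s$ we must show $\exp_{\varphi_s}(\mathcal{L}_s\mathbb{W}'_s)\subset\mathbb{W}'_s$, and Theorem \ref{TheoremS3-1} gives $\exp_{\varphi_s}(\mathcal{L}_s W'_s)\subset W'_s$; tensoring up to $k_s$ and using that $\mathbb{W}'_s = W'_s\otimes_{?}k_s$ (the $k_s$-span of $W'_s$) together with $k_s$-linearity of both $\exp_{\varphi_s}$ (on the $\tau$-fixed coefficients) and $\mathcal{L}_s$ extends the containment. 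One small point to check: $\mathcal{L}_s$ maps $\mathbb{W}'_s$ into $\mathbb{W}'_s\otimes(\text{bounded part})$ so that $\exp_{\varphi_s}$ converges on it — but this follows from $\mathcal{L}_s\in\mathbb{H}'_{s,\infty}[G]^\times$ together with the fact that $\mathbb{W}'_s$ is a lattice hence bounded, and convergence of $\exp_{\varphi_s}$ on all of $\mathbb{H}'_{s,\infty}$.

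\textbf{Step 3: finiteness of $\mathbb{U}_s/\mathrm{Ker}\exp_{\varphi_s}$ when $s\equiv 1\pmod{q^{d_\infty}-1}$.} Here I would use that $\exp_{\varphi_s}$ induces an $\mathbb{A}_s$-linear (semilinear) exact sequence $0\to\mathrm{Ker}\exp_{\varphi_s}\to\mathbb{H}'_{s,\infty}\xrightarrow{\exp_{\varphi_s}}\mathbb{H}'_{s,\infty}\to 0$ (surjectivity being the usual non-archimedean fact, since $\exp_{\varphi_s}\equiv 1\bmod\tau$), so $\mathbb{U}_s/\mathrm{Ker}\exp_{\varphi_s}\cong\exp_{\varphi_s}(\mathbb{U}_s)=\mathbb{W}'_s\cap\exp_{\varphi_s}(\mathbb{H}'_{s,\infty})=\mathbb{W}'_s$ — wait, that is all of $\mathbb{W}'_s$, not finite. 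The correct statement is that $\mathbb{U}_s/\mathrm{Ker}\exp_{\varphi_s}\cong\mathbb{W}'_s$ as $\mathbb{A}_s$-modules, and the $k_s$-vector space structure: both $\mathbb{U}_s$ and $\mathrm{Ker}\exp_{\varphi_s}$ are $\mathbb{A}_s$-lattices of the same rank $|\mathrm{Pic}(A)|$ (by Step 1 and Proposition \ref{PropositionS3-1}, using $s\equiv 1\bmod q-1$) inside the same finite-dimensional $\mathbb{K}_{s,\infty}$-space, hence their quotient, after inverting the finitely many primes of $\mathbb{A}_s$ where it is supported, is a torsion $\mathbb{A}_s$-module that is finite-dimensional over $k_s$ — concretely, $\mathbb{U}_s$ and $\mathrm{Ker}\exp_{\varphi_s}$ span the same $k_s$-vector space $V_s:=\mathbb{U}_s\otimes_{\mathbb{A}_s}k_s = \mathbb{H}'_{s,\infty}^{(\text{rational part})}$, so $\dim_{k_s}(\mathbb{U}_s\otimes k_s/\mathrm{Ker}\exp_{\varphi_s}\otimes k_s)=0$ but $\mathbb{U}_s/\mathrm{Ker}\exp_{\varphi_s}$ need not vanish — it is finitely generated and torsion over $\mathbb{A}_s$, hence finite over $k_s$ after checking it has no free part, which is exactly the equal-rank statement. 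The main obstacle is making this equal-rank / torsion argument precise: one must pin down the ambient $\mathbb{K}_{s,\infty}$-space (it is $\mathbb{H}'_{s,\infty}$ when $s\equiv 1\bmod q^{d_\infty}-1$, using the Remark after Proposition \ref{PropositionS3-1} that then $\varphi_s$ is defined over $\mathbb{H}'_s$ and $\mathrm{Ker}\exp_{\varphi_s}$ lands in $\mathbb{H}'_{s,\infty}$) and verify both lattices have rank $|\mathrm{Pic}(A)| = \dim_{\mathbb{K}_{s,\infty}}\mathbb{H}'_{s,\infty}$. Once $\mathbb{U}_s/\mathrm{Ker}\exp_{\varphi_s}$ is finite over $k_s$, the final clause follows: $\mathcal{L}_s\mathbb{W}'_s$ is a finitely generated $\mathbb{A}_s$-submodule of $\mathbb{U}_s$ (Step 2), its image in the finite-dimensional $k_s$-space $\mathbb{U}_s/\mathrm{Ker}\exp_{\varphi_s}$ is a finitely generated $\mathbb{A}_s$-submodule of a finite-dimensional $k_s$-vector space, hence killed by some nonzero $a\in\mathbb{A}_s$ (clear denominators / take a common annihilator of generators), i.e. $a\mathcal{L}_s\mathbb{W}'_s\subset\mathrm{Ker}\exp_{\varphi_s}$.

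I expect Step 3's equal-rank bookkeeping — correctly identifying the ambient space and the rank of both lattices under the congruence hypotheses — to be the main obstacle; Steps 1 and 2 are routine non-archimedean functional analysis plus a citation of Theorem \ref{TheoremS3-1}.
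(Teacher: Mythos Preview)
Your overall architecture matches the paper's, but there is a genuine gap in Step~1: your argument for \emph{full rank} of $\mathbb{U}_s$ is wrong, and you never repair it. You claim that $\mathbb{U}_s\cap\mathcal{M}^N=\exp_{\varphi_s}^{-1}(\mathbb{W}'_s\cap\mathcal{M}^N)$ is open, but $\mathbb{W}'_s$ is an $\mathbb{A}_s$-lattice in the paper's sense, hence \emph{discrete}; for $N$ large, $\mathbb{W}'_s\cap\mathcal{M}^N=\{0\}$, and its preimage under the local isometry is $\{0\}$ as well. After you correctly retract to a discreteness argument, nothing in your Step~1 any longer produces a basis of $\mathbb{H}'_{s,\infty}$ inside $\mathbb{U}_s$, yet you still invoke ``by Step~1'' for the rank in Step~3.

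The paper closes this gap by reversing the logical order: it first observes (from $\mathbb{W}'_s$ being a lattice and the local isometry of $\exp_{\varphi_s}$) that $\mathbb{U}_s$ is discrete and finitely generated; then it uses Theorem~\ref{TheoremS3-1} to get $\mathcal{L}_s\mathbb{W}'_s\subset\mathbb{U}_s$; and the crucial step you are missing is that $\mathcal{L}_s\mathbb{W}'_s$ is \emph{itself} an $\mathbb{A}_s$-lattice in $\mathbb{H}'_{s,\infty}$. This is not automatic from $\mathcal{L}_s\in\mathbb{H}_{s,\infty}[G]^\times$, because $G$ does not act faithfully on $\mathbb{H}'_{s,\infty}$. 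The paper passes to $G'=\mathrm{Gal}(H_A/K)$ via the restriction map $\mathrm{res}:\mathbb{H}'_{s,\infty}[G]\to\mathbb{H}'_{s,\infty}[G']$ and uses that $\mathrm{res}(\mathcal{L}_s)\in\mathbb{H}'_{s,\infty}[G']^\times$; since $G'$ acts faithfully, this is what guarantees $\mathcal{L}_s\mathbb{W}'_s$ has full rank, and hence so does $\mathbb{U}_s$. Once both $\mathbb{U}_s$ and $\mathrm{Ker}\exp_{\varphi_s}$ are lattices in the same $\mathbb{K}_{s,\infty}$-space $\mathbb{H}'_{s,\infty}$ (the latter by Proposition~\ref{PropositionS3-1} under $s\equiv 1\pmod{q^{d_\infty}-1}$), your Step~3 reasoning about torsion quotients goes through.
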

\begin{proof} Since $\mathbb W'_s$ is  an $\mathbb A_s$-lattice in $\mathbb H'_{s,\infty},$ we deduce that $\mathbb U_s$ is discrete in $\mathbb H'_{s, \infty}$ and is a finitely generated $\mathbb A_s$-module. By Theorem \ref{TheoremS3-1}, we have:
$$\mathcal L_s \mathbb  W'_s \subset \mathbb U_s.$$
Let $G'={\rm Gal}(H_A/K),$ and let ${\rm res}: \mathbb H'_{s, \infty}[G]\rightarrow \mathbb H'_{s, \infty}[G']$ be the usual restriction map, then:
$${\rm res}(\mathcal L_s)\in \mathbb H'_{s, \infty}[G']^\times.$$
Therefore $\mathcal L_s \mathbb  W'_s $ is  an $\mathbb A_s$-lattice in $\mathbb H'_{s, \infty}.$ We conclude that $\mathbb U_s$ is an $\mathbb A_s$-lattice in $\mathbb H'_{s,\infty}.$\par
If $s\equiv 1\pmod{q^{d_\infty}-1},$ then ${\rm Ker} \exp_{\varphi_s}$ is a $\mathbb  A_s$-lattice in $\mathbb H'_{s, \infty}$ by Proposition \ref{PropositionS3-1}. The proposition follows.
\end{proof}
\begin{theorem}\label{TheoremS3-1}
Let $s\equiv 1\pmod{q^{d_\infty}-1}.$  We work  in $L_s:=k_s[\mathbb F_\infty]((\sqrt[q^{d_\infty}-1]{-\pi})).$ There exist non-zero elements $\omega_1, \ldots, \omega_s \in \mathbb T_s:=
\mathcal A_s[\mathbb F_\infty]((\sqrt[q^{d_\infty}-1]{-\pi}))$ such that:
$$\tau (\omega_i)=f_i\omega_i.$$
There also exists $h\in B\setminus\{0\}$ such that:
$$\forall x\in \mathbb W'_s, \quad
\frac{\mathcal L_s(x) \prod_{k=1}^s \omega_i}{\widetilde{\pi}}\in h \mathbb K_s.$$
Furthermore, if $\phi $ is standard, then $h\in \mathbb F_\infty^\times.$
\end{theorem}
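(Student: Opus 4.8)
The plan is to combine the lattice statement of Proposition \ref{PropositionS3-2} with a class-number/regulator argument in the style of Taelman's class module formula, transported to the multivariable twisted setting. First I would construct the special functions $\omega_i$: apply Proposition \ref{PropositionS2-1} to each shtuka function $f_i = \rho_i(f)$ to obtain non-zero $U_i \in \mathbb T_s$ with $\tau(U_i) = f_i U_i$; since $s \equiv 1 \pmod{q^{d_\infty}-1}$ we have $q^{n(\phi)}(s-1) \equiv 0 \pmod{q^{d_\infty}-1}$, so the product $\prod_i U_i$ already lies in $\mathcal A_s[\mathbb F_\infty]((\sqrt[q^{d_\infty}-1]{-\pi}))$ (the fractional powers of $-\pi$ cancel), and by rescaling each $U_i$ by an element of $\rho_i(K)^\times$ — using Remark \ref{RemakS2-1} and the relation $\sigma_I(f) = f\,\tau(u_I)/u_I$ — I can arrange $\omega_i \in \mathbb T_s$ with $\tau(\omega_i) = f_i\omega_i$. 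The product $\prod_i \omega_i$ is then a special function for $\varphi_s$, i.e. an $\mathbb H_{s,\infty}$-analogue of the object $U$ appearing in Proposition \ref{PropositionS3-1}, and one checks $\mathrm{Ker}\,\exp_{\varphi_s}\mid_{L_s} = \widetilde\pi \prod_i\omega_i^{-1}\cdot k_s[\mathbb F_\infty]$-type description by comparing with $\mathrm{Ker}\exp_\phi\mid_L = \widetilde\pi A[\rho(A)]$ via the isomorphism $\kappa$ of the excerpt.

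Next I would set up the index computation. By Proposition \ref{PropositionS3-2}, $\mathbb U_s = \{x : \exp_{\varphi_s}(x) \in \mathbb W'_s\}$, $\mathrm{Ker}\exp_{\varphi_s}$ and $\mathcal L_s\mathbb W'_s$ are all $\mathbb A_s$-lattices in $\mathbb H'_{s,\infty}$, and there is a short exact sequence
\begin{equation*}
0 \to \frac{\mathrm{Ker}\exp_{\varphi_s}}{\mathcal L_s\mathbb W'_s \cap \mathrm{Ker}\exp_{\varphi_s}} \to \frac{\mathbb U_s}{\mathcal L_s\mathbb W'_s} \to \frac{\mathbb U_s}{\mathrm{Ker}\exp_{\varphi_s} + \mathcal L_s\mathbb W'_s} \to 0,
\end{equation*}
together with the Taelman-type identification of $\mathbb U_s/(\mathrm{Ker}\exp_{\varphi_s} + \mathbb W'_s)$ with $\mathbb W'_s/\exp_{\varphi_s}(\mathbb W'_s)$ — a finite-dimensional $k_s$-vector space whose ``size'' (Fitting ideal over $\mathbb A_s$) is the analogue of a class module. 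The strategy is to compute a generator of the $\mathbb A_s$-Fitting ideal (equivalently, a ``determinant'') of $\mathbb U_s/\mathbb W'_s$ in two ways: once through $\mathcal L_s\mathbb W'_s$, which produces the factor $\mathcal L_s$ (more precisely $\mathrm{res}(\mathcal L_s) \in \mathbb H'_{s,\infty}[G']^\times$ and its ``norm''/value on $\mathbb W'_s$), and once through $\mathrm{Ker}\exp_{\varphi_s} = \widetilde\pi\prod_i\omega_i^{-1}\cdot(\text{an }\mathbb A_s\text{-lattice})$, which produces the factor $\widetilde\pi/\prod_i\omega_i$. Equating the two expressions modulo $\mathbb K_s^\times$ (respectively $\mathbb F_\infty^\times$ in the standard case) forces $\mathcal L_s(x)\prod_i\omega_i/\widetilde\pi \in h\mathbb K_s$ for a single $h \in B\setminus\{0\}$ independent of $x \in \mathbb W'_s$, the element $h$ measuring the discrepancy between the lattice $\mathbb W'_s$ and the free-of-rank-one structure; when $\phi$ is standard, $B = B'[u]$ degenerates appropriately (as in the Remark following Theorem \ref{TheoremS3-1}) and one gets $h \in \mathbb F_\infty^\times$.

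Concretely, I would pick an $\mathbb A_s$-basis adapted to $\mathbb W'_s$ (using $\mathbb W'_s = \oplus_{i_1,\dots,i_s} B' u^{-\sum(q^{i_k}-1)/(q-1)}\prod_k f_k\cdots f_k^{(i_k-1)}$ from the Remark) and compute the matrix of $\exp_{\varphi_s}$ restricted to a large finite-level truncation $W'_{s,\leq N}$, extract its ``Anderson $\exp$–$L$'' determinant as $N \to \infty$, and recognize the leading term as $\prod_i\omega_i^{-1}\widetilde\pi$ times a unit, exactly as in the one-variable Pellarin identity $\mathcal L\omega/\widetilde\pi = 1/f$. The role of Theorem \ref{TheoremS3-1} (the log-algebraicity statement $\exp_{\varphi_s}(\mathcal L_sW'_s)\subset W'_s$, i.e. \cite{ANDTR1}, Corollary 4.10) is precisely to guarantee that $\mathcal L_s$ maps $\mathbb W'_s$ into the $\exp_{\varphi_s}$-preimage lattice, so that the index-comparison is between two genuine $\mathbb A_s$-lattices and the resulting identity is an honest element of $\mathbb K_s$ rather than of its completion.

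The hard part will be two-fold: first, pinning down $\mathrm{Ker}\exp_{\varphi_s}$ precisely as an $\mathbb A_s$-lattice (not just up to finite index) in terms of $\widetilde\pi$ and the $\omega_i$ — this needs the descent through ${\rm Gal}(H/H_A)$-invariants and careful bookkeeping of which roots of $-\pi$ survive, using the congruence $s \equiv 1 \pmod{q^{d_\infty}-1}$ rather than merely $\pmod{q-1}$; and second, identifying the single element $h \in B\setminus\{0\}$ (as opposed to an a priori $x$-dependent or $\mathbb H'_{s,\infty}$-valued quantity) — this is where the rank-one-but-not-necessarily-free nature of $M$ in Remark \ref{RemakS2-1} and of $\mathbb W'_s$ as a $\mathbb B'_s$-ideal enters, and the standard case is cleaner only because that ideal becomes principal with unit generator. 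I expect the second point to be the genuine obstacle; controlling it amounts to showing the ``regulator'' and the ``special $L$-value'' lattices differ by a principal fractional ideal of $\mathbb B'_s$ whose generator descends to $B$.
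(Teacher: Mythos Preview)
Your ingredients are right, but you are planning a much harder argument than the paper actually needs, and in doing so you have buried the one step that does all the work. The paper's proof is essentially three lines: Proposition~\ref{PropositionS2-1} gives the $\omega_i$; Lemmas~\ref{LemmaS2-2} and~\ref{LemmaS2-3} (as in the proof of Proposition~\ref{PropositionS3-1}) identify ${\rm Ker}\exp_{\varphi_s}\mid_{L_s}$ as $h\widetilde\pi\, I\,\mathbb A_s/\prod_k\omega_k$ for some $h\in H^\times$ and some $I\in\mathcal I(A)$; and then the \emph{last sentence} of Proposition~\ref{PropositionS3-2} --- the existence of $a\in\mathbb A_s\setminus\{0\}$ with $a\mathcal L_s\mathbb W'_s\subset {\rm Ker}\exp_{\varphi_s}$ --- finishes it, since dividing by $a\in\mathbb K_s$ gives $\mathcal L_s(x)\prod_k\omega_k/\widetilde\pi\in h I\,\mathbb K_s = h\,\mathbb K_s$ for every $x\in\mathbb W'_s$. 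No Fitting ideals, no short exact sequences, no truncated determinant computation.

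Your proposal invokes Proposition~\ref{PropositionS3-2} only for the lattice statements and then ignores its final conclusion, instead attempting to recover that inclusion (or something finer) via a Taelman-style index comparison. That would work, but it is rederiving what you already have. The two ``hard parts'' you flag largely dissolve once you use the proposition as a black box: you do not need the kernel \emph{precisely}, only up to $h\widetilde\pi\, I\,\mathbb A_s$ with $h\in H^\times$ and $I$ a fractional ideal (both absorbed into $h\mathbb K_s$); and $h$ is automatically uniform in $x$ because it comes from the kernel, not from $x$. The standard case is then simply the observation that when $\Lambda(\phi)=\widetilde\pi A$ is free, the factor $h$ descends to $\mathbb F_\infty^\times$ --- no principal-ideal ``regulator'' analysis is required.
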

\begin{proof} By Proposition \ref{PropositionS2-1}, we have:
$$f_1, \ldots, f_s\in \mathbb T_s^\times.$$
By the same proposition, there exist $\omega_1, \ldots, \omega_s \in \mathbb T_s\setminus\{0\}$ such that:
$$\tau (\omega_i)=f_i\omega_i.$$
We deduce,  by Lemma \ref{LemmaS2-2} and Lemma \ref{LemmaS2-3}, that:
$${\rm Ker} \exp_{\varphi_s}\mid_L= \frac{h\widetilde{\pi} I\mathbb A_s}{\prod_{k=1}^s \omega_i},$$
where $I$ is some factional ideal of $A,$ $h\in H^\times.$ Let $x\in \mathbb W'_s,$ by Proposition \ref{PropositionS3-2}, we get:
$$\frac{\mathcal L_s(x) \prod_{k=1}^s \omega_i}{\widetilde{\pi}}\in h \mathbb K_s.$$
\end{proof}

We end this section with an application of the above Theorem. Let $\phi\in \Drin$ such that $\phi$ is standard, i.e.
$${\rm Ker} \exp_\phi =\widetilde{\pi} A.$$
Let $f\in \Sht$ be the shtuka function associated to $\phi.$
 \begin{theorem}\label{TheoremS3-2}
 Let $n\geq 1,$ $n\equiv 0\pmod{q^{d_\infty}-1}.$ Then, there exists $b \in B'\setminus\{0\}$ such that we have the following property in $\mathbb C_\infty:$
 $$\frac{\sum_I\frac{\sigma_I(b)}{\psi_{\phi} (I)^{n}}}{\widetilde{\pi}^{n}}\in H_A^\times.$$
 \end{theorem}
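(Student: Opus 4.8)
The plan is to deduce Theorem \ref{TheoremS3-2} from the rationality Theorem \ref{TheoremS3-1} by specializing the variables $t_1, \ldots, t_s$ (with $s = n$) to $\xi$, i.e. applying the evaluation at the point $\xi^{(0)} = \xi$ in each of the $n$ copies of the curve, after exhibiting a suitable element $x$ of $\mathbb W'_s$ whose image under $\mathcal L_s$ and subsequent evaluation produces the Goss-type sum $\sum_I \frac{\sigma_I(b)}{\psi_\phi(I)^n}$. The overarching strategy is exactly the one Pellarin used in the classical case: a several-variable rationality statement, once one specializes all variables at $\theta$ (here $\xi$), collapses to a statement about a single $L$-value, and the factor $\prod_k \omega_i$ evaluated at $\xi$ contributes a power of $\widetilde\pi$ by Proposition \ref{PropositionS2-2}.

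First I would fix $s = n$, which is licit since $n \equiv 0 \pmod{q^{d_\infty}-1}$ forces $s = n \equiv 1 \pmod{q^{d_\infty}-1}$ only if... — actually here one must be careful: the hypothesis of Theorem \ref{TheoremS3-1} is $s \equiv 1 \pmod{q^{d_\infty}-1}$, whereas Theorem \ref{TheoremS3-2} has $n \equiv 0$. So the correct move is to take $s = n+1$ (then $s \equiv 1 \pmod{q^{d_\infty}-1}$) and to specialize $s-1 = n$ of the variables at $\xi$ while keeping the last one generic, OR to observe that since $\phi$ is standard $\widetilde\pi \in K_\infty$ is genuinely defined over $K$ and the $\omega_i$ relation is multiplicative; in either case the key identity to extract is, after evaluation,
$$
\frac{\mathcal L_n(x)\big|_{\xi,\ldots,\xi}}{\widetilde\pi^{\,n}} \cdot \big(\text{unit factors}\big) \in H_A^\times.
$$
Then I would compute $\mathcal L_s(x)\big|_{\xi, \ldots, \xi}$ for the natural choice $x = $ (a $B'$-multiple of) $\prod_{k=1}^s u_{I_0}$-type element; using $\rho_k(u_I)|_\xi = \psi_\phi(I)$ from Lemma \ref{LemmaS2-1} together with the defining shape $\mathcal L_s = \sum_I \frac{\prod_k \rho_k(u_I)}{\psi_\phi(I)}\sigma_I$, the evaluation of $\mathcal L_s(x)$ at $\xi$ in every variable turns $\prod_k \rho_k(u_I)|_\xi = \psi_\phi(I)^s$ into the numerator $\psi_\phi(I)^{s-1} = \psi_\phi(I)^n$ after cancelling the $\psi_\phi(I)$ in the denominator, producing exactly $\sum_I \frac{\sigma_I(b)}{\psi_\phi(I)^n}$ for the appropriate $b$.

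Next I would handle the transcendental factor: by Theorem \ref{TheoremS3-1} the quantity $\frac{\mathcal L_s(x)\prod_k \omega_i}{\widetilde\pi}$ lies in $h\mathbb K_s$ with $h \in \mathbb F_\infty^\times$ since $\phi$ is standard; and by Proposition \ref{PropositionS2-2}, evaluating $f^{(i)} U'|_{\xi^{(i)}} \in \widetilde\pi^{q^i} H$ — applied here at $i=0$ in each variable — shows that each $\omega_i|_\xi$ contributes, up to an element of $H$, a factor $\widetilde\pi$ (more precisely $f\omega|_\xi \in \widetilde\pi H$, so $\omega|_\xi \in \widetilde\pi H$ since $f|_\xi$ is a nonzero element of $H$ — this uses that $\xi$ is not in the support of the divisor of $f$ except with multiplicity one, giving $f|_{\xi}=0$; the right normalization is $f^{(i)}U'|_{\xi^{(i)}}$, and one divides appropriately). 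Collecting, $\frac{\prod_k \omega_i|_\xi}{\widetilde\pi^{\,s}} \in H^\times$ (or $H_A^\times$ after using that $s \equiv 1 \pmod{q^{d_\infty}-1}$ makes everything $\mathrm{Gal}(H/H_A)$-invariant, per the Remark following Proposition \ref{PropositionS3-1}), so that combining with $\frac{\mathcal L_s(x)\prod_k \omega_i}{\widetilde\pi} \in \mathbb F_\infty^\times \mathbb K_s$ and specializing at $\xi$ yields
$$
\frac{\mathcal L_s(x)|_\xi}{\widetilde\pi^{\,s-1}} = \frac{\mathcal L_s(x)|_\xi \cdot \prod_k\omega_i|_\xi}{\widetilde\pi} \cdot \frac{\widetilde\pi^{\,s}}{\widetilde\pi^{\,s}\prod_k\omega_i|_\xi} \cdot \widetilde\pi \in H_A^\times,
$$
which is the asserted statement with $n = s-1$ and a suitable $b \in B' \setminus \{0\}$.

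The main obstacle, and the step requiring genuine care rather than bookkeeping, is controlling the evaluation map at $\xi$ on the lattice $\mathbb W'_s$ and on $\mathcal L_s(x)$ simultaneously: one must check that the specialization $t_1 = \cdots = t_s = \theta$ (i.e. the ring map $\mathbb A_s \to A$ corresponding to $\xi$ in each factor, extended to the relevant Tate-type algebra $\mathbb T_s$) is well-defined on all the infinite series involved — in particular that $\prod_k \omega_i$ can be evaluated at $\xi$, which is where Thakur's Lemma 1.1 and the entirety of the machinery in the proof of Proposition \ref{PropositionS2-1} ($f \in \mathbb T^\times$, the product formula for $U$) get used — and that the resulting $b$ genuinely lies in $B' \setminus\{0\}$ rather than merely in $H_A$. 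A secondary subtlety is verifying that the $\mathrm{Gal}(H/H_A)$-invariance packaged into the superscript in $W'_s$ survives specialization so that the output lands in $H_A^\times$ and not merely in $H^\times$; this is exactly guaranteed by $n \equiv 0 \pmod{q^{d_\infty}-1}$, i.e. $s \equiv 1 \pmod{q^{d_\infty}-1}$, via the Remark after Proposition \ref{PropositionS3-1}.
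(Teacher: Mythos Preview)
Your proposal contains a genuine error in the central computation. When you specialize $\mathcal L_s$ at $\xi$ in every variable, you correctly note that $\rho_k(u_I)\mid_{\xi}=\psi_\phi(I)$, so that
\[
\mathcal L_s(x)\big|_{\xi,\ldots,\xi}=\sum_I \frac{\psi_\phi(I)^s}{\psi_\phi(I)}\,\sigma_I(x|_\xi)=\sum_I \psi_\phi(I)^{s-1}\,\sigma_I(x|_\xi).
\]
But this has $\psi_\phi(I)$ raised to the \emph{positive} power $s-1$; it is not $\sum_I \sigma_I(b)/\psi_\phi(I)^n$. Your sentence ``producing exactly $\sum_I \frac{\sigma_I(b)}{\psi_\phi(I)^n}$'' does not follow from the line before it, and no choice of $x\in\mathbb W'_s$ repairs this sign. (The parallel issue shows up on the transcendental side: your bookkeeping yields $\widetilde\pi^{\,s-1}$ in the denominator, not $\widetilde\pi^{\,n}$.) This is also why you could not settle the congruence: you were trying to make $s-1=n$ and $s\equiv 1\pmod{q^{d_\infty}-1}$ simultaneously, which forces $n\equiv 0$, but the exponent still comes out with the wrong sign.

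The missing idea is a Frobenius twist before specialization. The paper writes $n=q^k-s$ with $k\equiv 0\pmod{d_\infty}$ and $s\equiv 1\pmod{q^{d_\infty}-1}$, applies $\tau^k$ to the rationality identity of Theorem~\ref{TheoremS3-1} (so $\psi_\phi(I)$ in the denominator becomes $\psi_\phi(I)^{q^k}$ and $\widetilde\pi$ becomes $\widetilde\pi^{\,q^k}$, while $\omega_j$ becomes $f_j\cdots f_j^{(k-1)}\omega_j$), and \emph{then} evaluates at $\xi$ in each variable. After specialization the exponent is $s-q^k=-n$, as desired. The same $\tau^k$ introduces an extra factor $\prod_j\frac{\tau^k(\rho_j(u_I))}{\rho_j(u_I)}$, whose value at $\xi$ is a $1$-cocycle $\chi_k:\mathcal I(A)\to H_A^\times$; Hilbert~90--type reasoning gives $\chi_k(I)=\sigma_I(b)/b$ for some $b\in B'\setminus\{0\}$, and this is precisely where the element $b$ in the statement comes from. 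Your outline never produces such a $b$ because you never apply $\tau^k$; the specialization alone only sees $u_I\mid_\xi=\psi_\phi(I)$ and gives no nontrivial Galois cocycle.
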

 
\begin{proof}

Write $n=q^k-s,$ $k\equiv 0\pmod{d_\infty}, s\equiv 1\pmod{q^{d_\infty}-1}.$

\medskip

\noindent 1) Observe that the map $u_.$ extends naturally into a map $u_.: \mathcal I(A)\rightarrow \mathbb H^\times,$ such that:
$$\forall x\in K^\times, \quad u_{xA}= \frac{\rho(x)}{\sgn(x)},$$
$$\forall I, J\in \mathcal I(A), \quad u_{IJ}= \sigma_I(u_J) u_I.$$
By Lemma \ref{LemmaS2-1},  we deduce that for all $l\geq 0,$ $\frac{\tau^l(u_I)}{u_I}$ has no zero and no pole at $\xi.$
 For $m\geq 1,$ $m\equiv 0\pmod{d_\infty},$  let $\chi_m: \mathcal I_A\rightarrow H_A^\times,$ such that:
 $$\forall I\in \mathcal I(A), \quad \chi_m(I)= \frac{\tau^m (u_I)}{u_I}\mid_{\xi}.$$
We observe that:
 $$\forall x\in K^\times, \quad \chi_m(xA)=1,$$
 $$ \forall I,J\in \mathcal I (A), \quad \chi_m (IJ)=\sigma_I(\chi_m(J))\chi_m(I).$$
 In particular, there exists $b_m\in B'\setminus\{0\}$ such that:
 $$\forall I\in \mathcal I(A), \quad \chi_m(I)=\frac{\sigma_I(b_m)}{b_m}.$$
 
\medskip

\noindent 2) By Theorem \ref{TheoremS3-1}, we have:
 $$\frac{\mathcal L_s(1) \prod_{j=1}^s \omega_j}{\widetilde{\pi}}\in \mathbb K_s.$$
We now apply $\tau^k$ to the above rationality result. We  get:
 $$\frac{\prod_{j=1}^s(f_j\cdots f_j^{(k-1)}\omega_j)\,  \tau^k(\mathcal L_s(1))}{\widetilde{\pi}^{q^k}}\in \mathbb K_s.$$
Let $j\in \{1, \ldots, s\}.$  Let $\mathbb H_{s,j}=H(\rho_k(K), k=1, \ldots, s, k\not =j).$  Let $\xi_j$ be the place of $\mathbb H_s/\mathbb H_{s,j}$ which  corresponds to the kernel of the homomorphism of $\mathbb H_{s,j}$-algebras: $\rho_j(A)[\mathbb H_{s,j}]\rightarrow \mathbb H_{s, j}, \rho_j(a)\mapsto a.$ By Proposition  \ref{PropositionS2-2}, there exists $x_j\in K(\rho_j(K))^{\times}$ such that  we have (recall that $e_1(\phi)\not =0$) :
 $$x_jf_j\cdots f_j^{(k-1)}\omega_j \mid_{\xi_j} \in \widetilde{\pi}H_A^\times.$$
Now:
 $$\tau^k(\mathcal L_s(1))=\sum_I \frac{\prod_{j=1}^s \rho_j(u_I)}{\psi_{\phi}(I)^{q^k}} \prod_{j=1}^s\frac{\tau^k(\rho_j(u_I))}{\rho_j(u_I)} .$$
Therefore, there exists $b\in B'\setminus\{0\}$ such that:
 $$\tau^k(\mathcal L_s(1))\mid_{\xi_1, \ldots, \xi_s}= \frac{1}{b} \prod_P(1-\frac{1}{\psi_{\phi}(P)^{q^k-s}}(P, H/K))^{-1}(b) \in K_\infty^\times.$$
The Theorem follows.
\end{proof}


\begin{thebibliography}{23}

\bibitem{AND2} G. Anderson, Log-Algebraicity of Twisted $A$-Harmonic Series and Special Values of $L$-series in Characteristic $p$, {\it Journal of Number Theory} {\bf 60} (1996), 165-209.
 \bibitem{AND} G. Anderson, Rank one elliptic $A$-modules and $A$-harmonic series, {\it Duke Mathematical Journal}
 {\bf 73} (1994), 491-542.
  \bibitem{AND&THA} G. Anderson, D. Thakur, Tensor Powers of the Carlitz Module and Zeta Values, {\it Annals of Mathematics} {\bf 132}(1990), 159-191.
 \bibitem{ANDTR1} B. Angl\`es, T. Ngo Dac, F. Tavares Ribeiro, Stark units in positive characteristic, arXiv: 1606.05502 (2016).
\bibitem{ANDTR2} B. Angl\`es, T. Ngo Dac, F. Tavares Ribeiro, Twisted Characteristic $p$ Zeta Functions,  {\it Journal of Number Theory} {\bf 168} (2016), 180-214.
\bibitem{ANDTR3} B. Angl\`es, T. Ngo Dac, F. Tavares Ribeiro, Exceptional Zeros of $L$-series and Bernoulli-Carlitz Numbers (with an appendix by B. Angl\`es, D. Goss, F. Pellarin, F. Tavares Ribeiro), arXiv: 1511.06209 (2015) .
\bibitem{ANG&PEL} B. Angl\`es, F. Pellarin, Universal Gauss-Thakur sums and $L$-series, {\it Inventiones mathematicae} {\bf 200} (2015), 653-669.
\bibitem{APT} B. Angl\`es, F. Pellarin, F. Tavares Ribeiro, with an appendix by F. Demeslay,  Arithmetic of positive characteristic $L$-series values in Tate algebras,  {\it Compositio Mathematica} {\bf 152} (2016), 1-61.
\bibitem{APTR} B. Angl\`es, F. Pellarin, F. Tavares Ribeiro, Anderson-Stark Units for $\mathbb F_q[\theta],$ to appear in {\it Transactions of the American Mathematical Society}, arXiv: 1501.06804 (2015).
\bibitem{AT} B. Angl\`es, L. Taelman, with an appendix by V. Bosser, Arithmetic of characteristic $p$ special $L$-values, {\it Proceedings of the London Mathematical Society} {\bf 110} (2015), 1000-1032.
\bibitem{ATR} B. Angl\`es, F. Tavares Ribeiro, Arithmetic of function fields units, to appear in {\it Mathematische Annalen}, DOI 10.1007/s00208-016-1405-2.
\bibitem{DEM} F. Demeslay, A class formula for $L$-series in positive characteristic, arXiv:1412.3704 (2014).
\bibitem{FAN0} J. Fang, Equivariant trace formula mod $p,$ {\it Comptes Rendus Math\'ematique} {\bf 354} (2016), 335-338.
\bibitem{FAN1} J. Fang, Special $L$-values of abelian $t$-modules, {\it Journal of Number Theory} {\bf 147} (2015), 300-325.
\bibitem{FAN2} J. Fang, Equivariant Special $L$-values of abelian $t$-modules, arXiv: 1503.07243 (2015).
\bibitem{GOS} D. Goss, Basic Structures of Function Field Arithmetic, Springer, 1996.
\bibitem{GRE&PAP} N. Green, M. Papanikolas, Special $L$-values and shtuka functions for Drinfeld modules on elliptic curves,  arXiv:1607.04211 (2016).
\bibitem{PAP} M. Papanikolas, Log-Algebraicity on Tensor Powers of the Carlitz Module and Special Values of Goss $L$-Functions, work in progress.
\bibitem{PEL} F. Pellarin, Values of certain $L$-series in positive characteristicc, {\it Annals of Mathematics} {\bf 176} (2012), 2055-2093.
\bibitem{TAE1} L. Taelman, A Dirichlet unit theorem for Drinfeld modules, {\it Mathematische Annalen} {\bf 348} (2010), 899-907.
\bibitem{TAE2} L. Taelman, Special $L$-values of Drinfeld modules, {\it Annals of Mathematics} {\bf 175} (2012), 369-391.
\bibitem{THA2} D. Thakur, Shtukas and Jacobi sums, {\it Inventiones mathematicae} {\bf 111} (1993), 557-570.
\bibitem{THA3} D. Thakur, Gauss sums for $\mathbb F_q[T],$ {\it Inventiones mathematicae} {\bf 94}(1988), 105-112.

\end{thebibliography}
 \end{document}